\def\imagebox#1#2{\vtop to #1{\vfill\hbox{#2}\vfill}}
\newcommand{\eqbydef}{\mathrel{\mathop:}=}
\DeclareMathOperator{\card}{card}
\newcommand{\SCAL}{{\cdot}}
\newcommand{\GRAD}{\vec{\nabla}}
\newcommand{\DIV}{\vec{\nabla}{\cdot}}
\newcommand{\LAPL}{{\triangle}}
\newcommand{\GRADh}{\GRAD_h}
\newcommand{\st}{\; | \;}
\newcommand{\closure}[1]{\overline{#1}}
\newcommand{\restrto}[2]{#1{}_{|#2}}
\newcommand{\norm}[2][]{\|#2\|_{#1}}
\newcommand{\seminorm}[2][]{|#2|_{#1}}
\newcommand{\meas}[2][d]{|#2|_{#1}}
\newcommand{\term}{\mathfrak{T}}
\newtheorem{assumption}{Assumption}
\newcommand{\Real}{\mathbb{R}}
\newcommand{\Natural}{\mathbb{N}}
\newcommand{\Poly}[1]{\mathbb{P}^{#1}}
\newcommand{\Mh}[1][h]{\mathcal{M}_{#1}}
\newcommand{\Th}[1][h]{\mathcal{T}_{#1}}
\newcommand{\Fh}[1][h]{\mathcal{F}_{#1}}
\newcommand{\Fhi}{\Fh^{{\rm i}}}
\newcommand{\Fhb}{\Fh^{{\rm b}}}
\newcommand{\fTh}[1][h]{\mathfrak{T}_{#1}}
\newcommand{\fFh}[1][h]{\mathfrak{F}_{#1}}
\newcommand{\Fhh}[2]{\Fh[#1,#2]} 
\newcommand{\Thh}[2]{\Th[#1,#2]} 
\newcommand{\normal}{\vec{n}}
\newcommand{\lproj}[2][h]{\pi_{#1}^{0,#2}}
\newcommand{\eproj}[2][T]{\pi_{#1}^{1,#2}}
\newcommand{\vlproj}[2][h]{\vec{\pi}_{#1}^{0,#2}}
\newcommand{\Osw}[1][k+1]{\mathcal{I}_{h}^{#1}}
\newcommand{\argmin}{\operatornamewithlimits{arg\,min}}
\newcommand{\UT}[1][k]{\underline{U}_T^{#1}}
\newcommand{\Uh}[1][k]{\underline{U}_h^{#1}}
\newcommand{\UhD}[1][k]{\underline{U}_{h,0}^{#1}}
\newcommand{\IT}[1][k]{\underline{I}_T^{#1}}
\newcommand{\Ih}[1][k]{\underline{I}_h^{#1}}
\newcommand{\DT}[1][k]{\underline{U}_{\partial T}^{#1}}
\newcommand{\DpT}[1][k]{\underline{\Delta}_{\partial T}^k}
\newcommand{\uu}[1][T]{\underline{u}_{#1}}
\newcommand{\uhu}[1][h]{\hat{\underline{u}}_{#1}}
\newcommand{\uv}[1][T]{\underline{v}_{#1}}
\newcommand{\uphi}[1][T]{\underline{\varphi}_{#1}}
\newcommand{\cu}[1][T]{\check{u}_{#1}}
\newcommand{\cv}[1][T]{\check{v}_{#1}}
\newcommand{\cw}[1][T]{\check{w}_{#1}}
\newcommand{\ual}[1][\partial T]{\underline{\alpha}_{#1}}
\newcommand{\RpT}[1][k]{\underline{R}_{\partial T}^{#1}}
\newcommand{\sU}[1][h]{\mathsf{U}_{#1}}
\newcommand{\sV}[1][h]{\mathsf{V}_{#1}}
\newcommand{\sA}[1][h]{\mathsf{A}_{#1}}
\newcommand{\sB}[1][h]{\mathsf{B}_{#1}}
\newcommand{\trans}{^{\rm T}}
\newcommand{\pT}[1][k+1]{p_T^{#1}}
\newcommand{\ph}[1][k+1]{p_h^{#1}}
\newcommand{\GT}[1][k]{\vec{G}_T^{#1}}
\newcommand{\Gh}[1][k]{\vec{G}_h^{#1}}
\newcommand{\jump}[2][F]{[#2]_{#1}}
\newcommand{\est}[2][T]{\eta_{{\rm #2},#1}}
\newcommand{\vel}{\vec{\beta}}
\newcommand{\diff}[1][]{\kappa_{#1}}
\newcommand{\reac}{\mu}
\newcommand{\Pe}{\mathrm{Pe}}
\newcommand{\Gvel}[2][k]{\vec{G}_{\vel,#2}^{#1}}
\newcommand{\tauref}[1][T]{\hat{\tau}_{#1}}
\newcommand{\velref}[1][T]{\hat{\vel}_{#1}}
\newcommand{\uw}[1][T]{\underline{w}_{#1}}
\newcommand{\logLogSlopeTriangle}[5]
{
    \pgfplotsextra
    {
        \pgfkeysgetvalue{/pgfplots/xmin}{\xmin}
        \pgfkeysgetvalue{/pgfplots/xmax}{\xmax}
        \pgfkeysgetvalue{/pgfplots/ymin}{\ymin}
        \pgfkeysgetvalue{/pgfplots/ymax}{\ymax}

        \pgfmathsetmacro{\xArel}{#1}
        \pgfmathsetmacro{\yArel}{#3}
        \pgfmathsetmacro{\xBrel}{#1-#2}
        \pgfmathsetmacro{\yBrel}{\yArel}
        \pgfmathsetmacro{\xCrel}{\xArel}

        \pgfmathsetmacro{\lnxB}{\xmin*(1-(#1-#2))+\xmax*(#1-#2)} 
        \pgfmathsetmacro{\lnxA}{\xmin*(1-#1)+\xmax*#1} 
        \pgfmathsetmacro{\lnyA}{\ymin*(1-#3)+\ymax*#3} 
        \pgfmathsetmacro{\lnyC}{\lnyA+#4*(\lnxA-\lnxB)}
        \pgfmathsetmacro{\yCrel}{\lnyC-\ymin)/(\ymax-\ymin)}

        \coordinate (A) at (rel axis cs:\xArel,\yArel);
        \coordinate (B) at (rel axis cs:\xBrel,\yBrel);
        \coordinate (C) at (rel axis cs:\xCrel,\yCrel);

        \draw[#5]   (A)-- node[pos=0.5,anchor=north] {\scriptsize{1}}
                    (B)-- 
                    (C)-- node[pos=0.,anchor=west] {\scriptsize{#4}} 
                    cycle;
    }
}
\newcommand{\reverseLogLogSlopeTriangle}[5]
{
    \pgfplotsextra
    {
        \pgfkeysgetvalue{/pgfplots/xmin}{\xmin}
        \pgfkeysgetvalue{/pgfplots/xmax}{\xmax}
        \pgfkeysgetvalue{/pgfplots/ymin}{\ymin}
        \pgfkeysgetvalue{/pgfplots/ymax}{\ymax}

        \pgfmathsetmacro{\xArel}{#1}
        \pgfmathsetmacro{\yArel}{#3}
        \pgfmathsetmacro{\xBrel}{#1-#2}
        \pgfmathsetmacro{\yBrel}{\yArel}
        \pgfmathsetmacro{\xCrel}{\xBrel}

        \pgfmathsetmacro{\lnxB}{\xmin*(1-(#1-#2))+\xmax*(#1-#2)} 
        \pgfmathsetmacro{\lnxA}{\xmin*(1-#1)+\xmax*#1} 
        \pgfmathsetmacro{\lnyA}{\ymin*(1-#3)+\ymax*#3} 
        \pgfmathsetmacro{\lnyC}{\lnyA+#4*(\lnxA-\lnxB)}
        \pgfmathsetmacro{\yCrel}{\lnyC-\ymin)/(\ymax-\ymin)}

        \coordinate (A) at (rel axis cs:\xArel,\yArel);
        \coordinate (B) at (rel axis cs:\xBrel,\yBrel);
        \coordinate (C) at (rel axis cs:\xCrel,\yCrel);

        \draw[#5]   (A)-- node[pos=0.5,anchor=north] {\scriptsize{1}}
                    (B)-- 
                    (C) -- node[pos=0.,anchor=east] {\scriptsize{#4}}
                    cycle;
    }
}
\begin{document}

\title*{An introduction to Hybrid High-Order methods}
\author{Daniele A. Di Pietro and Roberta Tittarelli}
\institute{Daniele A. Di Pietro \and Roberta Tittarelli \at Institut Montpelli\'{e}rain Alexander Grothendieck, CNRS, Univ. de Montpellier. \email{daniele.di-pietro@umontpellier.fr}, \email{roberta.tittarelli@umontpellier.fr}}
\maketitle

\abstract{This chapter provides an introduction to Hybrid High-Order (HHO) methods.
These are new generation numerical methods for PDEs with several advantageous features: the support of arbitrary approximation orders on general polyhedral meshes, the reproduction at the discrete level of relevant continuous properties, and a reduced computational cost thanks to static condensation and compact stencil.
After establishing the discrete setting, we introduce the basics of HHO methods using as a model problem the Poisson equation. We describe in detail the construction, and prove a priori convergence results for various norms of the error as well as a posteriori estimates for the energy norm.
We then consider two applications: the discretization of the nonlinear $p$-Laplace equation and of scalar diffusion-advection-reaction problems.
The former application is used to introduce compactness analysis techniques to study the convergence to minimal regularity solution.
The latter is used to introduce the discretization of first-order operators and the weak enforcement of boundary conditions.
Numerical examples accompany the exposition.}


\section{Introduction}\label{sec:introduction}

This chapter provides an introduction to Hybrid High-Order (HHO) methods.
The material is closely inspired by a series of lectures given by the first author at Institut Henri Poincar\'{e} in September 2016 within the thematic quarter \emph{Numerical Methods for PDEs} (see~\url{http://tinyurl.com/IHP-quarter-nmpdes}).

HHO methods, introduced in~\cite{Di-Pietro.Ern.ea:14,Di-Pietro.Ern:15}, are discretization methods for Partial Differential Equations (PDEs) with relevant features that set them apart from classical techniques such as finite elements or finite volumes.
These include, in particular:
\begin{enumerate}[(i)]
\item The support of general polytopal meshes in arbitrary space dimension, paving the way to a seamless treatment of complex geometric features and unified 1d-2d-3d implementations;
\item The possibility to select the approximation order which, possibly combined with adaptivity, leads to a reduction of the simulation cost for a given precision or better precision for a given cost;
\item The compliance with the physics, including robustness with respect to the variations of physical coefficients and reproduction at the discrete level of key continuous properties such as local balances and flux continuity;
\item A reduced computational cost thanks to their compact stencil along with the possibility to perform static condensation.
\end{enumerate}

As of today, HHO methods have been successfully applied to the discretization of several linear and nonlinear problems of engineering interest including:
variable diffusion~\cite{Di-Pietro.Ern.ea:14,Di-Pietro.Ern.ea:16,Di-Pietro.Ern:16},
quasi incompressible linear elasticity~\cite{Di-Pietro.Ern:15,Di-Pietro.Ern:15*1},
locally degenerate diffusion-advection-reaction~\cite{Di-Pietro.Droniou.ea:15},
poroelasticity~\cite{Boffi.Botti.ea:16},
creeping flows~\cite{Aghili.Boyaval.ea:15} possibly driven by volumetric forces with large irrotational part~\cite{Di-Pietro.Ern.ea:16*1},
electrostatics~\cite{Di-Pietro.Specogna:16},
phase separation problems governed by the Cahn--Hilliard equation~\cite{Chave.Di-Pietro.ea:16},
Leray--Lions type elliptic problems~\cite{Di-Pietro.Droniou:16,Di-Pietro.Droniou:17}.
More recent applications also include steady incompressible flows governed by the Navier--Stokes equations~\cite{Di-Pietro.Krell:16} and nonlinear elasticity~\cite{Botti.Di-Pietro.ea:16}.
Generalizations of HHO methods and comparisons with other (new generation or classical) discretization methods for PDEs can be found in~\cite{Cockburn.Di-Pietro.ea:16,Boffi.Di-Pietro:16}.
Implementation tools based on advanced programming techniques have been recently discussed in~\cite{Cicuttin.Di-Pietro.ea:17}.

Discretization methods that support polytopal meshes and, possibly, arbitrary approximation orders have experienced a vigorous development over the last decade.
Novel approaches to the analysis and the design have been developed borrowing ideas from other branches of mathematics (such as topology and geometry), or expanding past their initial limits the original ideas underlying finite element or finite volume methods.
A brief state-of-the-art is provided in what follows.

Several lowest-order methods for diffusive problems have been proposed to circumvent the strict conditions of mesh-data compliance required for the consistency of classical (two-points) finite volume schemes; see~\cite{Droniou:14} for a comprehensive review.
We mention here, in particular, the Mixed and Hybrid Finite Volume methods of~\cite{Droniou.Eymard:06,Eymard.Gallouet.ea:10}.
These methods possess local conservation properties on the primal mesh, and enable an explicit identification of equilibrated numerical fluxes.
Their relation with the lowest-order version of HHO methods has been studied in~\cite[Section~2.5]{Di-Pietro.Ern.ea:14} for pure diffusion and in~\cite[Section~5.4]{Di-Pietro.Droniou.ea:15} for advection-diffusion-reaction.
Other families of lowest-order methods have been obtained by reproducing at the discrete level salient features of the continuous problem.
Mimetic Finite Difference methods are derived by emulating the Stokes theorems to formulate counterparts of differential operators and of $L^2$-products; cf.~\cite{Brezzi.Lipnikov.ea:05} and~\cite{Droniou.Eymard.ea:10} for a study of their relation with Mixed and Hybrid Finite Volume methods.
In the Discrete Geometric Approach of~\cite{Codecasa.Specogna.ea:10} as well as in Compatible Discrete Operators~\cite{Bonelle.Ern:14}, formal links with the continuous operators are expressed in terms of Tonti diagrams.
To different extents, the aforementioned methods owe to the seminal ideas of Whitney on geometric integration~\cite{Whitney:57}.
A different approach to lowest-order schemes on general meshes consists in extending classical properties of nonconforming and penalized finite elements as in the Cell Centered Galerkin~\cite{Di-Pietro:12} and generalized Crouzeix--Raviart~\cite{Di-Pietro.Lemaire:15} methods.
We also cite here~\cite{Vohralik.Wohlmuth:13} concerning the use of classical mixed finite elements on polyhedral meshes (see, in particular, Section 7 therein).
Further investigations have recently lead to unifying frameworks that encompass the above (and other) methods. We mention, in particular, the Gradient Schemes discretizations of~\cite{Droniou.Eymard.ea:13}.
Finally, the methods discussed here can often be regarded as lowest-order versions of more recent technologies.

Methods that support the possibility to increase the approximation order have received a considerable amount of attention over the last few years.
High-order discretizations on general meshes that are possibly physics-compliant can be obtained by the discontinuous Galerkin approach; cf., e.g.,~\cite{Arnold.Brezzi.ea:02,Di-Pietro.Ern:12} and also~\cite{Bassi.Botti.ea:12}.
Discontinuous Galerkin methods, however, have some practical limitations.
For problems in incompressible fluid mechanics, e.g., a key ingredient for inf-sup stability is a reduction map that can play the role of a Fortin interpolator.
Unfortunately, such an interpolator is not available for discontinuous Galerkin methods on non-standard elements.
Additionally, in particular for modal implementations on general meshes, the number of unknowns can become unbearably large.
This has motivated the introduction of Hybridizable Discontinuous Galerkin methods~\cite{Castillo.Cockburn.ea:00,Cockburn.Gopalakrishnan.ea:09}, which mainly focus on standard meshes (the extension to general meshes is possible in some cases); see also the very recent $M$-decomposition techniques~\cite{Cockburn.Fu:17}.
High-order discretization methods that support general meshes also include Virtual Element methods; cf.~\cite{Beirao-da-Veiga.Brezzi.ea:13*1} for an introduction.
In short, Virtual Element methods are finite element methods where explicit expressions for the basis functions are not available at each point, and computable approximations thereof are used instead.
This provides the extra flexibility required, e.g., to handle polyhedral elements.
Links between HHO and the nonconforming Virtual Element method have been pointed out in~\cite[Section~2.4]{Cockburn.Di-Pietro.ea:16}; see also~\cite{Boffi.Di-Pietro:16} concerning HHO and mixed Virtual Element methods.

We next describe in detail the content of this chapter.
We start in Section~\ref{sec:setting} by presenting the discrete setting: we introduce the notion of polytopal mesh (Section~\ref{sec:setting:mesh}), formulate assumptions on the way meshes are refined that are suitable to carry out a $h$-convergence analysis (Section~\ref{sec:setting:regular.mesh}), introduce the local polynomial spaces (Section~\ref{sec:setting:spaces}) and projectors (Section~\ref{sec:setting:projectors}) that lie at the heart of the HHO construction.

In Section~\ref{sec:basics} we present the basic principles of HHO methods using as a model problem the Poisson equation.
While the material in this section is mainly adapted from~\cite{Di-Pietro.Ern.ea:14}, some results are new and the arguments have been shortened or made more elegant.
In Section~\ref{sec:basics:local.construction} we introduce the local space of degrees of freedom (DOFs) and discuss the main ingredients upon which HHO methods rely, namely:
\begin{enumerate}[(i)]
\item Reconstructions of relevant quantities obtained by solving small, embarassingly parallel problems on each element; 
\item High-order stabilization terms obtained by penalizing cleverly designed residuals.
\end{enumerate}
In Section~\ref{sec:basics:discrete.problem} we show how to combine these ingredients to formulate local contributions, which are then assembled element-by-element as in standard finite elements.
The construction is conceived so that only face-based DOFs are globally coupled, which paves the way to efficient practical implementations where element-based DOFs are statically condensed in a preliminary step.
In Sections~\ref{sec:basics:apriori.error.analysis}~and~\ref{sec:basics:aposteriori.error.analysis} we discuss, respectively, optimal a priori estimates for various norms and seminorms of the error, and residual-based a posteriori estimates for the energy-norm of the error. 
Finally, some numerical examples are provided in Section~\ref{sec:basics:num.ex} to demonstrate the theoretical results.

In Section~\ref{sec:plap} we consider the HHO discretization of the $p$-Laplace equation.
The material is inspired by~\cite{Di-Pietro.Droniou:16,Di-Pietro.Droniou:17}, where more general Leray--Lions operators are considered.
When dealing with nonlinear problems, regularity for the exact solution is often difficult to prove and can entail stringent assumptions on the data.
For this reason, the $h$-convergence analysis can be carried out in two steps:
in a first step, convergence to minimal regularity solutions is proved by a compactness argument;
in a second step, convergence rates are estimated for smooth solutions (and smooth data).
Convergence by compactness typically requires discrete counterparts of functional analysis results relevant for the study of the continuous problem.
In our case, two sets of discrete functional analysis results are needed: discrete Sobolev embeddings (Section~\ref{sec:plap:sobolev.embeddings}) and compactness for sequences of HHO functions uniformly bounded in a $W^{1,p}$-like seminorm (Section~\ref{sec:plap:compactness}).
The interest of both results goes beyond the specific method and problem considered here.
As an example, in~\cite{Di-Pietro.Krell:16} they are used for the analysis of a HHO discretization of the steady incompressible Navier--Stokes equations.
The HHO method for the $p$-Laplacian stated in Section~\ref{sec:plap:discrete} is designed according to similar principles as for the Poisson problem.
Convergence results are stated in Section~\ref{sec:plap:convergence}, and numerical examples are provided in Section~\ref{sec:plap:num}.

Following~\cite{Di-Pietro.Droniou.ea:15}, in Section~\ref{sec:adr} we extend the HHO method to diffusion-advection-reaction problems.
In this context, a crucial property from the numerical point of view is robustness in the advection-dominated regime.
In Section~\ref{sec:adr:weak.bc} we modify the diffusive bilinear form introduced in Section~\ref{sec:basics:discrete.problem} to incorporate weakly enforced boundary conditions.
The weak enforcement of boundary conditions typically improves the behaviour of the method in the presence of boundary layers, since the discrete solution is not constrained to a fixed value on the boundary.
In Section~\ref{sec:adr:advection} we introduce the HHO discretization of first-order terms based on two novel ingredients: a local advective derivative reconstruction and an upwind penalty term. The former is used to formulate the consistency terms, while the role of the latter is to confer suitable stability properties to the advective-reactive bilinear form.
The HHO discretization is finally obtained in Section~\ref{sec:adr:global.problem} combining the diffusive and advective-reactive contributions, and its stability with respect to an energy-like norm including an advective derivative contribution is studied.
In Section~\ref{sec:adr:convergence} we state an energy-norm error estimate which accounts for the dependence of the error contribution of each mesh element on a local P\'{e}clet number.
A numerical illustration is provided in Section~\ref{sec:adr:numerical.example}.


\section{Discrete setting}\label{sec:setting}

Let $\Omega\subset\Real^d$, $d\in\Natural^*$, denote a bounded connected open polyhedral domain with Lipschitz boundary and outward normal $\normal$.
We assume that $\Omega$ does not have cracks, i.e., it lies on one side of its boundary.
In what follows, we introduce the notion of polyhedral mesh of $\Omega$, formulate assumptions on the way meshes are refined that enable to prove useful geometric and functional results, and introduce functional spaces and projectors that will be used in the construction and analysis of HHO methods.

\subsection{Polytopal mesh}\label{sec:setting:mesh}

The following definition enables the treatment of meshes as general as the ones depicted in Fig.~\ref{fig:meshes}.
\begin{figure}\centering
  \begin{minipage}{0.24\textwidth}\centering
    \imagebox{3.5cm}{\includegraphics[height=2.5cm]{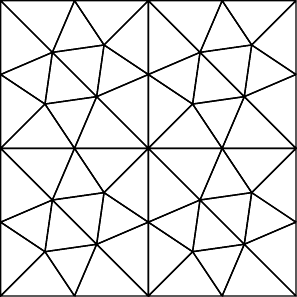}}
    \subcaption{Matching triangular\label{fig:meshes:triangular}}
  \end{minipage}
  \begin{minipage}{0.24\textwidth}\centering
    \imagebox{3.5cm}{\includegraphics[height=2.5cm]{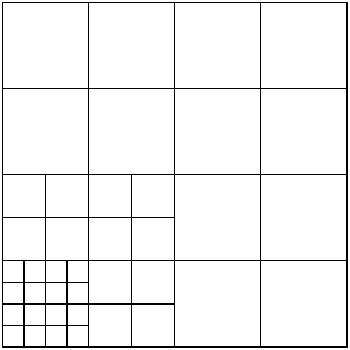}}
    \subcaption{Nonconforming\label{fig:meshes:nonconforming}}
  \end{minipage}
  \begin{minipage}{0.24\textwidth}\centering
    \imagebox{3.5cm}{\includegraphics[height=2.5cm]{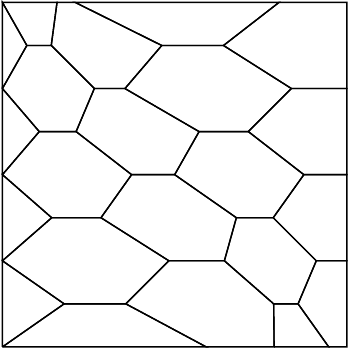}}
    \subcaption{Polygonal\label{fig:meshes:polygonal}}
  \end{minipage}
  \begin{minipage}{0.26\textwidth}\centering  
    \includegraphics[height=3.5cm]{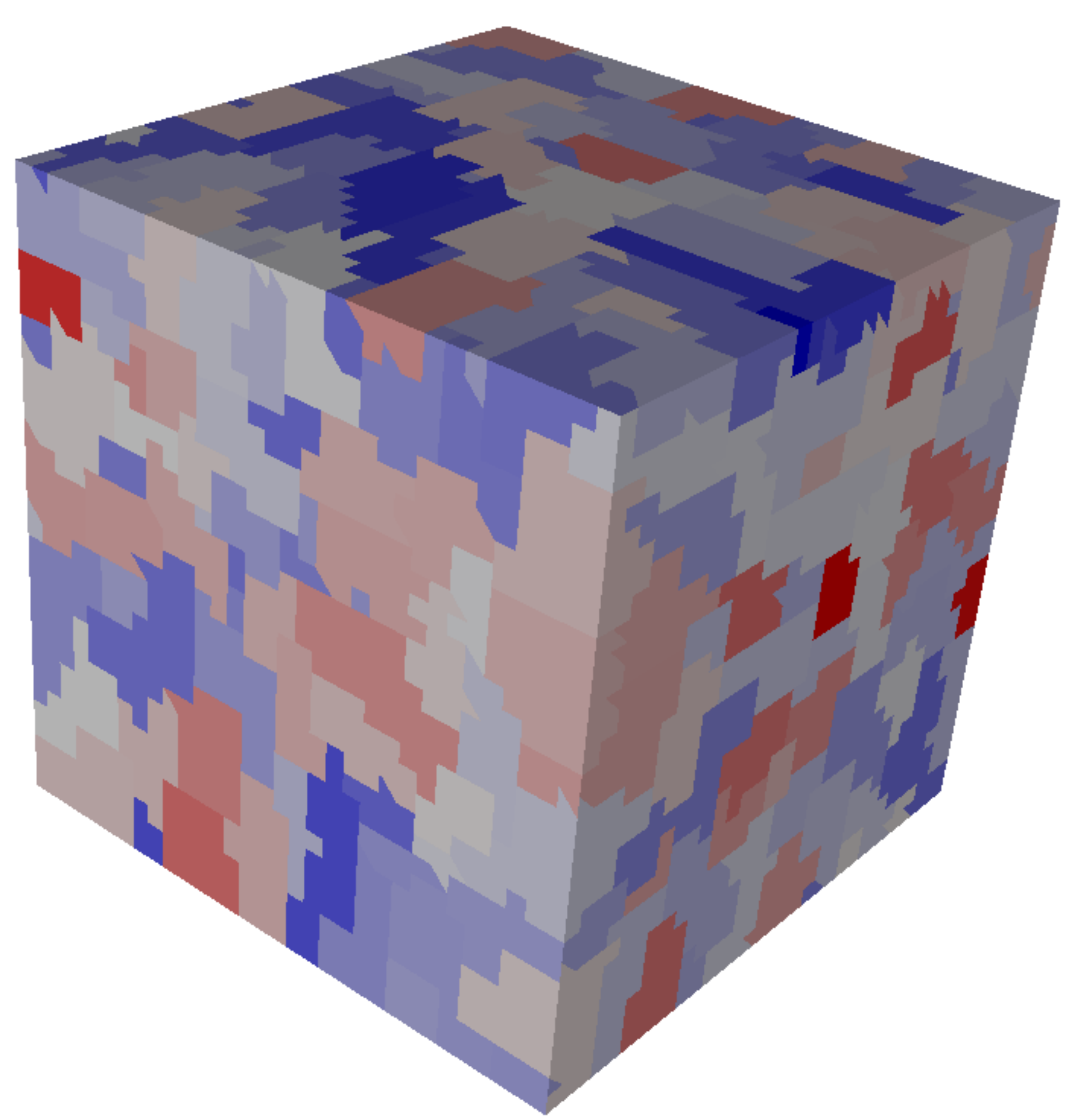}
    \subcaption{Agglomerated\label{fig:meshes:agglomerated}}
  \end{minipage}
  \caption{Examples of polytopal meshes in two and three space dimensions. The triangular and nonconforming meshes are taken from the FVCA5 benchmark~\cite{Herbin.Hubert:08}, the polygonal mesh family from~\cite[Section~4.2.3]{Di-Pietro.Lemaire:15}, and the agglomerated polyhedral mesh from~\cite{Di-Pietro.Specogna:16}.\label{fig:meshes}}
\end{figure}
\begin{definition}[Polytopal mesh]\label{def:mesh}
  A polytopal mesh of $\Omega$ is a couple $\Mh=(\Th,\Fh)$ where:
  
  \begin{inparaenum}[(i)]
  \item The set of \emph{mesh elements} $\Th$ is a finite collection of nonempty disjoint open polytopes $T$ with boundary $\partial T$ and diameter $h_T$
    such that the \emph{meshsize} $h$ satisfies $h=\max_{T\in\Th} h_T$ and it holds that
    $\closure{\Omega}=\bigcup_{T\in\Th}\closure{T}.$

  \item The set of \emph{mesh faces} $\Fh$ is a finite collection of disjoint subsets of $\overline{\Omega}$
  such that, for any $F\in\Fh$, $F$ is an open subset of a hyperplane of $\Real^d$,
  the $(d{-}1)$-dimensional Hausdorff measure
  of $F$ is strictly positive, and the $(d-1)$-dimensional Hausdorff measure
  of its relative interior $\closure{F}\backslash F$ is zero.
  Moreover,
  \begin{inparaenum}[(a)]
  \item for each $F\in \Fh$, either
    there exist two distinct mesh elements $T_1,T_2\in\Th $ such that $F\subset\partial T_1\cap\partial T_2$ and $F$ is called an \emph{interface} or 
    there exists one mesh element $T\in\Th$ such that $F\subset\partial T\cap\partial\Omega$ and $F$ is called a \emph{boundary face};
  \item the set of faces is a partition of the mesh skeleton, i.e.,
    $\bigcup_{T\in\Th}\partial T = \bigcup_{F\in\Fh}\closure{F}.$
  \end{inparaenum}
  \end{inparaenum}
\end{definition}
  
Interfaces are collected in the set $\Fhi$ and boundary faces in $\Fhb$, so that $\Fh=\Fhi\cup\Fhb$.  
For any mesh element $T\in\Th$, $$\Fh[T]\eqbydef\{F\in\Fh\st F\subset\partial T\}$$ denotes the set of faces contained in $\partial T$.
Similarly, for any mesh face $F\in\Fh$, $$\Th[F]\eqbydef\{T\in\Th\st F\subset\partial T\}$$ is the set of mesh elements sharing $F$.
Finally, for all $F\in\Fh[T]$, $\normal_{TF}$ is the unit normal vector to $F$ pointing out of $T$.

\begin{figure}\centering
  \includegraphics[height=3.5cm]{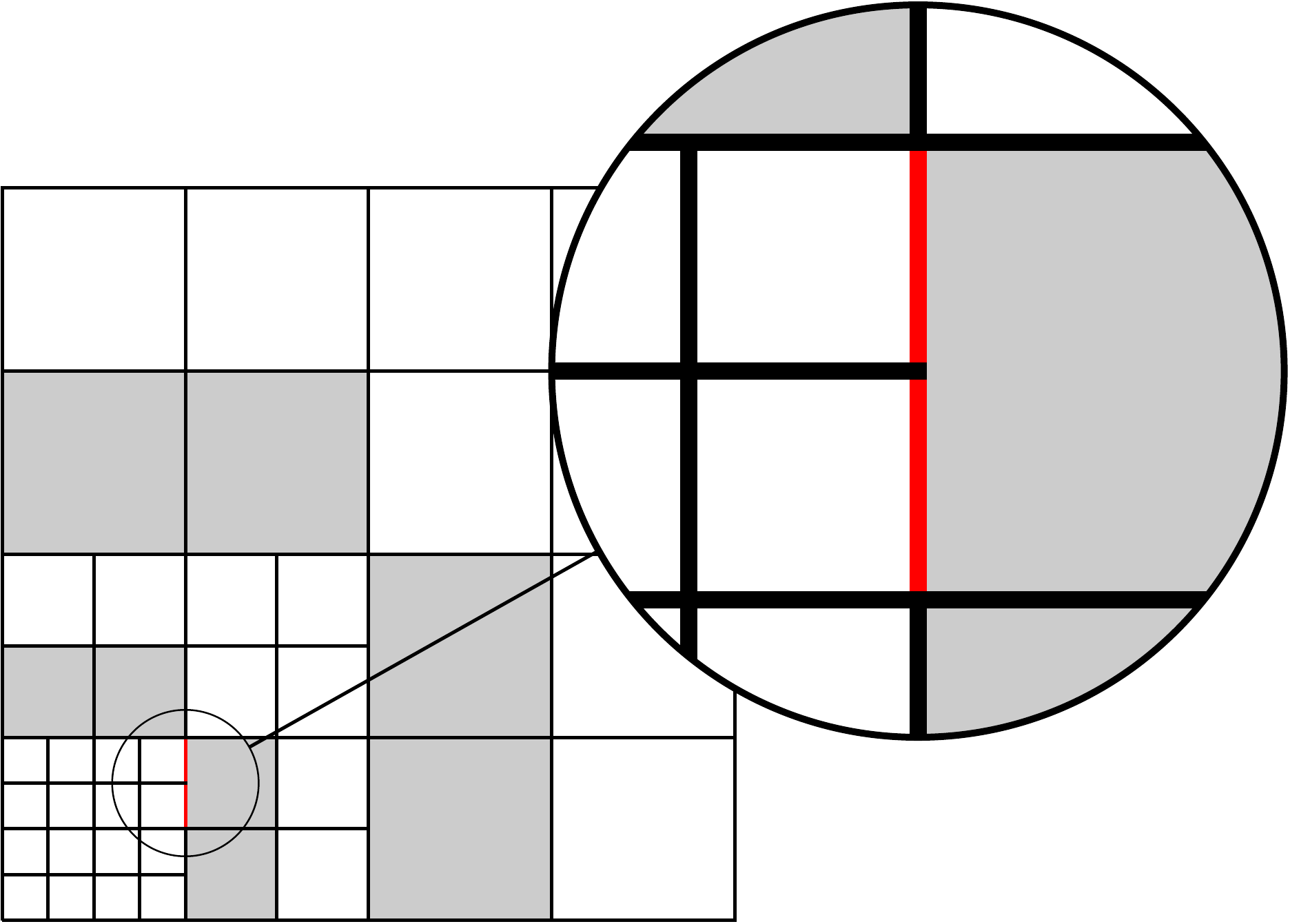}
  \caption{Treatment of a nonconforming junction (red) as multiple coplanar faces. Gray elements are pentagons with two coplanar faces, white elements are squares.\label{fig:nonconforming}}
\end{figure}
\begin{remark}[Nonconforming junctions]
  Meshes including nonconforming junctions such as the one depicted in Fig.~\ref{fig:nonconforming} are naturally supported provided that each face containing hanging nodes is treated as multiple coplanar faces.
\end{remark}

\subsection{Regular mesh sequences}\label{sec:setting:regular.mesh}

When studying the convergence of HHO methods with respect to the meshsize $h$, one needs to make assumptions on how the mesh is refined.
The ones provided here are closely inspired by~\cite[Chapter~1]{Di-Pietro.Ern:12}, and refer to the case of isotropic meshes with non-degenerate faces.
Isotropic means here that we do not consider the case of elements that become more and more stretched when refining.
Non-degenerate faces means, on the other hand, that the diameter of each mesh face is uniformly comparable to that of the element(s) it belongs to; see~\eqref{eq:hT_hF} below.

\begin{definition}[Matching simplicial submesh]\label{def:matching.submesh}
  Let $\Mh=(\Th,\Fh)$ be a polytopal mesh of $\Omega$. We say that $\fTh$ is a matching simplicial submesh of $\Mh$ if
  \begin{inparaenum}[(i)]
  \item $\fTh$ is a matching simplicial mesh of $\Omega$;
  \item for all simplices $\tau\in\fTh$, there is only one mesh element $T\in\Th$ such that $\tau\subset T$;
  \item for all $\sigma\in\fFh$, the set collecting the simplicial faces of $\fTh$, there is at most one face $F\in\Fh$ such that $\sigma\subset F$.
  \end{inparaenum}
\end{definition}

If $\Th$ itself is matching simplicial and $\Fh$ collects the corresponding simplicial faces, we can simply take $\fTh=\Th$, so that $\fFh=\Fh$.
The notion of regularity for refined mesh sequences is made precise by the following
\begin{definition}[Regular mesh sequence]\label{def:mesh.reg}
  Denote by ${\cal H}\subset \Real_*^+ $ a countable set of meshsizes having $0$ as its unique accumulation point.
  A sequence of refined meshes $(\Mh)_{h \in {\cal H}}$ is said to be \emph{regular} if there exists a real number $\varrho\in(0,1)$ such that, for all $h\in{\cal H}$, there exists a matching simplicial submesh $\fTh$ of $\Mh$ and 
  \begin{inparaenum}[(i)]
  \item for all simplices $\tau\in\fTh$ of diameter $h_{\tau}$ and inradius $r_{\tau}$, $\varrho h_{\tau}\le r_{\tau}$;
  \item for all mesh elements $T\in\Th$ and all simplices $\tau\in\fTh$ such that $\tau\subset T$, $\varrho h_T \le h_{\tau}$.
  \end{inparaenum}
\end{definition}
\begin{remark}[Role of the simplicial submesh]
  The simplicial submesh introduced in Definition~\ref{def:mesh.reg} is merely a theoretical tool, and needs not be constructed in practice.
\end{remark}

Geometric bounds on regular mesh sequences can be proved as in~\cite[Section 1.4.2]{Di-Pietro.Ern:12} (the definition of mesh face is slightly different therein since planarity is not required, but the proofs are based on the matching simplicial submesh and one can check that they carry out unchanged).
We recall here, in particular, that the number of faces of one mesh element is uniformly bounded: There is $N_\partial\ge d+1$ such that
\begin{equation}\label{eq:Np}
  \max_{h\in{\cal H}}\max_{T\in\Th}\card(\Fh[T])\le N_\partial.
\end{equation}
Morevover, according to~\cite[Lemma~1.42]{Di-Pietro.Ern:12}, 
for all $h\in{\cal H}$, all $T \in\Th$, and all $F\in \Fh[T]$ 
\begin{equation}\label{eq:hT_hF}
  \varrho^2 h_T \le h_F \le h_T.
\end{equation}
Discrete functional analysis results for arbitrary-order methods on regular mesh sequences can be found in~\cite[Chapter~1]{Di-Pietro.Ern:12} and~\cite{Di-Pietro.Droniou:16,Di-Pietro.Droniou:17}.
We also refer the reader to~\cite{Eymard.Gallouet.ea:00} for a first theorization of discrete functional analysis in the context of lowest-order finite volume methods, as well as to the subsequent extensions of~\cite{Eymard.Gallouet.ea:10,Droniou.Eymard.ea:17}.

Throughout the rest of this work, it is tacitly understood that we work on regular mesh sequences.

\subsection{Local and broken spaces}\label{sec:setting:spaces}

Throughout the rest of this chapter, for any $X\subset\closure{\Omega}$, we denote by $(\cdot,\cdot)_X$ and $\norm[X]{{\cdot}}$ the standard $L^2(X)$-product and norm, with the convention that the subscript is omitted whenever $X=\Omega$.
The same notation is used for the vector-valued space $L^2(X)^d$.

Let now the set $X$ be a mesh element or face.
For an integer $l\ge 0$, we denote by $\Poly{l}(X)$ the space spanned by the restriction to $X$ of scalar-valued, $d$-variate polynomials of total degree $l$.
We note the following trace inequality (see~\cite[Lemma~1.46]{Di-Pietro.Ern:12}):
There is a real number $C>0$ only depending on $d$, $\varrho$, and $l$ such that, for all $h\in{\cal H}$, all $T\in\Th$, all $v\in\Poly{l}(T)$, and all $F\in\Fh[T]$,
\begin{equation}\label{eq:trace.disc}
  \norm[F]{v}\le Ch_T^{-\nicefrac12}\norm[T]{v}.
\end{equation}
At the global level, we define the broken polynomial space
$$
\Poly{l}(\Th)\eqbydef\left\{
v_h\in L^2(\Omega)\st\restrto{v_h}{T}\in\Poly{l}(T)\quad\forall T\in\Th
\right\}.
$$
Functions in $\Poly{l}(\Th)$ belong to the broken Sobolev space
$$
W^{1,1}(\Th)\eqbydef\left\{
v\in L^1(\Omega)\st\restrto{v}{T}\in W^{1,1}(T)\quad\forall T\in\Th
\right\}.
$$
We denote by $\GRADh:W^{1,1}(\Th)\to L^1(\Omega)^d$ the usual broken gradient operator such that, for all $v\in W^{1,1}(\Th)$,
$$
\restrto{(\GRADh v)}{T} = \GRAD\restrto{v}{T}\qquad\forall T\in\Th.
$$

\subsection{Projectors on local polynomial spaces}\label{sec:setting:projectors}
Projectors on local polynomial spaces play a key role in the design and analysis of HHO methods.

\subsubsection{$L^2$-orthogonal projector}
Let $X$ denote a mesh element or face.
The $L^2$-orthogonal projector (in short, $L^2$-projector) $\lproj[X]{l}:L^1(X)\to\Poly{l}(X)$ is defined as follows: For all $v\in L^1(X)$, $\lproj[X]{l}$ is the unique polynomial in $\Poly{l}(X)$ that satisfies
\begin{equation}\label{eq:lproj}
  (\lproj[X]{l}v-v,w)_X=0\qquad\forall w\in\Poly{l}(X).
\end{equation}
Existence and uniqueness of $\lproj[X]{l}v$ follow from the Riesz representation theorem in $\Poly{l}(X)$ for the standard $L^2(X)$-inner product.
Moreover, we have the following characterization:
$$
\lproj[X]{l} v = \argmin_{w\in\Poly{l}(X)}\norm[X]{w-v}^2.
$$
In what follows, we will also need the vector-valued $L^2$-projector denoted by $\vlproj[X]{l}$ and obtained by applying $\lproj[X]{l}$ component-wise.
The following $H^s$-boundedness result is a special case of~\cite[Corollary~3.7]{Di-Pietro.Droniou:16}:
For any $s\in\{0,\ldots,l+1\}$, there exists a real number $C>0$ depending only on $d$, $\varrho$, $l$, and $s$ such that, for all $h\in{\cal H}$, all $T\in\Th$, and all $v\in H^s(T)$,
\begin{equation}\label{eq:stab.lproj}
  \seminorm[H^s(T)]{\lproj[T]{l}v}\le C\seminorm[H^s(T)]{v}.
\end{equation}
At the global level, we denote by $\lproj{l}:L^1(\Omega)\to\Poly{l}(\Th)$ the $L^2$-projector on the broken polynomial space $\Poly{l}(\Th)$ such that, for all $v\in L^1(\Omega)$,
$$
\restrto{(\lproj{l} v)}{T}\eqbydef\lproj[T]{l}\restrto{v}{T}.
$$

\subsubsection{Elliptic projector}
For any mesh element $T\in\Th$, we also define the elliptic projector $\eproj{l}:W^{1,1}(T)\to\Poly{l}(T)$ as follows:
For all $v\in W^{1,1}(T)$, $\eproj{l}v$ is a polynomial in $\Poly{l}(T)$ that satisfies
\begin{subequations}\label{eq:eproj}
\begin{equation}\label{eq:eproj:1}
  (\GRAD(\eproj{l}v-v),\GRAD w)_T=0\qquad\forall w\in\Poly{l}(T).
\end{equation}
By the Riesz representation theorem in $\GRAD\Poly{l}(T)$ for the $L^2(T)^d$-inner product,
this relation defines a unique element $\GRAD\eproj{l}v$, and thus a polynomial
$\eproj{l}v$ up to an additive constant.
This constant is fixed by writing
\begin{equation}\label{eq:eproj:2}
  (\eproj{l}v-v,1)_T=0.
\end{equation}
\end{subequations}
Observing that~\eqref{eq:eproj:1} is trivially verified when $l=0$, it follows from~\eqref{eq:eproj:2} that $\eproj[T]{0}=\lproj[T]{0}$.
Finally, the following characterization holds:
$$
\eproj{l} v = \argmin_{w\in\Poly{l}(T),\,(w-v,1)_T=0}\norm[L^2(T)^d]{\GRAD(w-v)}^2.
$$

\subsubsection{Approximation properties}
On regular mesh sequences, both $\lproj[T]{l}$ and $\eproj[T]{l}$ have optimal approximation properties in $\Poly{l}(T)$, as summarized by the following result (for a proof, see Theorem~1, Theorem~2, and Lemma~13 in~\cite{Di-Pietro.Droniou:16}):
For any $\alpha\in\{0,1\}$ and $s\in\{\alpha,\ldots,l+1\}$, there exists a real number $C>0$  depending only on $d$, $\varrho$, $l$, $\alpha$, and $s$ such that, for all $h\in{\cal H}$, all $T\in\Th$, and all $v\in H^s(T)$, 
\begin{subequations}\label{eq:approx.approx.trace}
  \begin{equation}\label{eq:approx}
    \seminorm[H^m(T)]{v - \pi_T^{\alpha,l} v }
    \le 
    C h_T^{s-m} 
    \seminorm[H^s(T)]{v}
    \qquad \forall m \in \{0,\ldots,s\},
  \end{equation}
  and, if $s\ge 1$,
  \begin{equation}\label{eq:approx.trace}
    \seminorm[{H^m(\Fh[T])}]{v - \pi_T^{\alpha,l} v}
    \le 
    C h_T^{s-m-\frac12} 
    \seminorm[H^s(T)]{v}
    \qquad \forall m \in \{0,\ldots,s-1\},
  \end{equation}
  where $H^m(\Fh[T])\eqbydef\left\{ v\in L^2(\partial T)\st \restrto{v}{F}\in H^m(F)\quad\forall F\in\Fh[T]\right\}$.
\end{subequations}


\section{Basic principles of Hybrid High-Order methods}\label{sec:basics}

To fix the main ideas and notation, we study in this section the HHO discretization of the Poisson problem:
Find $u:\Omega\to\Real$ such that
\begin{subequations}\label{eq:poisson:strong}
  \begin{alignat}{2}\label{eq:poisson:strong:pde}
    -\LAPL u &= f &\qquad&\text{in $\Omega$},
    \\\label{eq:poisson:strong:bc}
    u &= 0 &\qquad&\text{on $\partial\Omega$},
  \end{alignat}
\end{subequations}
where $f\in L^2(\Omega)$ is a given volumetric source term.
More general boundary conditions can replace~\eqref{eq:poisson:strong:bc}, but we restrict the discussion to the homogeneous Dirichlet case for the sake of simplicity.
A detailed treatment of more general boundary conditions including also variable diffusion coefficients can be found in~\cite{Di-Pietro.Ern.ea:16}.

The starting point to devise a HHO discretization is the following weak formulation of problem~\eqref{eq:poisson:strong}: Find $u\in H_0^1(\Omega)$ such that
\begin{equation}\label{eq:poisson:weak}
  a(u,v) = (f,v)\qquad\forall v\in H_0^1(\Omega),
\end{equation}
where the bilinear form $a:H^1(\Omega)\times H^1(\Omega)\to\Real$ is such that
\begin{equation}\label{eq:poisson:a}
  a(u,v)\eqbydef (\GRAD u, \GRAD v).
\end{equation}
In what follows, the quantities $u$ and $-\GRAD u$ will be referred to, respectively, as the potential and the flux.

\subsection{Local construction}\label{sec:basics:local.construction}

Throughout this section, we fix a polynomial degree $k\ge 0$ and a mesh element $T\in\Th$.
We introduce the local ingredients underlying the HHO construction: the DOFs, the potential reconstruction operator, and the discrete counterpart of the restriction to $T$ of the global bilinear form $a$ defined by~\eqref{eq:poisson:a}.

\subsubsection{Computing the local elliptic projection from $L^2$-projections}\label{sec:basics:local:ibp}

Consider a function $v\in H^1(T)$.
We note the following integration by parts formula, valid for all $w\in C^\infty(\overline{T})$:
\begin{equation}\label{eq:ipp}
  (\GRAD v,\GRAD w)_T = -(v, \LAPL w)_T + \sum_{F\in\Fh[T]}(v, \GRAD w\SCAL\normal_{TF})_F.
\end{equation}
Specializing~\eqref{eq:ipp} to $w\in\Poly{k+1}(T)$, we obtain
\begin{subequations}\label{eq:ipp.poly}  
  \begin{equation}\label{eq:ipp.poly:1}
    (\GRAD\eproj[T]{k+1}v,\GRAD w)_T = -(\lproj[T]{k-1}v, \LAPL w)_T + \sum_{F\in\Fh[T]}(\lproj[F]{k} v, \GRAD w\SCAL\normal_{TF})_F,
  \end{equation}
  where we have used~\eqref{eq:eproj} to insert $\eproj[T]{k+1}$ into the left-hand side and~\eqref{eq:lproj} to insert $\lproj[T]{k-1}$ and $\lproj[F]{k}$ into the right-hand side after  observing that $\LAPL w\in\Poly{k-1}(T)\subset\Poly{k}(T)$ and $\restrto{(\GRAD w)}{F}\SCAL\normal_{TF}\in\Poly{k}(F)$ for all $F\in\Fh[T]$.
  Moreover, recalling~\eqref{eq:eproj:2} and using the definition~\eqref{eq:lproj} of the $L^2$-projector, we infer that
  \begin{equation}\label{eq:ipp.poly:2}
    (v-\lproj[T]{0}v,1)_T
    =(\eproj[T]{k+1}v-\lproj[T]{\max(0,k-1)}v,1)_T
    =0.
  \end{equation}
\end{subequations}
The relations~\eqref{eq:ipp.poly} show that computing the elliptic projection $\eproj[T]{k+1}v$ does not require a full knowledge of the function $v$. All that is required is
\begin{enumerate}[(i)]
\item $\lproj[T]{\max(0,k-1)}v$, the $L^2$-projection of $v$ on the polynomial space $\Poly{\max(0,k-1)}(T)$. Clearly, one could also choose $\lproj[T]{k}v$ instead, which has the advantage of not requiring a special treatment of the case $k=0$;
\item for all $F\in\Fh[T]$, $\lproj[F]{k}\restrto{v}{F}$, the $L^2$-projection of the trace of $v$ on $F$ on the polynomial space $\Poly{k}(F)$.
\end{enumerate}

\subsubsection{Local space of degrees of freedom}

The remark at the end of the previous section motivates the introduction of the following space of DOFs (see Fig.~\ref{fig:dofspho}):
\begin{equation}\label{eq:UT}
  \UT\eqbydef\Poly{k}(T)\times\left(
  \bigtimes_{F\in\Fh[T]}\Poly{k}(F)
  \right).
\end{equation}
\begin{figure}\centering
  \includegraphics{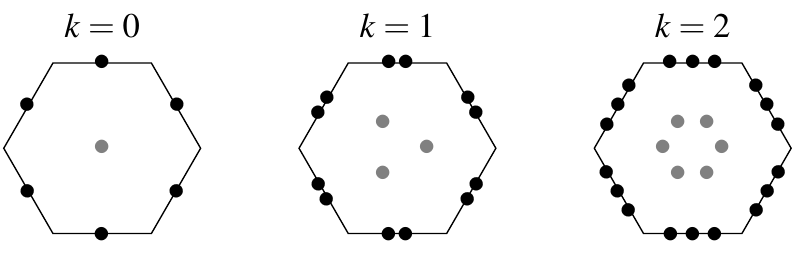}
  \caption{DOFs in $\UT$ for $k\in\{0,1,2\}$.\label{fig:dofspho}}
\end{figure}%

Observe that naming $\UT$ space of DOFs involves a shortcut: the actual DOFs can be chosen in several equivalent ways (polynomial moments, point values, etc.), and the specific choice does not affect the following discussion.
For a generic vector of DOFs in $\UT$, we use the underlined notation $\uv = (v_T, (v_F)_{F\in\Fh[T]})$.
On $\UT$, we define the $H^1$-like seminorm $\norm[1,T]{{\cdot}}$ such that, for all $\uv\in\UT$,
\begin{equation}\label{eq:norm.1T}
  \norm[1,T]{\uv}^2\eqbydef\norm[T]{\GRAD v_T}^2 + \seminorm[1,\partial T]{\uv}^2,\qquad
  \seminorm[1,\partial T]{\uv}^2\eqbydef\sum_{F\in\Fh[T]} h_F^{-1}\norm[F]{v_F-v_T}^2,
\end{equation}
where $h_F$ denotes the diameter of $F$.
The negative power of $h_F$ in the second term ensures that both contributions have the same scaling.
The DOFs corresponding to a smooth function $v\in W^{1,1}(T)$ are obtained via the reduction map $\IT:W^{1,1}(T)\to\UT$ such that
\begin{equation}\label{eq:IT}
  \IT v\eqbydef (\lproj[T]{k}v, (\lproj[F]{k}\restrto{v}{F})_{F\in\Fh[T]}).
\end{equation}

\subsubsection{Potential reconstruction operator}

Inspired by formula~\eqref{eq:ipp.poly}, we introduce the potential reconstruction operator $\pT:\UT\to\Poly{k+1}(T)$ such that, for all $\uv\in\UT$,
\begin{subequations}\label{eq:pT}
  \begin{equation}\label{eq:pT:1}
    (\GRAD\pT\uv,\GRAD w)_T = -(v_T,\LAPL w)_T + \sum_{F\in\Fh[T]}(v_F, \GRAD w\SCAL\normal_{TF})_F
    \quad\forall w\in\Poly{k+1}(T)
  \end{equation}
  and
  \begin{equation}\label{eq:pT:2}
    (\pT\uv-v_T,1)_T = 0.
  \end{equation}
\end{subequations}
Notice that $\pT\uv$ is a polynomial function on $T$ one degree higher than the element-based DOFs $v_T$.
By definition, for all $v\in W^{1,1}(T)$ it holds that
\begin{equation}\label{eq:pT:commuting}
  (\pT\circ\IT) v = \eproj[T]{k+1} v,
\end{equation}
i.e., the composition of the potential reconstruction operator with the reduction map gives the elliptic projector on $\Poly{k+1}(T)$.
An immediate consequence of~\eqref{eq:pT:commuting} together with~\eqref{eq:approx.approx.trace} is that $\pT\circ\IT$ has optimal approximation properties in $\Poly{k+1}(T)$.

\subsubsection{Local contribution}

We approximate the restriction $\restrto{a}{T}:H^1(T)\times H^1(T)\to\Real$ to $T$ of the continuous bilinear form $a$ defined by~\eqref{eq:poisson:a} by the discrete bilinear form $\mathrm{a}_T:\UT\times\UT\to\Real$ such that
\begin{equation}\label{eq:aT}
  \mathrm{a}_T(\uu,\uv) \eqbydef (\GRAD\pT\uu,\GRAD\pT\uv)_T + \mathrm{s}_T(\uu,\uv),
\end{equation}
where the first term in the right-hand side is the usual Galerkin contribution, while the second is a stabilization contribution for which we consider the following design conditions, originally proposed in~\cite{Boffi.Di-Pietro:16}:

\begin{assumption}[Local stabilization bilinear form $\mathrm{s}_T$]\label{ass:sT}
  The local stabilization bilinear form $\mathrm{s}_T:\UT\times\UT\to\Real$ satisfies the following properties:
  \begin{enumerate}[(S1)]
  \item \emph{Symmetry and positivity.} $\mathrm{s}_T$ is symmetric and positive semidefinite;
  \item \emph{Stability.} There is a real number $\eta>0$ independent of $h$ and of $T$, but possibly depending on $d$, $\varrho$, and $k$, such that
    \begin{equation}\label{eq:stab.T}
      \eta^{-1}\norm[1,T]{\uv}^2\le 
      \mathrm{a}_T(\uv,\uv)\le \eta\norm[1,T]{\uv}^2
      \qquad\forall \uv\in\UT;
    \end{equation}
  \item \emph{Polynomial consistency.} For all $w\in\Poly{k+1}(T)$ and all $\uv\in\UT$, it holds that
    \begin{equation}\label{eq:poly.cons.T}
      \mathrm{s}_T(\IT w,\uv) = 0.
    \end{equation}
  \end{enumerate}
\end{assumption}

These requirements suggest that $\mathrm{s}_T$ can be obtained penalizing in a least square sense residuals that vanish for reductions of polynomial functions in $\Poly{k+1}(T)$.
Paradigmatic examples of such residuals are provided by the operators $\delta_T^k:\UT\to\Poly{k}(T)$ and, for all $F\in\Fh[T]$, $\delta_{TF}^k:\UT\to\Poly{k}(F)$ such that, for all $\uv\in\UT$,
\begin{equation}\label{eq:residuals}
  \delta_T^k\uv\eqbydef\lproj[T]{k}(\pT\uv-v_T),\qquad
  \delta_{TF}^k\uv\eqbydef\lproj[F]{k}(\pT\uv-v_F)\quad\forall F\in\Fh[T].
\end{equation}
To check that $\delta_T^k$ vanishes when $\uv=\IT w$ with $w\in\Poly{k+1}(T)$, we observe that
$$
\delta_T^k\IT w
= \lproj[T]{k}(\pT\IT w - \lproj[T]{k} w)
= \lproj[T]{k}(\eproj[T]{k+1} w - w)
= \lproj[T]{k}(w - w) = 0,
$$
where we have used the definition of $\delta_T^k$ in the first equality, the relation~\eqref{eq:pT:commuting} to replace $\pT\IT$ by $\eproj[T]{k+1}$ and the fact that $\lproj[T]{k}w\in\Poly{k}(T)$ to cancel $\lproj[T]{k}$ from the second term in parentheses, and the fact that $\eproj[T]{k+1}$ leaves polynomials of total degree up to $(k+1)$ unaltered as a projector to conclude.
A similar argument shows that $\delta_{TF}^k\IT w=0$ for all $F\in\Fh[T]$ whenever $w\in\Poly{k+1}(T)$.

Accounting for dimensional homogeneity with the Galerkin term, one possible expression for $\mathrm{s}_T$ is thus
\begin{equation}\label{eq:sT.vem}
  \mathrm{s}_T(\uu,\uv)\eqbydef h_T^{-2}(\delta_T^k\uu,\delta_T^k\uv)_T + \sum_{F\in\Fh[T]}h_F^{-1}(\delta_{TF}^k\uu,\delta_{TF}^k\uv)_F.
\end{equation}
This choice, inspired by the Virtual Element literature \cite{Beirao-da-Veiga.Brezzi.ea:13}, differs from the original HHO stabilization of~\cite{Di-Pietro.Ern.ea:14}, where the following expression is considered instead:
\begin{equation}\label{eq:sT.hho}
  \mathrm{s}_T(\uu,\uv)\eqbydef \sum_{F\in\Fh[T]}h_F^{-1}(\delta_{TF}^k\uu-\delta_T^k\uu,\delta_{TF}^k\uv-\delta_T^k\uv)_F.
\end{equation}
In this case, only quantities at faces are penalized.
Both of the above expressions match the design conditions (S1)--(S3) and are essentially equivalent in terms of implementation.
A detailed proof for $\mathrm{s}_T$ as in~\eqref{eq:sT.hho} can be found in~\cite[Lemma~4]{Di-Pietro.Ern.ea:14}.
Yet another example of stabilization bilinear form used in the context of HHO methods is provided by~\cite[Eq.~(3.24)]{Aghili.Boyaval.ea:15}.
This expression results from the hybridization of the Mixed High-Order method of~\cite{Di-Pietro.Ern:16}.

\begin{remark}[Original HDG stabilization]
  The following stabilization bilinear form is used in the original Hybridizable Discontinuous Galerkin (HDG) method of \cite{Castillo.Cockburn.ea:00,Cockburn.Gopalakrishnan.ea:09}:
  $$
  \mathrm{s}_T(\uu,\uv) 
  = \sum_{F\in\Fh[T]}h_F^{-1} (u_F-u_T,v_F-v_T)_F.
  $$
  While this choice obviously satisfies the properties (S1)-(S2), it fails to satisfy (S3) (it is only consistent for polynomials of degree up to $k$).
  As a result, up to one order of convergence is lost with respect to the estimates of Theorems \ref{thm:poisson:en.err.est} and \ref{thm:poisson:l2.err.est} below.
  For a discussion including fixes that restore optimal orders of convergence in HDG see \cite{Cockburn.Di-Pietro.ea:16}.
\end{remark}

\subsubsection{Consistency properties of the stabilization for smooth functions}
In the following proposition we study the consistency properties of $\mathrm{s}_T$ when its arguments are reductions of a smooth function.
We give a detailed proof since this result is a new extension of the bound in~\cite[Theorem~8]{Di-Pietro.Ern.ea:14} (see, in particular, Eq.~(45) therein) to more general stabilization bilinear forms.
\begin{proposition}[Consistency of $\mathrm{s}_T$]\label{prop:approx.sT}
  Let $\mathrm{s}_T$ denote a stabilization bilinear form satisfying assumptions (S1)--(S3).
  Then, there is a real number $C>0$ independent of $h$, but possibly depending on $d$, $\varrho$, and $k$, such that, for all $T\in\Th$ and all $v\in H^{k+2}(T)$, it holds that
  \begin{equation}\label{eq:approx.sT}
    \mathrm{s}_T(\IT v, \IT v)^{\nicefrac12}\le C h_T^{k+1}\norm[H^{k+2}(T)]{v}.
  \end{equation}
\end{proposition}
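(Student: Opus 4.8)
The plan is to exploit the polynomial consistency property (S3) together with the stability property (S2) to reduce the estimate to a classical approximation bound for the elliptic (or $L^2$-) projector. Specifically, I would first observe that since $\mathrm{s}_T$ is a symmetric positive semidefinite bilinear form (property (S1)), it induces a seminorm on $\UT$ satisfying the Cauchy--Schwarz inequality. Then, for any $w\in\Poly{k+1}(T)$, property (S3) gives $\mathrm{s}_T(\IT w,\uv)=0$ for all $\uv\in\UT$; choosing $\uv=\IT v - \IT w = \IT(v-w)$ (using linearity of the reduction map $\IT$) and also $\uv = \IT w$ itself, we get by bilinearity
\begin{equation*}
  \mathrm{s}_T(\IT v,\IT v) = \mathrm{s}_T(\IT v,\IT v) - \mathrm{s}_T(\IT v,\IT w) - \mathrm{s}_T(\IT w, \IT v) + \mathrm{s}_T(\IT w, \IT w) = \mathrm{s}_T(\IT(v-w),\IT(v-w)).
\end{equation*}
So it suffices to bound $\mathrm{s}_T(\IT(v-w),\IT(v-w))^{\nicefrac12}$ for a convenient choice of $w$, and the natural choice is $w=\eproj[T]{k+1}v$, the elliptic projection, whose approximation properties are quantified by~\eqref{eq:approx.approx.trace}.

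Next I would use the upper bound in the stability estimate~\eqref{eq:stab.T}: since $\mathrm{s}_T(\uv,\uv)\le \mathrm{a}_T(\uv,\uv)\le\eta\norm[1,T]{\uv}^2$ (the first inequality because the Galerkin term $(\GRAD\pT\uv,\GRAD\pT\uv)_T$ is nonnegative), we reduce to estimating $\norm[1,T]{\IT(v-w)}$ with $w=\eproj[T]{k+1}v$. Unfolding the definition~\eqref{eq:norm.1T} of the $\norm[1,T]{\cdot}$-seminorm applied to $\IT(v-w)=(\lproj[T]{k}(v-w),(\lproj[F]{k}(v-w))_{F\in\Fh[T]})$, I need to control two kinds of terms: $\norm[T]{\GRAD\lproj[T]{k}(v-w)}$ and, for each face, $h_F^{-\nicefrac12}\norm[F]{\lproj[F]{k}(v-w)-\lproj[T]{k}(v-w)}$. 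For the first, I use the $H^1$-boundedness~\eqref{eq:stab.lproj} of $\lproj[T]{k}$ (with $s=1$) to pass to $\norm[T]{\GRAD(v-w)}=\seminorm[H^1(T)]{v-\eproj[T]{k+1}v}$, then~\eqref{eq:approx} with $\alpha=1$, $m=1$, $s=k+2$ gives the bound $Ch_T^{k+1}\seminorm[H^{k+2}(T)]{v}$. For the face terms, I insert $\pm(v-w)$ and split as
\begin{equation*}
  \norm[F]{\lproj[F]{k}(v-w)-\lproj[T]{k}(v-w)}\le \norm[F]{\lproj[F]{k}(v-w)-(v-w)} + \norm[F]{(v-w)-\lproj[T]{k}(v-w)};
\end{equation*}
the first summand is controlled by~\eqref{eq:approx.trace} on the face (with $\alpha=k$... actually using $\lproj[F]{k}$ directly via a standard trace-type approximation bound, $\le Ch_F^{k+\nicefrac32}\seminorm[H^{k+2}(F\text{-side})]{\cdot}$ — more cleanly, since $v-w$ with $w=\eproj[T]{k+1}v$ is small, one bounds $\norm[F]{(\mathrm{Id}-\lproj[F]{k})(v-w)}$ using that $\lproj[F]{k}$ is the best $L^2(F)$ approximation and a trace inequality plus~\eqref{eq:approx}), and the second summand is exactly~\eqref{eq:approx.trace} applied to $v$ with $\pi_T^{1,k}$... here one must be slightly careful because $\lproj[T]{k}$ is the $L^2$-projector $\pi_T^{0,k}$, so I would instead write $v-w-\lproj[T]{k}(v-w) = (v-w) - \lproj[T]{k}(v-w)$ and, since $\lproj[T]{k}$ annihilates $\Poly{k}(T)\ni$ nothing helpful directly, use $v - \lproj[T]{k}v - (w-\lproj[T]{k}w)$; the term $v-\lproj[T]{k}v$ is handled by~\eqref{eq:approx.trace} with $\alpha=0$, and $w - \lproj[T]{k}w = \eproj[T]{k+1}v - \lproj[T]{k}\eproj[T]{k+1}v$ is the degree-$(k+1)$ part, bounded by a trace inequality and~\eqref{eq:approx} applied to $\eproj[T]{k+1}v$, itself $H^{k+2}$-bounded in terms of $v$ by~\eqref{eq:stab.lproj}-type arguments or directly by $\seminorm[H^{k+1}(T)]{\eproj[T]{k+1}v}\lesssim\seminorm[H^{k+1}(T)]{v}$. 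Collecting powers of $h_T$ (using $h_F\le h_T$ and $\varrho^2 h_T\le h_F$ from~\eqref{eq:hT_hF} to interchange $h_F$ and $h_T$ up to constants, and $\card\Fh[T]\le N_\partial$ from~\eqref{eq:Np}) yields $\norm[1,T]{\IT(v-w)}\le Ch_T^{k+1}\norm[H^{k+2}(T)]{v}$, and the proposition follows.

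The main obstacle I anticipate is the careful bookkeeping in the face terms: because the stabilization seminorm must be controlled through the \emph{generic} $\norm[1,T]{\cdot}$-seminorm (we are only allowed to use the abstract properties (S1)--(S3) plus~\eqref{eq:stab.T}, not the explicit forms~\eqref{eq:sT.vem} or~\eqref{eq:sT.hho}), every face contribution has to be routed through the two projectors $\lproj[F]{k}$ and $\lproj[T]{k}$, and one has to repeatedly invoke the discrete trace inequality~\eqref{eq:trace.disc} and the $H^s$-stability~\eqref{eq:stab.lproj} of $\lproj[T]{k}$ to convert face norms of projected quantities back into element Sobolev seminorms of $v$, all while tracking that the exponent of $h_T$ comes out to exactly $k+1$. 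The only subtlety beyond routine is ensuring the trace inequality is applied to polynomials (so~\eqref{eq:trace.disc} is legitimate) and handling the non-polynomial remainder $v-w$ via the continuous trace theorem scaled correctly — but since $w=\eproj[T]{k+1}v$ makes $v-w$ enjoy the full approximation estimates~\eqref{eq:approx.approx.trace}, this causes no real difficulty.
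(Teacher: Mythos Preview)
Your proposal is correct and follows essentially the same approach as the paper: subtract $\IT w$ with $w=\eproj[T]{k+1}v$ using (S3), invoke the upper bound in (S2) to pass to $\norm[1,T]{\IT(v-w)}$, then estimate the gradient and face contributions via the approximation properties~\eqref{eq:approx.approx.trace} of the elliptic projector. The only place where the paper is cleaner is the face term: rather than inserting $\pm(v-w)$, observe that $\restrto{(\lproj[T]{k}(v-w))}{F}\in\Poly{k}(F)$ so that $\lproj[F]{k}(v-w)-\lproj[T]{k}(v-w)=\lproj[F]{k}\big((v-w)-\lproj[T]{k}(v-w)\big)$, then use the $L^2(F)$-boundedness of $\lproj[F]{k}$ followed by a trace/Poincar\'e argument applied to $(v-w)-\lproj[T]{k}(v-w)$; this avoids the splitting and the somewhat tangled bookkeeping you anticipate.
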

\begin{proof}
  We set, for the sake of brevity, $\cv\eqbydef\eproj[T]{k+1}v$ and abridge as $A\lesssim B$ the inequality $A\le cB$ with multiplicative constant $c>0$ having the same dependencies as $C$ in~\eqref{eq:approx.sT}.
  Using (S2) and (S3) we infer that
  \begin{equation}\label{eq:approx.sT:0}
    \mathrm{s}_T(\IT v, \IT v)^{\nicefrac12}
    = \mathrm{s}_T(\IT (v-\cv), \IT (v-\cv))^{\nicefrac12}
    \le\eta\norm[1,T]{\IT (v-\cv)}.
  \end{equation}
  Recalling~\eqref{eq:norm.1T}, we have that
  \begin{multline}\label{eq:approx.sT:1}
    \norm[1,T]{\IT (v-\cv)}^2
    = \\
    \norm[T]{\GRAD\lproj[T]{k}(v-\cv)}^2
    + \sum_{F\in\Fh[T]}h_F^{-1}\norm[F]{\lproj[F]{k}(v-\cv-\lproj[T]{k}(v-\cv))}^2.
  \end{multline}
  Using the $H^1(T)$-boundedness of $\lproj[T]{k}$ resulting from~\eqref{eq:stab.lproj} with $l=k$, and $s=1$ followed by the optimal approximation properties~\eqref{eq:approx} of $\cv$ (with $\alpha=1$, $l=k+1$, $s=k+2$, and $m=1$), it is inferred that
  \begin{equation}\label{eq:approx.sT:2}
    \norm[T]{\GRAD\lproj[T]{k}(v-\cv)}\lesssim\norm[T]{\GRAD(v-\cv)}\lesssim h^{k+1}\norm[H^{k+2}(T)]{v}.
  \end{equation}
  On the other hand, for all $F\in\Fh[T]$ it holds that
  \begin{equation}\label{eq:approx.sT:3}
    \begin{aligned}
      h_F^{-\nicefrac12}\norm[F]{\lproj[F]{k}(v-\cv-\lproj[T]{k}(v-\cv))}
      &\lesssim h_T^{-1}\norm[T]{v-\cv-\lproj[T]{k}(v-\cv)}
      \\
      &\lesssim \norm[T]{\GRAD(v-\cv)}
      \\
      &\lesssim h_T^{k+1}\norm[H^{k+2}(T)]{v},
    \end{aligned}
  \end{equation}
  where we have used the $L^2(F)$-boundedness of $\lproj[F]{k}$ together with~\eqref{eq:hT_hF} and the discrete trace inequality~\eqref{eq:trace.disc} in the first line, a local Poincar\'{e} inequality resulting from~\eqref{eq:approx} with $\alpha=0$, $l=k$, $s=1$, and $m=0$  to pass to the second line, and the optimal approximation properties of $\cv$ expressed by~\eqref{eq:approx} with $\alpha=1$, $l=k+1$, $s=k+2$, and $m=1$ to conclude.
  Plugging~\eqref{eq:approx.sT:2} and~\eqref{eq:approx.sT:3} into~\eqref{eq:approx.sT:1}, recalling that $\card(\Fh[T])\lesssim 1$ (see~\eqref{eq:Np}), and using the resulting bound to estimate~\eqref{eq:approx.sT:0},~\eqref{eq:approx.sT} follows.\qed
\end{proof}

\subsection{Discrete problem}\label{sec:basics:discrete.problem}
We now show how to formulate the discrete problem from the local contributions introduced in the previous section.

\subsubsection{Global spaces of degrees of freedom}
We define the following global space of DOFs with single-valued interface unknowns:
$$
\Uh\eqbydef\left(\bigtimes_{T\in\Th}\Poly{k}(T)\right)\times\left(\bigtimes_{F\in\Fh}\Poly{k}(F)\right).
$$
Notice that single-valued means here that interface values match from one element to the adjacent one.
For a generic element $\uv[h]\in\Uh$, we use the underlined notation $\uv[h]=((v_T)_{T\in\Th},(v_F)_{F\in\Fh})$ and, for all $T\in\Th$, we denote by $\uv[T]=(v_T,(v_F)_{F\in\Fh[T]})\in\UT$ its restriction to $T$.
We also define the broken polynomial function $v_h\in\Poly{k}(\Th)$ such that
$$
\restrto{v_h}{T}\eqbydef v_T\qquad\forall T\in\Th.
$$
The DOFs corresponding to a smooth function $v\in W^{1,1}(\Omega)$ are obtained via the reduction map $\Ih:W^{1,1}(\Omega)\to\Uh$ such that
$$
\Ih v\eqbydef ((\lproj[T]{k}\restrto{v}{T})_{T\in\Th}, (\lproj[F]{k}\restrto{v}{F})_{F\in\Fh}).
$$
We define on $\Uh$ the seminorm $\norm[1,h]{{\cdot}}$ such that, for all $\uv[h]\in\Uh$,
\begin{equation}\label{eq:norm1h}
  \norm[1,h]{\uv[h]}^2\eqbydef\sum_{T\in\Th}\norm[1,T]{\uv}^2,
\end{equation}
with local seminorm $\norm[1,T]{{\cdot}}$ defined by~\eqref{eq:norm.1T}.
To account for the homogeneous Dirichlet boundary condition~\eqref{eq:poisson:strong:bc} in a strong manner, we introduce the subspace
$$
\UhD\eqbydef\left\{
\uv[h]\in\Uh\st v_F\equiv 0\quad\forall F\in\Fhb
\right\}.
$$
We recall the following discrete Poincar\'{e} inequality proved in~\cite[Proposition~5.4]{Di-Pietro.Droniou:16}:
There exists a real number $C_{\rm P}>0$ independent of $h$, but possibly depending on $\Omega$, $\varrho$, and $k$, such that, for all $\uv[h]\in\UhD$,
\begin{equation}\label{eq:poincare}
  \norm{v_h}\le C_{\rm P}\norm[1,h]{\uv[h]}.
\end{equation}
\begin{proposition}[{Norm $\norm[1,h]{{\cdot}}$}]
  The map $\norm[1,h]{{\cdot}}$ defines a norm on $\UhD$.
\end{proposition}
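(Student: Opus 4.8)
The plan is to verify the three norm axioms on $\UhD$: homogeneity, the triangle inequality, and definiteness. Since $\norm[1,h]{{\cdot}}$ is built as the square root of a sum of squared quantities in~\eqref{eq:norm1h} and~\eqref{eq:norm.1T}, and each elementary ingredient ($\norm[T]{\GRAD v_T}$ and $h_F^{-\nicefrac12}\norm[F]{v_F-v_T}$) is itself an $L^2$-seminorm of a linear function of $\uv[h]$, homogeneity is immediate and the triangle inequality follows from Minkowski's inequality applied termwise and then to the Euclidean sum over elements and faces. These are routine and I would dispatch them in one sentence.

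The only substantive point is definiteness: I must show that $\norm[1,h]{\uv[h]}=0$ with $\uv[h]\in\UhD$ forces $\uv[h]=\underline{0}$. First I would observe that $\norm[1,h]{\uv[h]}=0$ implies, for every $T\in\Th$, that $\GRAD v_T=0$ on $T$ (so $v_T$ is a constant on each element) and, for every $F\in\Fh[T]$, that $v_F=v_T$ a.e.\ on $F$. Then I would use a connectedness argument: given any two elements $T_1,T_2\in\Th$, connect them by a chain of elements in which consecutive members share an interface (possible because $\closure{\Omega}$ is connected and the mesh covers it); across each shared interface $F$, the single-valuedness of the interface unknown gives $v_{T_1'} = v_F = v_{T_2'}$ for the two elements adjacent to $F$, so all the element constants are equal to a common value $c\in\Real$, and consequently every face unknown equals $c$ as well. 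Finally, since $\uv[h]\in\UhD$, there is at least one boundary face $F\in\Fhb$ with $v_F\equiv 0$, whence $c=0$; thus $v_T=0$ for all $T$ and $v_F=0$ for all $F$, i.e., $\uv[h]=\underline{0}$.

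Alternatively — and perhaps more slickly — definiteness follows immediately from the discrete Poincaré inequality~\eqref{eq:poincare}: if $\norm[1,h]{\uv[h]}=0$ then $\norm{v_h}=0$, so $v_T=0$ for all $T$, and then the condition $\seminorm[1,\partial T]{\uv}=0$ from~\eqref{eq:norm.1T} forces $\norm[F]{v_F}=\norm[F]{v_F-v_T}=0$, hence $v_F=0$ for all $F\in\Fh$ (boundary faces are already zero by the definition of $\UhD$). I would present this second route as the main argument, since~\eqref{eq:poincare} is available, and relegate the connectedness argument to a parenthetical remark or omit it entirely. The main obstacle, such as it is, is purely bookkeeping: making sure the chain-of-elements argument (if used) correctly invokes both the partition property of the faces in Definition~\ref{def:mesh} and the single-valuedness built into $\Uh$; with the Poincaré inequality in hand, even that is bypassed and the proof is essentially a two-line deduction.
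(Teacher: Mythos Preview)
Your proposal is correct, and the main argument you single out --- using the discrete Poincar\'e inequality~\eqref{eq:poincare} to conclude $v_T\equiv 0$ for all $T$, then reading off $v_F\equiv 0$ from $\norm[F]{v_F-v_T}=0$ --- is exactly the route the paper takes. The connectedness argument you sketch as an alternative is also valid but is not used in the paper; you are right to prefer the Poincar\'e route given that~\eqref{eq:poincare} is already available.
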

\begin{proof}
  The seminorm property being evident, it suffices to prove that, for all $\uv[h]\in\UhD$, $\norm[1,h]{\uv[h]}=0\implies\uv[h]=\underline{0}_h$.
  Let $\uv[h]\in\UhD$ be such that $\norm[1,h]{\uv[h]}=0$.
  By~\eqref{eq:poincare}, we have $\norm{v_h}=0$, hence $v_T\equiv 0$ for all $T\in\Th$.
  From the definition~\eqref{eq:norm.1T} of the norm $\norm[1,T]{{\cdot}}$, we also have that $\norm[F]{v_F-v_T}=0$ for all $T\in\Th$ and all $F\in\Fh[T]$, hence $v_F=v_T\equiv 0$.
  Since any mesh face belongs to the set $\Fh[T]$ for at least one mesh element $T\in\Th$, this concludes the proof.\qed
\end{proof}
\subsubsection{Global bilinear form}
We define the global bilinear forms $\mathrm{a}_h:\Uh\times\Uh\to\Real$ and $\mathrm{s}_h:\Uh\times\Uh\to\Real$ by element-by-element assembly setting, for all $\uu[h],\uv[h]\in\Uh$,
\begin{equation}\label{eq:ah}
  \mathrm{a}_h(\uu[h],\uv[h])\eqbydef\sum_{T\in\Th}\mathrm{a}_T(\uu,\uv),\qquad
  \mathrm{s}_h(\uu[h],\uv[h])\eqbydef\sum_{T\in\Th}\mathrm{s}_T(\uu,\uv).
\end{equation}
\begin{lemma}[Properties of $\mathrm{a}_h$]
  The bilinear form $\mathrm{a}_h$ enjoys the following properties:
  \begin{enumerate}[(i)]
  \item \emph{Stability.} For all $\uv[h]\in\UhD$ it holds with $\eta$ as in~\eqref{eq:stab.T} that
    \begin{equation}\label{eq:stab.h}
      \eta^{-1}\norm[1,h]{\uv[h]}^2\le\norm[\mathrm{a},h]{\uv[h]}^2\eqbydef\mathrm{a}_h(\uv[h],\uv[h])\le\eta\norm[1,h]{\uv[h]}^2.
    \end{equation}
  \item \emph{Consistency.} There is a real number $C>0$ independent of $h$, but possibly depending on $d$, $\varrho$, and $k$, such that, for all $w\in H_0^1(\Omega)\cap H^{k+2}(\Omega)$,
    \begin{equation}\label{eq:consistency.h}
      \sup_{\uv[h]\in\UhD,\norm[1,h]{\uv[h]}=1}{\cal E}_h(w;\uv[h])\le C h^{k+1}\norm[H^{k+2}(\Omega)]{w},
    \end{equation}
    with linear form ${\cal E}_h(w;\cdot):\Uh\to\Real$ representing the conformity error such that, for all $\uv[h]\in\Uh$,
    \begin{equation}\label{eq:Eh}
      {\cal E}_h(w;\uv[h])\eqbydef -(\LAPL w,v_h) - \mathrm{a}_h(\Ih w, \uv[h]).
    \end{equation}
  \end{enumerate}
\end{lemma}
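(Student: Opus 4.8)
Throughout, $a\lesssim b$ stands for $a\le C b$ with $C>0$ independent of $h$ but possibly depending on $d$, $\varrho$, and $k$.

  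\textbf{Part (i).}
  Summing the local stability estimate~\eqref{eq:stab.T} over $T\in\Th$ and using the definitions~\eqref{eq:ah} of $\mathrm{a}_h$ and~\eqref{eq:norm1h} of $\norm[1,h]{{\cdot}}$ gives~\eqref{eq:stab.h} at once; since $\norm[1,h]{{\cdot}}$ is a norm on $\UhD$, the leftmost inequality in~\eqref{eq:stab.h} shows that $\norm[\mathrm{a},h]{{\cdot}}$ is one as well.

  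\textbf{Part (ii): reformulation of the conformity error.}
  The plan is to recast ${\cal E}_h(w;\uv[h])$ from~\eqref{eq:Eh} as a sum of local residuals tested against the components of $\uv[h]$. For each $T\in\Th$ one first expands $\mathrm{a}_T(\IT w,\uv)$ using~\eqref{eq:aT} and the commutation property~\eqref{eq:pT:commuting} (so that the Galerkin part becomes $(\GRAD\eproj[T]{k+1}w,\GRAD\pT\uv)_T$), and then applies the defining relation~\eqref{eq:pT:1} of $\pT$ with the polynomial test function $\eproj[T]{k+1}w\in\Poly{k+1}(T)$; this yields $\mathrm{a}_T(\IT w,\uv)=-(v_T,\LAPL\eproj[T]{k+1}w)_T+\sum_{F\in\Fh[T]}(v_F,\GRAD\eproj[T]{k+1}w\SCAL\normal_{TF})_F+\mathrm{s}_T(\IT w,\uv)$. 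Subtracting the sum over $T$ of these identities from $-(\LAPL w,v_h)=-\sum_{T\in\Th}(\LAPL w,v_T)_T$, grouping the volume terms into $(v_T,\LAPL(\eproj[T]{k+1}w-w))_T$, and---crucially---using that $w\in H_0^1(\Omega)\cap H^{k+2}(\Omega)\subset H^2(\Omega)$ has single-valued normal traces of $\GRAD w$ across interfaces while $v_F\equiv 0$ on $\partial\Omega$ (so that $\sum_{T\in\Th}\sum_{F\in\Fh[T]}(v_F,\GRAD w\SCAL\normal_{TF})_F=0$), one may replace $\eproj[T]{k+1}w$ by $\eproj[T]{k+1}w-w$ in the face terms; an element-wise integration by parts of the volume term then yields
  \begin{multline*}
    {\cal E}_h(w;\uv[h]) = -\sum_{T\in\Th}(\GRAD v_T,\GRAD(\eproj[T]{k+1}w-w))_T
    \\ + \sum_{T\in\Th}\sum_{F\in\Fh[T]}(v_T-v_F,\GRAD(\eproj[T]{k+1}w-w)\SCAL\normal_{TF})_F
    \\ - \sum_{T\in\Th}\mathrm{s}_T(\IT w,\uv).
  \end{multline*}

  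\textbf{Part (ii): estimates.}
  Each of the three contributions is bounded by Cauchy--Schwarz, factoring out $\norm[1,h]{\uv[h]}$ (for the face term, after inserting the scaling weights $h_F^{-\nicefrac12}$ and $h_F^{\nicefrac12}$). The volume term uses $\norm[T]{\GRAD(\eproj[T]{k+1}w-w)}\lesssim h_T^{k+1}\seminorm[H^{k+2}(T)]{w}$ from~\eqref{eq:approx} (with $\alpha=1$, $s=k+2$, $m=1$). The face term uses $\norm[F]{\GRAD(\eproj[T]{k+1}w-w)\SCAL\normal_{TF}}\lesssim h_T^{k+\nicefrac12}\norm[H^{k+2}(T)]{w}$, obtained by inserting $\vlproj[T]{k}\GRAD w$ and estimating $\GRAD w-\vlproj[T]{k}\GRAD w$ on $F$ componentwise via~\eqref{eq:approx.trace} (with $\alpha=0$) and the polynomial remainder $\vlproj[T]{k}\GRAD w-\GRAD\eproj[T]{k+1}w$ via the discrete trace inequality~\eqref{eq:trace.disc} together with~\eqref{eq:approx}. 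The stabilization term uses Cauchy--Schwarz in $\mathrm{s}_T$ (legitimate by (S1)), Proposition~\ref{prop:approx.sT} to bound $\mathrm{s}_T(\IT w,\IT w)^{\nicefrac12}\lesssim h_T^{k+1}\norm[H^{k+2}(T)]{w}$, and the trivial bound $\mathrm{s}_T(\uv,\uv)\le\mathrm{a}_T(\uv,\uv)\le\eta\norm[1,T]{\uv}^2$ from~\eqref{eq:aT} and~\eqref{eq:stab.T}. Summing over $T\in\Th$, using $\card(\Fh[T])\le N_\partial$ (see~\eqref{eq:Np}) and $h_T\le h$, and taking the supremum over $\uv[h]\in\UhD$ with $\norm[1,h]{\uv[h]}=1$ then gives~\eqref{eq:consistency.h}.

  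\textbf{Expected main obstacle.}
  The delicate part is the reformulation step: one must track signs carefully through the successive integrations by parts and, above all, justify the cancellation of $\sum_{T\in\Th}\sum_{F\in\Fh[T]}(v_F,\GRAD w\SCAL\normal_{TF})_F$, which is precisely where the homogeneous Dirichlet condition built into $\UhD$ and the extra regularity $w\in H^2(\Omega)$ (needed for $\GRAD w$ to possess single-valued $L^2$-traces on the mesh faces) come into play. A secondary technical point is that~\eqref{eq:approx.trace} with $m=1$ controls only tangential face derivatives, so the \emph{normal} gradient of $\eproj[T]{k+1}w-w$ on a face must be estimated through the $\vlproj[T]{k}\GRAD w$ detour mentioned above rather than by a direct application of~\eqref{eq:approx.trace}.\qed
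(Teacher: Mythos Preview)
Your proof is correct and follows essentially the same route as the paper's: element-by-element integration by parts, the cancellation $\sum_{T}\sum_{F\in\Fh[T]}(v_F,\GRAD w\SCAL\normal_{TF})_F=0$ using $v_F\equiv 0$ on $\Fhb$ and the continuity of $\GRAD w\SCAL\normal_F$ across interfaces, and a Cauchy--Schwarz bound of the resulting face residual plus stabilization.

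Two small remarks on where you diverge. First, the volume term $(\GRAD v_T,\GRAD(\eproj[T]{k+1}w-w))_T$ that you keep and estimate is in fact identically zero: since $v_T\in\Poly{k}(T)\subset\Poly{k+1}(T)$, this is precisely the orthogonality relation~\eqref{eq:eproj:1} defining the elliptic projector. The paper uses this cancellation directly and arrives at a representation of ${\cal E}_h$ containing only the face and stabilization contributions. Second, for the face gradient the paper simply invokes~\eqref{eq:approx.trace} with $\alpha=1$, $l=k+1$, $s=k+2$, $m=1$ to get $h_T^{\nicefrac12}\norm[\partial T]{\GRAD(w-\eproj[T]{k+1}w)}\lesssim h_T^{k+1}\seminorm[H^{k+2}(T)]{w}$; your detour through $\vlproj[T]{k}\GRAD w$ combined with the discrete trace inequality is a perfectly valid alternative and sidesteps the tangential-versus-full-gradient concern you raise.
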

\begin{proof}
  \begin{asparaenum}[(i)]
  \item \emph{Stability.} Summing inequalities~\eqref{eq:stab.T} over $T\in\Th$,~\eqref{eq:stab.h} follows.
  \item \emph{Consistency.}
    Let $\uv[h]\in\UhD$ be such that $\norm[1,h]{\uv[h]}=1$.
    Throughout the proof, we abridge as $A\lesssim B$ the inequality $A\le cB$ with multiplicative constant $c>0$ having the same dependecies as $C$ in~\eqref{eq:consistency.h}.
    For the sake of brevity, we also let $\cw\eqbydef\pT\IT w=\eproj[T]{k+1}w$ (cf.~\eqref{eq:pT:commuting}) for all $T\in\Th$.
    Integrating by parts element-by-element, we infer that
    \begin{equation}\label{eq:ah:consistency:1}
      -(\LAPL w,v_h) = \sum_{T\in\Th}\left(
      (\GRAD w,\GRAD v_T)_T + \sum_{F\in\Fh[T]}(\GRAD w\SCAL\normal_{TF}, v_F-v_T)_F
      \right).
    \end{equation}
    To insert $v_F$ into the second term in parentheses in~\eqref{eq:ah:consistency:1}, we have used the fact that $v_F\equiv 0$ for all $F\in\Fhb$ while, for all $F\in\Fhi$ such that $F\subset\partial T_1\cap\partial T_2$ for distinct mesh elements $T_1,T_2\in\Th$,
    $\restrto{(\GRAD w)}{T_1}\SCAL\normal_{T_1F} + \restrto{(\GRAD w)}{T_2}\SCAL\normal_{T_2F} = 0$ (since $w\in H^{k+2}(\Omega)$), so that
    $$
    \sum_{T\in\Th}\sum_{F\in\Fh[T]}(\GRAD w\SCAL\normal_{TF},v_F)_F
    = \sum_{F\in\Fhi}(\sum_{T\in\Th[F]}\restrto{(\GRAD w)}{T}\SCAL\normal_{TF},v_F)_F
    + \sum_{F\in\Fhb}(\GRAD w\SCAL\normal, v_F)_F
    = 0.
    $$
    On the other hand, plugging the definition~\eqref{eq:aT} of $\mathrm{a}_T$ into~\eqref{eq:ah}, and expanding $\pT\uv$ according to~\eqref{eq:pT} with $w=\cw$, it is inferred that
    \begin{equation}\label{eq:ah:consistency:2}
      \mathrm{a}_h(\Ih w,\uv[h]) = \sum_{T\in\Th}\left(
      (\GRAD\cw,\GRAD v_T)_T
      + \sum_{F\in\Fh[T]}(\GRAD\cw\SCAL\normal_{TF}, v_F-v_T)_F + \mathrm{s}_T(\IT w,\uv)
      \right).
    \end{equation}
    Subtracting~\eqref{eq:ah:consistency:2} from~\eqref{eq:ah:consistency:1}, using the definition~\eqref{eq:eproj} of $\eproj[T]{k+1}$ to cancel the first terms in parentheses, and taking absolute values, we get
    $$
    \begin{aligned}
      |{\cal E}_h(w;\uv[h])|
      &=\left| \sum_{T\in\Th}\left(
      \sum_{F\in\Fh[T]}(\GRAD(w-\cw)\SCAL\normal_{TF}, v_F-v_T)_F + \mathrm{s}_T(\IT w,\uv)
      \right)
      \right|
      \\
      &\le\left[\sum_{T\in\Th}\left(
      h_T\norm[\partial T]{\GRAD(w-\cw)}^2 + \mathrm{s}_T(\IT w,\IT w)
      \right)
      \right]^{\nicefrac12}
      \\
      &\qquad\times\left[\sum_{T\in\Th}\left(
      \seminorm[1,\partial T]{\uv}^2 + \mathrm{s}_T(\uv,\uv)
      \right)\right]^{\nicefrac12}.
    \end{aligned}
    $$
    Using~\eqref{eq:approx.trace} with $\alpha=1$, $l=k+1$, $s=k+2$, and $m=1$ together with~\eqref{eq:approx.sT} for the first factor, and the seminorm equivalence~\eqref{eq:stab.T} together with the fact that $\norm[1,h]{\uv[h]}=1$ for the second, we infer the bound
    $$
    |{\cal E}_h(w;\uv[h])|\lesssim h^{k+1} \norm[H^{k+2}(\Omega)]{w}.
    $$
    Since $\uv[h]$ is arbitrary, this yields~\eqref{eq:consistency.h}.\qed
  \end{asparaenum}
\end{proof}
\subsubsection{Discrete problem and well-posedness}
The discrete problem reads:
Find $\uu[h]\in\UhD$ such that
\begin{equation}\label{eq:poisson:discrete}
  \mathrm{a}_h(\uu[h],\uv[h]) = (f,v_h)\qquad\forall\uv[h]\in\UhD.
\end{equation}

\begin{lemma}[Well-posedness]
  Problem~\eqref{eq:poisson:discrete} is well-posed, and we have the following a priori bound for the unique discrete solution $\uu[h]\in\UhD$:
  $$
  \norm[1,h]{\uu[h]}\le\eta C_{\rm P}\norm{f}.
  $$
\end{lemma}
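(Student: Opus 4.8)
The plan is to apply the Lax--Milgram lemma in a finite-dimensional setting, all computations being carried out on the subspace $\UhD$. First I would record that the bilinear form $\mathrm{a}_h$ is coercive on $\UhD\times\UhD$: by the lower bound in the stability estimate~\eqref{eq:stab.h} we have $\mathrm{a}_h(\uv[h],\uv[h])\ge\eta^{-1}\norm[1,h]{\uv[h]}^2$ for all $\uv[h]\in\UhD$, and we have just shown that $\norm[1,h]{{\cdot}}$ is a norm on $\UhD$. Hence $\mathrm{a}_h$ defines an inner product on the finite-dimensional space $\UhD$; in particular, the linear operator associated with $\mathrm{a}_h$ is injective, therefore bijective. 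Since $\uv[h]\mapsto(f,v_h)$ is a linear form on $\UhD$ (bounded, as $\UhD$ is finite-dimensional), existence and uniqueness of $\uu[h]\in\UhD$ solving~\eqref{eq:poisson:discrete} follow immediately.

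Next, for the a priori bound I would test~\eqref{eq:poisson:discrete} with $\uv[h]=\uu[h]$, which gives $\mathrm{a}_h(\uu[h],\uu[h])=(f,u_h)$. Using coercivity~\eqref{eq:stab.h} on the left-hand side and the Cauchy--Schwarz inequality followed by the discrete Poincar\'e inequality~\eqref{eq:poincare} on the right-hand side, one gets
$$
\eta^{-1}\norm[1,h]{\uu[h]}^2\le\mathrm{a}_h(\uu[h],\uu[h])=(f,u_h)\le\norm{f}\,\norm{u_h}\le C_{\rm P}\norm{f}\,\norm[1,h]{\uu[h]}.
$$
If $\uu[h]=\underline{0}_h$ the asserted estimate is trivial; otherwise, dividing through by $\eta^{-1}\norm[1,h]{\uu[h]}>0$ yields $\norm[1,h]{\uu[h]}\le\eta C_{\rm P}\norm{f}$, which also furnishes the continuous dependence on the data completing the claim of well-posedness.

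I do not expect any genuine obstacle: the argument only relies on two facts already established above, namely the coercivity of $\mathrm{a}_h$ on $\UhD$ (obtained by summing the local stability condition (S2) over the mesh) and the fact that $\norm[1,h]{{\cdot}}$ is an honest norm on $\UhD$ (which itself rests on the discrete Poincar\'e inequality~\eqref{eq:poincare}). The only point requiring mild attention is that coercivity is available only on the subspace $\UhD$ where the boundary face DOFs are frozen to zero, so the whole well-posedness discussion must be phrased on $\UhD$ rather than on the full space $\Uh$.
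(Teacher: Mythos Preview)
Your proposal is correct and follows essentially the same approach as the paper: invoke Lax--Milgram on the finite-dimensional space $\UhD$, using the coercivity of $\mathrm{a}_h$ from~\eqref{eq:stab.h} and the continuity of the linear form $\uv[h]\mapsto(f,v_h)$ via the discrete Poincar\'e inequality~\eqref{eq:poincare}. You actually spell out the derivation of the a priori bound (test with $\uu[h]$, apply Cauchy--Schwarz and Poincar\'e, divide through) in more detail than the paper, which simply records the coercivity and continuity constants and leaves the standard Lax--Milgram conclusion implicit.
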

\begin{proof}
  We check the assumptions of the Lax--Milgram lemma~\cite{Lax.Milgram:54}
  on the finite-dimensional space $\UhD$ equipped with the norm $\norm[1,h]{{\cdot}}$.
  The bilinear form $\mathrm{a}_h$ is coercive and continuous owing to~\eqref{eq:stab.h} with coercivity constant equal to $\eta^{-1}$.
  The linear form $\uv[h]\mapsto (f,v_h)$ is continuous owing to~\eqref{eq:poincare} with continuity constant equal to $C_{\rm P}$.
  \qed
\end{proof}

\subsubsection{Implementation}\label{sec:basics:local:implementation}

Let a basis $\mathcal{B}_h$ for the space $\UhD$ be fixed such that every basis function is supported by only one mesh element or face.
For a generic element $\uv[h]\in\UhD$, denote by $\sV$ the corresponding vector of coefficients in $\mathcal{B}_h$ partitioned as
$$\sV=\begin{pmat}[{}]\sV[\Th]\cr\- \sV[\Fh]\cr\end{pmat},$$
where the subvectors $\sV[\Th]$ and $\sV[\Fh]$ collect the coefficients associated to element-based and face-based DOFs, respectively.
Denote by $\sA$ the matrix representation of the bilinear form $\mathrm{a}_h$ and by $\sB$ the vector representation of the linear form $\uv[h]\mapsto (f,v_h)$, both partitioned in a similar way.
The algebraic problem corresponding to~\eqref{eq:poisson:discrete} reads
\begin{equation}\label{eq:poisson:algebraic}
  \underbrace{\begin{pmat}[{|}]
      \sA[\Th\Th] & \sA[\Th\Fh] \cr\-
      \sA[\Th\Fh]\trans & \sA[\Fh\Fh] \cr
  \end{pmat}}_{\sA}
  \underbrace{\begin{pmat}[{}]
      \sU[\Th]\cr\-\sU[\Fh]\vphantom{\sU[\Fh]\trans}\cr
  \end{pmat}}_{\sU}=
  \underbrace{
  \begin{pmat}[{}]
      \sB[\Th]\cr\-\mathsf{0}_{\Fh}\vphantom{\sU[\Fh]\trans}\cr
    \end{pmat}}_{\sB}.
\end{equation}
The submatrix $\sA[\Th\Th]$ is block-diagonal and symmetric positive definite, and is therefore inexpensive to invert.
In the practical implementation, this remark can be exploited by solving the linear system~\eqref{eq:poisson:algebraic} in two steps (see, e.g.,~\cite[Section 2.4]{Cockburn.Di-Pietro.ea:16}):
\begin{subequations}
  \begin{enumerate}[(i)]
  \item First, element-based coefficients in $\sU[\Th]$ are expressed in terms of $\sB[\Th]$ and $\sU[\Fh]$ by the inexpensive solution of the first block equation:
    \begin{equation}\label{eq:static.cond:1}
      \sU[\Th]=\sA[\Th\Th]^{-1}\left(
      \sB[\Th] - \sA[\Th\Fh]\sU[\Fh]
      \right).
    \end{equation}      
    This step is referred to as {\em static condensation} in the finite element literature;
  \item Second, face-based coefficients in $\sU[\Fh]$ are obtained solving the global skeletal (i.e., involving unknowns attached to the mesh skeleton) problem
    \begin{equation}\label{eq:static.cond:2}
      \left(\sA[\Fh\Fh]-\sA[\Th\Fh]\trans\sA[\Th\Th]^{-1}\sA[\Th\Fh]\right)
      \sU[\Fh]
      =
      \sA[\Th\Fh]\trans\sA[\Th\Th]^{-1}\sB[\Th].
    \end{equation}
    This computationally more intensive step requires to invert the matrix in parentheses in the above expression.
    This symmetric positive definite matrix, whose stencil is the same as that of $\sA[\Fh\Fh]$ and only involves neighbours through faces, has size $N_{\rm dof}\times N_{\rm dof}$ with
    \begin{equation}\label{eq:Ndof}
      N_{\rm dof} = \card(\Fhi) \times {k + d - 1\choose k}.
    \end{equation}
  \end{enumerate}
\end{subequations}

\subsubsection{Local conservation and flux continuity}

At the continuous level, the solution of problem~\eqref{eq:poisson:weak} satisfies the following local balance for all $T\in\Th$ and all $v_T\in\Poly{k}(T)$:
\begin{subequations}\label{eq:balance.equilibrium}
  \begin{equation}\label{eq:balance}
    (\GRAD u,\GRAD v_T)_T - \sum_{F\in\Fh[T]}(\GRAD u\SCAL\normal_{TF}, v_T)_F = (f, v_T)_T,
  \end{equation}
  and the normal flux traces are continuous in the sense that, for all $F\in\Fhi$ such that $F\subset\partial T_1\cap \partial T_2$ with distinct mesh elements $T_1,T_2\in\Th$, it holds (see, e.g.,~\cite[Lemma~4.3]{Di-Pietro.Ern:12})
  \begin{equation}\label{eq:equilibrium}
    \restrto{(\GRAD u)}{T_1}\SCAL\normal_{T_1F} + \restrto{(\GRAD u)}{T_2}\SCAL\normal_{T_2F}=0.
  \end{equation}
\end{subequations}
We show in this section that a discrete counterpart of the relations~\eqref{eq:balance.equilibrium} holds for the discrete solution.
This property is relevant both from the engineering and mathematical points of view, and it can be exploited to derive a posteriori error estimators by flux equilibration.
It was originally highlighted in~\cite{Di-Pietro.Ern:15*1} and, using different techniques, in~\cite{Cockburn.Di-Pietro.ea:16} for the stabilization bilinear form $\mathrm{s}_T$ defined by~\eqref{eq:sT.hho}.
Here, using yet a different approach, we extend these results to more general stabilization bilinear forms.

Let a mesh element $T\in\Th$ be fixed.
We define the space
\begin{equation}\label{eq:DT}
  \DT\eqbydef\bigtimes_{F\in\Fh[T]}\Poly{k}(F),
\end{equation}
as well as the boundary difference operator $\DpT:\UT\to\DT$ such that, for all $\uv\in\UT$,
\begin{equation}\label{eq:DpT}
\DpT\uv = (\Delta_{TF}^k\uv)_{F\in\Fh[T]} \eqbydef (v_F - \restrto{v_T}{F})_{F\in\Fh[T]}.
\end{equation}
A useful remark is that, for all $\uv\in\UT$, it holds
\begin{equation}\label{eq:flux.magic}
  \uv-\IT v_T
  = (v_T - \lproj[T]{k}v_T, (v_F - \lproj[F]{k}\restrto{v_T}{F})_{F\in\Fh[T]})
  = (0,\DpT\uv),
\end{equation}
where the conclusion follows observing that, for all $T\in\Th$ and all $F\in\Fh[T]$, $\lproj[T]{k}v_T=v_T$ and $\lproj[F]{k}\restrto{v_T}{F}=\restrto{v_T}{F}$ since $v_T\in\Poly{k}(T)$ and $\restrto{v_T}{F}\in\Poly{k}(F)$.

We show in the next proposition that any stabilization bilinear form with a suitable dependence on its arguments can be reformulated in terms of boundary differences.
\begin{proposition}[Reformulation of the stabilization bilinear form]\label{prop:sT'}
  Let $T\in\Th$, and assume that $\mathrm{s}_T$ is a stabilization bilinear form that satisfies assumptions (S1)--(S3) and that depends on its arguments only through the residuals defined by~\eqref{eq:residuals}.
  Then, it holds for all $\uu,\uv\in\UT$ that
  \begin{equation}\label{eq:sT'}
    \mathrm{s}_T(\uu,\uv) =
    \mathrm{s}_T((0,\DpT\uu),(0,\DpT\uv)).
  \end{equation}
\end{proposition}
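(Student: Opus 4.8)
The statement says that if $\mathrm{s}_T$ depends on its arguments only through the residuals $\delta_T^k$ and $\delta_{TF}^k$ of~\eqref{eq:residuals}, then $\mathrm{s}_T(\uu,\uv) = \mathrm{s}_T((0,\DpT\uu),(0,\DpT\uv))$. The natural approach is to show that the residuals are \emph{invariant} under the map $\uv\mapsto(0,\DpT\uv)$; once this is established, the conclusion is immediate since $\mathrm{s}_T$ sees its arguments only through those residuals.

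\textbf{Key steps.} First I would use the decomposition~\eqref{eq:flux.magic}, which reads $\uv - \IT v_T = (0,\DpT\uv)$, equivalently $\uv = \IT v_T + (0,\DpT\uv)$. Since the residual operators $\delta_T^k$ and $\delta_{TF}^k$ are linear in $\uv$ (they are built from the linear operator $\pT$ and the linear $L^2$-projectors), I get $\delta_T^k\uv = \delta_T^k\IT v_T + \delta_T^k(0,\DpT\uv)$ and similarly for $\delta_{TF}^k\uv$. Now the first term vanishes: $v_T\in\Poly{k}(T)\subset\Poly{k+1}(T)$, so by the polynomial-consistency computation already carried out in the text (just after~\eqref{eq:residuals}), $\delta_T^k\IT w = 0$ and $\delta_{TF}^k\IT w = 0$ for any $w\in\Poly{k+1}(T)$, in particular for $w=v_T$. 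Hence $\delta_T^k\uv = \delta_T^k(0,\DpT\uv)$ and $\delta_{TF}^k\uv = \delta_{TF}^k(0,\DpT\uv)$ for every $F\in\Fh[T]$; that is, the residuals of $\uv$ and of $(0,\DpT\uv)$ coincide. By the assumed dependence of $\mathrm{s}_T$ on its arguments only through these residuals, $\mathrm{s}_T(\uu,\uv)$ and $\mathrm{s}_T((0,\DpT\uu),(0,\DpT\uv))$ are equal, which is~\eqref{eq:sT'}.

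\textbf{Main obstacle.} There is no serious analytic difficulty here; the proof is essentially an algebraic bookkeeping exercise resting on~\eqref{eq:flux.magic} and on the polynomial consistency of the residuals. The one point requiring a little care is making the phrase ``depends on its arguments only through the residuals'' precise: formally one assumes there exists a bilinear form $\widetilde{\mathrm{s}}_T$ on $\Poly{k}(T)\times\bigl(\bigtimes_{F\in\Fh[T]}\Poly{k}(F)\bigr)$ such that $\mathrm{s}_T(\uu,\uv) = \widetilde{\mathrm{s}}_T\bigl((\delta_T^k\uu,(\delta_{TF}^k\uu)_{F\in\Fh[T]}),(\delta_T^k\uv,(\delta_{TF}^k\uv)_{F\in\Fh[T]})\bigr)$, and then the invariance of the residual tuple under $\uv\mapsto(0,\DpT\uv)$ gives the result directly. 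Both example stabilizations~\eqref{eq:sT.vem} and~\eqref{eq:sT.hho} are visibly of this form, so the hypothesis is not vacuous.
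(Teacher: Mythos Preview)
Your proposal is correct and follows essentially the same route as the paper: both arguments reduce to showing that the residuals $\delta_T^k$ and $\delta_{TF}^k$ are unchanged under $\uv\mapsto(0,\DpT\uv)$, using~\eqref{eq:flux.magic} together with the fact that the residuals vanish on $\IT v_T$ since $v_T\in\Poly{k}(T)\subset\Poly{k+1}(T)$. The only cosmetic difference is that the paper unfolds the definitions of $\delta_T^k$ and $\delta_{TF}^k$ and manipulates them directly, whereas you invoke linearity plus the polynomial-consistency property already established in the text; the two presentations are logically equivalent.
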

\begin{proof}
  It suffices to show that, for all $\uv\in\UT$,
  $$
  \delta_T^k\uv=\delta_T^k(0,\DpT\uv),\qquad
  \delta_{TF}^k\uv=\delta_{TF}^k(0,\DpT\uv)\quad\forall F\in\Fh[T].
  $$
  Let us start by $\delta_T^k$.
  Since $v_T\in\Poly{k}(T)$, $\pT\IT v_T = \eproj[T]{k+1}v_T = v_T$.
  Hence,
  $$
  \begin{aligned}
    \delta_T^k\uv
    &= \lproj[T]{k}(\pT\uv - v_T)
    \\
    &= \lproj[T]{k}(\pT\uv - \pT\IT v_T)
    \\
    &= \lproj[T]{k}\pT(\uv - \IT v_T)
    = \delta_T^k(0,\DpT\uv),
  \end{aligned}
  $$
  where we have used the linearity of $\pT$ to pass to the third line and~\eqref{eq:flux.magic} to conclude.
  Let now $F\in\Fh[T]$ and consider $\delta_{TF}^k$. We have
  $$
  \begin{aligned}
    \delta_{TF}^k\uv
    &= \lproj[F]{k}(\pT\uv - v_F)
    \\
    &= \lproj[F]{k}(\pT\uv - \pT\IT v_T + v_T - v_F)
    \\
    &= \lproj[F]{k}(\pT(0,\DpT\uv) - \Delta_{TF}^k\uv)
    = \delta_{TF}^k(0,\DpT\uv),
  \end{aligned}
  $$
  where we have introduced $v_T - \pT\IT v_T = 0$ in the second line, used the linearity of $\pT$ together with~\eqref{eq:flux.magic} and the definition~\eqref{eq:DT} of $\DpT$ in the third line, and concluded recalling the definition~\eqref{eq:residuals} of $\delta_{TF}^k$. \qed
\end{proof}
Define the boundary residual operator $\RpT:\UT\to\DT$ such that, for all $\uv\in\UT$, $\RpT\uv = (R_{TF}^k\uv)_{F\in\Fh[T]}$ satisfies for all $\ual=(\alpha_{TF})_{F\in\Fh[T]}\in\DT$
\begin{equation}\label{eq:RpT}
  -\sum_{F\in\Fh[T]}(R_{TF}^k\uv,\alpha_{TF})_F = \mathrm{s}_T((0,\DpT\uv),(0,\ual)).
\end{equation}
Problem~\eqref{eq:RpT} is well-posed, and computing $R_{TF}^k\uv$ requires to invert the boundary mass matrix.

\begin{svgraybox}
  \begin{lemma}[Local balance and flux continuity]\label{lem:balance.equilibrium.disc}
    Under the assumptions of Proposition~\ref{prop:sT'}, denote by $\uu[h]\in\UhD$ the unique solution of problem~\eqref{eq:poisson:discrete} and, for all $T\in\Th$ and all $F\in\Fh[T]$, define the numerical trace of the flux
    $$
    S_{TF}(\uu[T])
    \eqbydef -\GRAD\pT\uu[T]\SCAL\normal_{TF}
    + R_{TF}^k\uu[T]
    $$
    with $R_{TF}^k$ defined by~\eqref{eq:RpT}.
    Then, for all $T\in\Th$ we have the following discrete counterpart of the local balance~\eqref{eq:balance}:
    For all $v_T\in\Poly{k}(T)$,
    \begin{subequations}\label{eq:balance.equilibrium.disc}
      \begin{equation}\label{eq:balance.disc}
        (\GRAD\pT\uu,\GRAD v_T)_T + \sum_{F\in\Fh[T]} (S_{TF}(\uu[T]), v_T)_F
        = (f,v_T)_T,
      \end{equation}
      and, for any interface $F\in\Fhi$ such that $F\subset\partial T_1\cap\partial T_2$ with distinct mesh elements $T_1,T_2\in\Th$, the numerical fluxes are continuous in the sense that (compare with~\eqref{eq:equilibrium}):
      \begin{equation}\label{eq:equilibrium.disc}
        S_{T_1F}(\uu[T_1]) + S_{T_2F}(\uu[T_2])=0.
      \end{equation}
    \end{subequations}
  \end{lemma}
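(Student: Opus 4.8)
The plan is to derive both identities by testing the discrete problem~\eqref{eq:poisson:discrete} against cleverly chosen elements of $\UhD$ and exploiting the reformulation of $\mathrm{s}_T$ from Proposition~\ref{prop:sT'}. First I would fix $T\in\Th$ and $v_T\in\Poly{k}(T)$, and consider the element $\uv[h]\in\UhD$ whose only nonzero component is the element-based DOF $v_T$ on $T$ (all face DOFs and all other element DOFs set to zero); call its restriction $\uv[T]=(v_T,(0)_{F\in\Fh[T]})\in\UT$. Plugging this into~\eqref{eq:poisson:discrete} kills every local contribution except the one on $T$, leaving $\mathrm{a}_T(\uu[T],\uv[T])=(f,v_T)_T$. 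By the definition~\eqref{eq:aT} this reads $(\GRAD\pT\uu[T],\GRAD\pT\uv[T])_T+\mathrm{s}_T(\uu[T],\uv[T])=(f,v_T)_T$.

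Next I would identify the two terms on the left with the terms appearing in~\eqref{eq:balance.disc}. For the Galerkin term, I use the definition~\eqref{eq:pT:1} of $\pT$ with the test polynomial $w=\pT\uv[T]\in\Poly{k+1}(T)$: since the face DOFs of $\uv[T]$ vanish, \eqref{eq:pT:1} gives $(\GRAD\pT\uu[T],\GRAD\pT\uv[T])_T=-(v_T,\LAPL\pT\uv[T])_T+\sum_{F}(0,\dots)_F$. Hmm — that is the wrong direction; instead I should apply~\eqref{eq:pT:1} with the roles reversed, i.e.\ use $\pT\uu[T]$ as the reconstruction and integrate by parts the \emph{other} factor. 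Concretely, integrate by parts on $T$: $(\GRAD\pT\uu[T],\GRAD v_T)_T=-(\LAPL\pT\uu[T],v_T)_T+\sum_{F\in\Fh[T]}(\GRAD\pT\uu[T]\SCAL\normal_{TF},v_T)_F$, but in fact the cleanest route is: since $v_T\in\Poly{k+1}(T)$, \eqref{eq:pT:1} applied to $\uu[T]$ with $w=v_T$ gives directly $(\GRAD\pT\uu[T],\GRAD v_T)_T=-(u_T,\LAPL v_T)_T+\sum_{F\in\Fh[T]}(u_F,\GRAD v_T\SCAL\normal_{TF})_F$. Then integrating $(\GRAD\pT\uu[T],\GRAD v_T)_T$ by parts the ordinary way and subtracting yields an expression for $\sum_F(-\GRAD\pT\uu[T]\SCAL\normal_{TF},v_T)_F$ in terms of the DOFs. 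Meanwhile, for the stabilization term, Proposition~\ref{prop:sT'} gives $\mathrm{s}_T(\uu[T],\uv[T])=\mathrm{s}_T((0,\DpT\uu[T]),(0,\DpT\uv[T]))$; but $\DpT\uv[T]=(v_F-\restrto{v_T}{F})_F=(-\restrto{v_T}{F})_F$, so by the definition~\eqref{eq:RpT} of $\RpT$ with $\ual=(-\restrto{v_T}{F})_F$ we get $\mathrm{s}_T(\uu[T],\uv[T])=\sum_{F\in\Fh[T]}(R_{TF}^k\uu[T],\restrto{v_T}{F})_F$. Collecting these, the equation $\mathrm{a}_T(\uu[T],\uv[T])=(f,v_T)_T$ becomes exactly~\eqref{eq:balance.disc} with $S_{TF}(\uu[T])=-\GRAD\pT\uu[T]\SCAL\normal_{TF}+R_{TF}^k\uu[T]$, after checking the boundary terms match — which requires using that $\restrto{(\GRAD\pT\uu[T])}{F}\SCAL\normal_{TF}$ and $R_{TF}^k\uu[T]$ may be tested only against $\Poly{k}(F)$, but $\restrto{v_T}{F}\in\Poly{k}(F)$ so that is fine.

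For the flux-continuity identity~\eqref{eq:equilibrium.disc}, I would test~\eqref{eq:poisson:discrete} against the element $\uv[h]\in\UhD$ supported by a single interface $F\in\Fhi$, $F\subset\partial T_1\cap\partial T_2$: all element DOFs zero, all face DOFs zero except $v_F\in\Poly{k}(F)$ arbitrary. Then $(f,v_h)=0$, and only the two local contributions from $T_1$ and $T_2$ survive, so $\mathrm{a}_{T_1}(\uu[T_1],\uv[T_1])+\mathrm{a}_{T_2}(\uu[T_2],\uv[T_2])=0$. Expanding each $\mathrm{a}_{T_i}$ as above — using~\eqref{eq:pT:1} for the Galerkin part (now the face DOF is the only nonzero one, so $(\GRAD\pT\uu[T_i],\GRAD\pT\uv[T_i])_{T_i}$ reduces, after using~\eqref{eq:pT:1} once more, to $\sum_{F'\in\Fh[T_i]}(\GRAD\pT\uu[T_i]\SCAL\normal_{T_iF'},v_{F'})_{F'}=(\GRAD\pT\uu[T_i]\SCAL\normal_{T_iF},v_F)_F$) and Proposition~\ref{prop:sT'} together with~\eqref{eq:RpT} for the stabilization part (giving $-\sum(R_{T_iF'}^k\uu[T_i],\dots)$, i.e.\ $-(R_{T_iF}^k\uu[T_i],v_F)_F$ since only the $F$-component of $\DpT\uv[T_i]$ is nonzero) — one finds $\sum_{i=1}^{2}(-\GRAD\pT\uu[T_i]\SCAL\normal_{T_iF}+R_{T_iF}^k\uu[T_i],v_F)_F=0$, i.e.\ $(S_{T_1F}(\uu[T_1])+S_{T_2F}(\uu[T_2]),v_F)_F=0$ for all $v_F\in\Poly{k}(F)$. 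Since $S_{TF}(\uu[T])\in\Poly{k}(F)$ (it is the sum of an $L^2$-projectable trace and $R_{TF}^k\uu[T]\in\Poly{k}(F)$), taking $v_F=S_{T_1F}(\uu[T_1])+S_{T_2F}(\uu[T_2])$ forces~\eqref{eq:equilibrium.disc}.

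I expect the main obstacle to be the careful bookkeeping of which quantities are tested against which polynomial spaces — in particular, showing that $S_{TF}(\uu[T])\in\Poly{k}(F)$ so that the integral identities against $\Poly{k}(F)$ can be upgraded to pointwise (a.e.) equalities, and making sure the boundary terms coming from the two integration-by-parts manipulations (the ``raw'' one on $T$ and the one encoded in~\eqref{eq:pT:1}) are combined correctly so that $\GRAD\pT\uu[T]\SCAL\normal_{TF}$ rather than $\GRAD u\SCAL\normal_{TF}$ appears. Everything else is a direct consequence of the definitions~\eqref{eq:aT}, \eqref{eq:pT}, \eqref{eq:RpT}, the reformulation~\eqref{eq:sT'}, and linearity.
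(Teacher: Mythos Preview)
Your plan is correct and follows the same route as the paper: test~\eqref{eq:poisson:discrete} against localized $\uv[h]$ (element-supported for~\eqref{eq:balance.disc}, face-supported for~\eqref{eq:equilibrium.disc}), and use Proposition~\ref{prop:sT'} with~\eqref{eq:RpT} to rewrite the stabilization as $\mathrm{s}_T(\uu,\uv)=-\sum_{F\in\Fh[T]}(R_{TF}^k\uu,v_F-v_T)_F$.

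The one place where you tie yourself in knots is the Galerkin term. The clean move---which is what the paper does---is to apply~\eqref{eq:pT:1} to $\uv$ (not to $\uu$) with test polynomial $w=\pT\uu$. This gives, for \emph{any} $\uv\in\UT$,
\[
(\GRAD\pT\uu,\GRAD\pT\uv)_T
= -(v_T,\LAPL\pT\uu)_T + \sum_{F\in\Fh[T]}(v_F,\GRAD\pT\uu\SCAL\normal_{TF})_F,
\]
and a single ordinary integration by parts on the first term yields
\[
(\GRAD\pT\uu,\GRAD\pT\uv)_T
= (\GRAD\pT\uu,\GRAD v_T)_T + \sum_{F\in\Fh[T]}(\GRAD\pT\uu\SCAL\normal_{TF},v_F-v_T)_F.
\]
Adding the stabilization in its $(v_F-v_T)$ form gives the unified identity
\[
\sum_{T\in\Th}\bigg((\GRAD\pT\uu,\GRAD v_T)_T+\sum_{F\in\Fh[T]}(\GRAD\pT\uu\SCAL\normal_{TF}-R_{TF}^k\uu,\,v_F-v_T)_F\bigg)=(f,v_h),
\]
from which both~\eqref{eq:balance.disc} and~\eqref{eq:equilibrium.disc} drop out by your choice of localized $\uv[h]$. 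This avoids the detour through ``two integration-by-parts manipulations'' and the subtraction you were worried about. Your observation that $S_{TF}(\uu[T])\in\Poly{k}(F)$ (since $\pT\uu\in\Poly{k+1}(T)$ and $R_{TF}^k\uu\in\Poly{k}(F)$) is correct and is what upgrades the weak identity on $F$ to the pointwise one in~\eqref{eq:equilibrium.disc}.
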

\end{svgraybox}
\begin{proof}
  Let $\uv[h]\in\UhD$.
  Plugging the definition~\eqref{eq:aT} of $\mathrm{a}_T$ into~\eqref{eq:ah}, using for all $T\in\Th$ the definition of $\pT\uv[T]$ with $w=\pT\uu[T]$, and recalling the reformulation~\eqref{eq:sT'} of $\mathrm{s}_T$ together with the definition~\eqref{eq:RpT} of $\RpT$ to write
  \begin{equation}\label{eq:sT.RTF}
    \mathrm{s}_T(\uu,\uv)
    = -\sum_{F\in\Fh[T]}(R_{TF}^k\uu,v_F-v_T)_F\qquad\forall T\in\Th,
  \end{equation}
  we infer from the discrete problem~\eqref{eq:poisson:discrete} that
  $$
  \sum_{T\in\Th}\left(
  (\GRAD\pT\uu,\GRAD v_T)_T
  + \sum_{F\in\Fh[T]}(\GRAD\pT\uu\SCAL\normal_{TF} - R_{TF}^k\uu,v_F-v_T)_F
  \right)=(f,v_h).
  $$
  Selecting $\uv[h]$ such that $v_T$ spans $\Poly{k}(T)$ for a selected mesh element $T\in\Th$ while $v_{T'}\equiv 0$ for all $T'\in\Th\setminus\{T\}$ and $v_F\equiv 0$ for all $F\in\Fh$, we obtain~\eqref{eq:balance.disc}.
  On the other hand, selecting $\uv[h]$ such that $v_T\equiv 0$ for all $T\in\Th$, $v_F$ spans $\Poly{k}(F)$ for a selected interface $F\in\Fhi$ such that $F\subset\partial T_1\cap\partial T_2$ for distinct mesh elements $T_1,T_2\in\Th$, and $v_{F'}\equiv 0$ for all $F'\in\Fh\setminus\{F\}$ yields~\eqref{eq:equilibrium.disc}.
\end{proof}

\begin{remark}[Interpretation of the discrete problem]
  Lemma~\ref{lem:balance.equilibrium.disc} and its proof provide further insight into the structure of the discrete problem~\eqref{eq:poisson:discrete}, which consists of the local balances~\eqref{eq:balance.disc} (corresponding to the local block equations~\eqref{eq:static.cond:1}) and a global transmission condition enforcing the continuity~\eqref{eq:equilibrium.disc} of numerical fluxes (corresponding to the global skeletal problem~\eqref{eq:static.cond:2}).
\end{remark}

\subsection{A priori error analysis}\label{sec:basics:apriori.error.analysis}

Having proved that the discrete problem~\eqref{eq:poisson:discrete} is well-posed, it remains to determine the convergence of the discrete solution towards the exact solution, which is precisely the goal of this section.

\subsubsection{Energy error estimate}

We start by deriving a basic convergence result. The error is measured as the difference between the exact solution and the global reconstruction obtained from the discrete solution through the operator $\ph:\Uh\to\Poly{k+1}(\Th)$ such that, for all $\uv[h]\in\Uh$,
\begin{equation}\label{eq:rh}
  \restrto{(\ph\uv[h])}{T}\eqbydef\pT\uv\qquad\forall T\in\Th.
\end{equation}

\begin{svgraybox}
  \begin{theorem}[Energy error estimate]\label{thm:poisson:en.err.est}
    Let a polynomial degree $k\ge 0$ be fixed.
    Let $u\in H_0^1(\Omega)$ denote the unique solution to~\eqref{eq:poisson:weak}, for which we assume the additional regularity $u\in H^{k+2}(\Omega)$.
    Let $\uu[h]\in\UhD$ denote the unique solution to~\eqref{eq:poisson:discrete} with stabilization bilinear form $\mathrm{s}_T$ in~\eqref{eq:aT} satisfying assumptions (S1)--(S3) for all $T\in\Th$.
    Then, there exists a real number $C>0$ independent of $h$, but possibly depending on $d$, $\varrho$, and $k$, such that
    \begin{equation}\label{eq:poisson:en.err.est}
      \norm{\GRADh(\ph\uu[h]-u)} + \seminorm[\mathrm{s},h]{\uu[h]}\le C h^{k+1}\norm[H^{k+2}(\Omega)]{u},
    \end{equation}
    where $\seminorm[\mathrm{s},h]{{\cdot}}$ is the seminorm defined by the bilinear form $\mathrm{s}_h$ on $\Uh$.
  \end{theorem}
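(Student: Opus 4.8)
The plan is to run the standard ``consistency + stability $\Rightarrow$ error estimate'' machinery, which here amounts to the following. First I would introduce the \emph{projection of the error} $\uhu[h]\eqbydef\Ih u - \uu[h]\in\UhD$ and observe that, since $\pT\IT u = \eproj[T]{k+1}u$, controlling $\norm{\GRADh(\ph\uu[h]-u)}$ splits into the approximation-error part $\norm{\GRADh(\eproj[h]{k+1}u - u)}$, which is $O(h^{k+1})\norm[H^{k+2}(\Omega)]{u}$ directly by the optimal approximation properties~\eqref{eq:approx} of the elliptic projector (with $\alpha=1$, $l=k+1$, $s=k+2$, $m=1$, summed over $T\in\Th$), and the discrete part $\norm{\GRADh\ph\uhu[h]}$. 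The latter, together with $\seminorm[\mathrm{s},h]{\uhu[h]}$, is bounded by $\norm[\mathrm{a},h]{\uhu[h]} = \mathrm{a}_h(\uhu[h],\uhu[h])^{\nicefrac12}$ using the definition~\eqref{eq:aT}--\eqref{eq:ah} of $\mathrm{a}_h$. Then a triangle inequality on $\seminorm[\mathrm{s},h]{\uu[h]}\le\seminorm[\mathrm{s},h]{\Ih u} + \seminorm[\mathrm{s},h]{\uhu[h]}$ reduces the stabilization seminorm of the discrete solution to $\seminorm[\mathrm{s},h]{\uhu[h]}$ plus the term $\seminorm[\mathrm{s},h]{\Ih u} = \big(\sum_T\mathrm{s}_T(\IT u,\IT u)\big)^{\nicefrac12}$, which is $O(h^{k+1})\norm[H^{k+2}(\Omega)]{u}$ by Proposition~\ref{prop:approx.sT} summed over the elements.

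So everything comes down to estimating $\norm[\mathrm{a},h]{\uhu[h]}$. The key algebraic step is: by coercivity~\eqref{eq:stab.h} and then the definition of $\uhu[h]$ together with the discrete equation~\eqref{eq:poisson:discrete},
\begin{align*}
  \norm[\mathrm{a},h]{\uhu[h]}^2
  &= \mathrm{a}_h(\Ih u,\uhu[h]) - \mathrm{a}_h(\uu[h],\uhu[h])
  \\
  &= \mathrm{a}_h(\Ih u,\uhu[h]) - (f,\hat u_h)
  = \mathrm{a}_h(\Ih u,\uhu[h]) + (\LAPL u,\hat u_h)
  = -{\cal E}_h(u;\uhu[h]),
\end{align*}
where I used $-\LAPL u = f$ and the definition~\eqref{eq:Eh} of the conformity error ${\cal E}_h$. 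Now apply the consistency estimate~\eqref{eq:consistency.h}: normalising by $\norm[1,h]{\uhu[h]}$ and using $u\in H_0^1(\Omega)\cap H^{k+2}(\Omega)$ gives $|{\cal E}_h(u;\uhu[h])|\le C h^{k+1}\norm[H^{k+2}(\Omega)]{u}\,\norm[1,h]{\uhu[h]}$, and since $\norm[1,h]{\uhu[h]}^2\le\eta\,\norm[\mathrm{a},h]{\uhu[h]}^2$ again by~\eqref{eq:stab.h}, we divide through by $\norm[\mathrm{a},h]{\uhu[h]}$ to get $\norm[\mathrm{a},h]{\uhu[h]}\le C\eta^{\nicefrac12} h^{k+1}\norm[H^{k+2}(\Omega)]{u}$. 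Combining this with the two approximation-error contributions above and renaming the constant yields~\eqref{eq:poisson:en.err.est}.

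The only genuinely nontrivial ingredients are the consistency estimate~\eqref{eq:consistency.h} and the stabilization-consistency bound of Proposition~\ref{prop:approx.sT}, both of which are already proved in the excerpt, so the main obstacle here is purely bookkeeping: keeping the two halves of the error (the projection error $\eproj[h]{k+1}u - u$, handled by pure approximation theory, and the discrete error $\uhu[h]$, handled by coercivity plus consistency) cleanly separated, and making sure the stabilization seminorm $\seminorm[\mathrm{s},h]{\uu[h]}$ on the left-hand side is routed through the triangle inequality to $\seminorm[\mathrm{s},h]{\uhu[h]}\le\norm[\mathrm{a},h]{\uhu[h]}$ plus $\seminorm[\mathrm{s},h]{\Ih u}$ rather than attempting to bound it directly. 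One should also note in passing that $\mathrm{a}_h(\uv[h],\uv[h])\ge\seminorm[\mathrm{s},h]{\uv[h]}^2$ because the Galerkin term is nonnegative, which is what licenses $\seminorm[\mathrm{s},h]{\uhu[h]}\le\norm[\mathrm{a},h]{\uhu[h]}$.
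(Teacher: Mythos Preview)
Your proof is correct and follows essentially the same approach as the paper. Both split the left-hand side via the interpolant $\Ih u$ into a discrete-error part controlled by $\norm[\mathrm{a},h]{\Ih u-\uu[h]}$ (handled via the identity $\norm[\mathrm{a},h]{\Ih u-\uu[h]}^2=\pm{\cal E}_h(u;\,\cdot\,)$ and the consistency bound~\eqref{eq:consistency.h}) and an approximation-error part handled by the elliptic-projector estimates~\eqref{eq:approx} together with Proposition~\ref{prop:approx.sT}; the only cosmetic difference is that the paper bundles $\norm{\GRADh(\cu[h]-u)}$ and $\seminorm[\mathrm{s},h]{\Ih u}$ into a single term $\term_2$ via Cauchy--Schwarz, whereas you keep them separate.
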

\end{svgraybox}
\begin{proof}
  Let, for the sake of brevity, $\uhu\eqbydef\Ih u$ and $\cu[h]\eqbydef\ph\uhu$.
  We abridge as $A\lesssim B$ the inequality $A\le cB$ with multiplicative constant $c>0$ having the same dependencies as $C$ in~\eqref{eq:poisson:en.err.est}.
  Using the triangle and Cauchy--Schwarz inequalities, it is readily inferred that
  \begin{equation}\label{eq:en.err.est:basic}
    \norm{\GRADh(\ph\uu[h]-u)} + \seminorm[\mathrm{s},h]{\uu[h]}
    \le
    \underbrace{\vphantom{\Big(}\norm[\mathrm{a},h]{\uu[h]-\uhu}}_{\term_1} +
    \underbrace{\Big(
      \norm{\GRADh(\cu[h]-u)}^2 + \seminorm[\mathrm{s},h]{\uhu}^2.
      \Big)^{\nicefrac12}}_{\term_2}.
  \end{equation}
  We have that
  $$
  \begin{aligned}
    \term_1^2 
    &= \mathrm{a}_h(\uu[h],\uu[h]-\uhu[h]) - \mathrm{a}_h(\uhu[h],\uu[h]-\uhu[h])
    \\
    &= (f,u_h-\hat{u}_h) - \mathrm{a}_h(\uhu[h],\uu[h]-\uhu[h])
    = {\cal E}_h(u;\uu[h]-\uhu[h]),
  \end{aligned}
  $$
  where we have used the definition~\eqref{eq:stab.h} of the $\norm[\mathrm{a},h]{{\cdot}}$-norm together with the linearity of $\mathrm{a}_h$ in its first argument in the first line, the discrete problem~\eqref{eq:poisson:discrete} to pass to the second line, and the definition~\eqref{eq:Eh} of the conformity error to conclude.
  As a consequence, assuming $\uu[h]\neq\uhu[h]$ (the other case is trivial), we have that
  $$
  |\term_1|
  \le {\cal E}_h\left(u;\frac{\uu[h]-\uhu[h]}{\norm[\mathrm{a},h]{\uu[h]-\uhu[h]}}\right)
  \le \eta^{\nicefrac12}{\cal E}_h\left(u;\frac{\uu[h]-\uhu[h]}{\norm[1,h]{\uu[h]-\uhu[h]}}\right)
  \le\eta^{\nicefrac12}\hspace{-1em}\sup_{\uv[h]\in\UhD,\norm[1,h]{\uv[h]}=1}\hspace{-1em}{\cal E}_h(u;\uv[h]),
  $$
  where we have used the linearity of ${\cal E}_h(u;\cdot)$, the first bound in~\eqref{eq:stab.h}, and a passage to the supremum to conclude.
  Recalling~\eqref{eq:consistency.h}, we arrive at
  \begin{equation}\label{eq:en.err.est:T1}
    |\term_1|\lesssim h^{k+1}\norm[H^{k+2}(\Omega)]{u}.
  \end{equation}
  On the other hand, using the approximation properties~\eqref{eq:approx.approx.trace} of $\cu$ with $\alpha=1$, $l=k+1$, $s=k+2$, and $m=1$ together with the approximation properties~\eqref{eq:approx.sT} of $\mathrm{s}_T$, it is inferred for the second term
  \begin{equation}\label{eq:en.err.est:T2}
    |\term_2|\lesssim h^{k+1}\norm[H^{k+2}(\Omega)]{u}.
  \end{equation}
  Using~\eqref{eq:en.err.est:T1} and \eqref{eq:en.err.est:T2} to bound the right-hand side of~\eqref{eq:en.err.est:basic},~\eqref{eq:poisson:en.err.est} follows.\qed
\end{proof}

\subsubsection{Convergence of the jumps}
Functions in $H^1(\Th)\eqbydef\left\{v\in L^2(\Omega)\st\restrto{v}{T}\in H^1(T)\quad\forall T\in\Th\right\}$ are in $H_0^1(\Omega)$ if their jumps vanish a.e. at interfaces and their trace is zero a.e. on $\partial\Omega$; see, e.g.,~\cite[Lemma~1.23]{Di-Pietro.Ern:12}.
Thus, a measure of the nonconformity is provided by the jump seminorm $\seminorm[{\rm J},h]{{\cdot}}$ such that, for all $v\in H^1(\Th)$,
\begin{equation}\label{eq:seminorm.J}
  \seminorm[{\rm J},h]{v}^2\eqbydef 
  \sum_{F\in\Fh}h_F^{-1}\norm[F]{\lproj[F]{k}\jump{v}}^2,
\end{equation}
where $\jump{{\cdot}}$ denotes the usual jump operator such that, for all faces $F\in\Fh$ and all functions $v:\bigcup_{T\in\Th[F]}T\to\Real$ smooth enough,
\begin{equation}\label{eq:def.jump}
  \jump{v}\eqbydef\begin{cases}
  \restrto{v}{T_1}-\restrto{v}{T_2} & \forall F\in\Fh[T_1]\cap\Fh[T_2],
  \\
  v & \forall F\in\Fhb.
  \end{cases}
\end{equation}
A natural question is whether the jump seminorm of $\ph\uu[h]$ converges to zero.
The answer is provided by the following lemma.
\begin{lemma}[Convergence of the jumps]\label{lem:jump.est}
  Under the assumptions and notations of Theorem~\ref{thm:poisson:en.err.est}, and further supposing, for the sake of simplicity, that the local stabilization bilinear form $\mathrm{s}_T$ is given by~\eqref{eq:sT.vem}, there is a real number $C>0$ independent of $h$, but possibly depending on $d$, $\varrho$, and $k$, such that
  \begin{equation}\label{eq:jump.est}
    \seminorm[{\rm J},h]{\ph\uu[h]}\le C h^{k+1}\norm[H^{k+2}(\Omega)]{u}.
  \end{equation}
\end{lemma}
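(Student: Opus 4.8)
The plan is to bound the jump seminorm of $\ph\uu[h]$ by inserting $u$ as an intermediate quantity and exploiting the fact that $u\in H_0^1(\Omega)$, so its own jumps vanish a.e.\ at interfaces and its trace vanishes on $\partial\Omega$. Concretely, on each face $F\in\Fh$ with $F\subset\partial T_1\cap\partial T_2$ (or $F\in\Fhb$ with the single element $T$), the projected jump $\lproj[F]{k}\jump{\ph\uu[h]}$ can be rewritten, using $\jump{u}=0$ on $F$, as $\lproj[F]{k}\big(\restrto{(\pT[T_1]\uu[T_1]-u)}{F} - \restrto{(\pT[T_2]\uu[T_2]-u)}{F}\big)$, and similarly a single-term expression at boundary faces. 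So it suffices to control, for each $T\in\Th$ and each $F\in\Fh[T]$, the face quantity $h_F^{-\nicefrac12}\norm[F]{\lproj[F]{k}(\pT\uu-u)}$ and sum its squares over all $T$ and $F$ (each interface being counted for its two elements, each boundary face once), which only costs a harmless factor owing to $\card(\Fh[T])\lesssim 1$.

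\textbf{Key steps.} First I would split $\pT\uu - u = (\pT\uu - \pT\IT u) + (\pT\IT u - u) = \pT(\uu-\IT u) + (\eproj[T]{k+1}u - u)$, using linearity of $\pT$ and the commuting property~\eqref{eq:pT:commuting}. The second summand is handled directly by the trace approximation estimate~\eqref{eq:approx.trace} with $\alpha=1$, $l=k+1$, $s=k+2$, $m=0$, giving $h_F^{-\nicefrac12}\norm[F]{\eproj[T]{k+1}u-u}\lesssim h_T^{k+1}\seminorm[H^{k+2}(T)]{u}$, and after applying $\lproj[F]{k}$ (which is $L^2(F)$-bounded) the same bound persists. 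For the first summand, the trick is to bring in the residual operator $\delta_{TF}^k$ from~\eqref{eq:residuals}: writing $\pT(\uu-\IT u)$ in terms of the face DOFs requires care, but since $\pT\IT u$ leaves $\eproj[T]{k+1}u$ and we want to relate $\lproj[F]{k}\pT\uu$ to the face unknown $u_F$ of $\uu$, I would instead go through $\lproj[F]{k}(\pT\uu - u_F) = \delta_{TF}^k\uu$ and $\lproj[F]{k}(u_F - u) = \lproj[F]{k}(\lproj[F]{k}\restrto{u}{F} - u) = 0$ (here $u_F = \lproj[F]{k}\restrto{u}{F}$ is \emph{not} the discrete unknown — I must be careful). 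The correct decomposition is $\lproj[F]{k}(\pT\uu-u) = \delta_{TF}^k\uu + \lproj[F]{k}((u_F)_{\uu[h]} - u)$, where $(u_F)_{\uu[h]}$ is the face component of the discrete solution; then $\delta_{TF}^k\uu$ is controlled by $h_F^{\nicefrac12}\,\mathrm{s}_T(\uu,\uu)^{\nicefrac12}$ (directly from the definition~\eqref{eq:sT.vem} of $\mathrm{s}_T$, which is why the hypothesis pins down this specific form), and $\mathrm{s}_T(\uu,\uu)^{\nicefrac12}$ summed over $T$ is exactly $\seminorm[\mathrm{s},h]{\uu[h]}$, bounded by $Ch^{k+1}\norm[H^{k+2}(\Omega)]{u}$ by Theorem~\ref{thm:poisson:en.err.est}. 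The remaining piece $h_F^{-\nicefrac12}\norm[F]{\lproj[F]{k}((u_F)_{\uu[h]} - u)}$ is handled by inserting $u_h$ and $\lproj[F]{k}\restrto{u}{F}$: it is dominated by the face terms of $\norm[1,h]{\uu[h]-\Ih u}$ plus $h_F^{-\nicefrac12}\norm[F]{\lproj[F]{k}\restrto{u}{F} - u}$, the first controlled via $\norm[\mathrm{a},h]{\uu[h]-\Ih u}\lesssim h^{k+1}\norm[H^{k+2}(\Omega)]{u}$ (established inside the proof of Theorem~\ref{thm:poisson:en.err.est} as $\term_1$) together with stability~\eqref{eq:stab.h}, and the second by~\eqref{eq:approx.trace} again.

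\textbf{Main obstacle.} The delicate point is bookkeeping the two different meanings of ``$u_F$'': the face component of the discrete solution $\uu[h]$ versus the $L^2$-projection $\lproj[F]{k}\restrto{u}{F}$ appearing in $\Ih u$. Once one consistently routes everything through the difference $\uu[h]-\Ih u$ (whose energy norm is already estimated) plus the projection errors of $u$ itself (governed by~\eqref{eq:approx.trace}) plus the stabilization seminorm $\seminorm[\mathrm{s},h]{\uu[h]}$ (governed by Theorem~\ref{thm:poisson:en.err.est}), the rest is a Cauchy--Schwarz assembly over faces using $\card(\Fh[T])\le N_\partial$. I would expect no genuinely new estimate is needed — the lemma is essentially a corollary of the energy estimate plus the projection bounds — and the restriction to the stabilization~\eqref{eq:sT.vem} is precisely what lets $\delta_{TF}^k\uu$ be read off directly from $\mathrm{s}_T(\uu,\uu)$ without an intermediate norm-equivalence argument.
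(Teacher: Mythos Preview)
Your overall strategy—reducing the jump to element-face contributions and invoking the stabilization seminorm—is sound, but there is a gap in your handling of the ``remaining piece'' $h_F^{-\nicefrac12}\norm[F]{\lproj[F]{k}((u_F)_{\uu[h]} - u)}$. After projecting, this equals $h_F^{-\nicefrac12}\norm[F]{w_F}$, where $w_F$ is the face component of $\uw[h] \eqbydef \uu[h] - \Ih u$. You claim it is dominated by the face terms of $\norm[1,h]{\uw[h]}$, but those face terms are $h_F^{-1}\norm[F]{w_F - w_T}^2$, not $h_F^{-1}\norm[F]{w_F}^2$; bridging the two would cost $h_F^{-1}\norm[F]{w_T}^2 \lesssim h_T^{-2}\norm[T]{w_T}^2$, an $L^2$-supercloseness quantity that the energy estimate alone does not control (it would need elliptic regularity, cf.\ Remark~\ref{rem:supercloseness}). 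So as written this step does not close.

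The fix—and this is precisely the paper's proof—is to notice that you never need to bound $w_F$ at all. Since $(u_F)_{\uu[h]}$ is a face quantity, it is \emph{single-valued} on any interface; hence in the decomposition $\lproj[F]{k}(\pT\uu - u) = \delta_{TF}^k\uu + \big((u_F)_{\uu[h]} - \lproj[F]{k}u\big)$ the second summand is the same for $T_1$ and $T_2$ and cancels in the jump. On boundary faces both $(u_F)_{\uu[h]} = 0$ (because $\uu[h]\in\UhD$) and $\lproj[F]{k}u = 0$ (because $u\in H_0^1(\Omega)$), so the term vanishes there as well. Once you see this, the detour through the exact solution $u$ is superfluous: the paper simply inserts the discrete face unknown $u_F$ into the jump, obtains
\[
h_F^{-1}\norm[F]{\lproj[F]{k}\jump{\ph\uu[h]}}^2
\le 2\sum_{T\in\Th[F]}h_F^{-1}\norm[F]{\lproj[F]{k}(\pT\uu - u_F)}^2
= 2\sum_{T\in\Th[F]}h_F^{-1}\norm[F]{\delta_{TF}^k\uu}^2,
\]
and reads off the bound by $2\seminorm[\mathrm{s},h]{\uu[h]}^2$ directly from the expression~\eqref{eq:sT.vem} of $\mathrm{s}_T$; Theorem~\ref{thm:poisson:en.err.est} then finishes the job. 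The whole argument is three lines, and none of your auxiliary pieces (the $\eproj[T]{k+1}u-u$ trace bound, the $\norm[\mathrm{a},h]{\uu[h]-\Ih u}$ estimate) are needed.
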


\begin{proof}
  Inserting $u_F$ inside the jump and using the triangle inequality for every interface $F\in\Fhi$, and recalling that $v_F=0$ on every boundary face $F\in\Fhb$, it is inferred that
  $$
  \begin{aligned}
    \sum_{F\in\Fh}h_F^{-1}\norm[F]{\lproj[F]{k}\jump{\ph\uu[h]}}^2
    &\le 2\sum_{F\in\Fh}\sum_{T\in\Th[F]}h_F^{-1}\norm[F]{\lproj[F]{k}(\pT\uu - u_F)}^2
    \\
    &\le 2\sum_{T\in\Th}\sum_{F\in\Fh[T]}h_F^{-1}\norm[F]{\lproj[F]{k}(\pT\uu - u_F)}^2
    \le 2\seminorm[\mathrm{s},h]{\uu[h]}^2.
  \end{aligned}
  $$
  Using~\eqref{eq:poisson:en.err.est} to bound the right-hand side yields~\eqref{eq:jump.est}.
\end{proof}

\subsubsection{$L^2$-error estimate}

To close this section, we state a result concerning the convergence of the error in the $L^2$-norm.
Optimal error estimates require in this context further regularity for the continuous operator.
More precisely, we assume that, for all $g\in L^2(\Omega)$, the unique solution of the problem:
Find $z\in H_0^1(\Omega)$ such that
$$
a(z,v) = (g,v)\qquad\forall v\in H_0^1(\Omega)
$$
satisfies the a priori estimate
$$
\norm[H^2(\Omega)]{z}\le C\norm{g},
$$
with real number $C$ depending only on $\Omega$.
Elliptic regularity holds when the domain $\Omega$ is convex; see, e.g.,~\cite{Grisvard:92}.
The following result, whose detailed proof is omitted, can be obtained using the arguments of~\cite[Theorem~10]{Di-Pietro.Ern.ea:14} and~\cite[Corollary~4.6]{Aghili.Boyaval.ea:15}.

\begin{svgraybox}
  \begin{theorem}[$L^2$-error estimate]\label{thm:poisson:l2.err.est}
    Under the assumptions and notations of Theorem~\ref{thm:poisson:en.err.est}, and further assuming elliptic regularity and that $f\in H^1(\Omega)$ if $k=0$, $f\in H^k(\Omega)$ if $k\ge 1$, there exists a real number $C>0$ independent of $h$, but possibly depending on $\Omega$, $d$, $\varrho$, and $k$, such that
    \begin{equation}\label{eq:poisson:l2.err.est}
      \norm{\ph\uu[h]-u}\le \begin{cases}
        C h^2\norm[H^{1}(\Omega)]{f} & \text{if $k=0$},
        \\
        C h^{k+2}\left(\norm[H^{k+2}(\Omega)]{u} + \norm[H^k(\Omega)]{f}\right) & \text{if $k\ge 1$}.
      \end{cases}
    \end{equation}
  \end{theorem}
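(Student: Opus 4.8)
The plan is to use the Aubin--Nitsche duality argument adapted to the HHO setting. For a given $g\in L^2(\Omega)$ we introduce the dual solution $z\in H_0^1(\Omega)$ of $a(z,v)=(g,v)$ for all $v\in H_0^1(\Omega)$; by elliptic regularity $z\in H^2(\Omega)$ with $\norm[H^2(\Omega)]{z}\lesssim\norm{g}$. We then pick $g\eqbydef\lproj{k+1}(\ph\uu[h]-u)$ (or perhaps more conveniently work elementwise and pair against $\pT\uu[T]-u$, adjusting for the fact that $\ph\uu[h]-u$ is not in $L^2$ only in the sense that it is piecewise polynomial plus $u$) so that $\norm{\ph\uu[h]-u}^2$ is, up to the higher-order term $\norm{(\ph\uu[h]-u)-\lproj{k+1}(\ph\uu[h]-u)}$ controlled by the energy estimate~\eqref{eq:poisson:en.err.est}, equal to $(g,\ph\uu[h]-u)$. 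The strategy is to rewrite $(g,\ph\uu[h]-u)$ using the dual problem and the discrete problem~\eqref{eq:poisson:discrete}, introducing the reduction $\Ih z$ of the dual solution, so as to make the conformity error functional ${\cal E}_h$ appear for \emph{both} the primal and the dual solution.

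The key algebraic step is to produce the standard duality identity. Writing $\underline{e}_h\eqbydef\uu[h]-\Ih u$ and using the definition~\eqref{eq:aT}--\eqref{eq:ah} of $\mathrm{a}_h$, one expands
\begin{equation*}
  \norm{\ph\uu[h]-u}^2 \approx (g,\ph\uu[h]-u)
  = \mathrm{a}_h(\Ih z,\uu[h]) + {\cal E}_h(z;\uu[h]) - \text{(l.o.t.)},
\end{equation*}
and then, since $\mathrm{a}_h(\Ih z,\uu[h]) = \mathrm{a}_h(\uu[h],\Ih z) = (f,z_h) $ by~\eqref{eq:poisson:discrete}, while on the continuous side $(f,z)=a(u,z)=a(z,u)=(g,u)$, the principal terms cancel and one is left with a sum of two conformity-error contributions: one of the form ${\cal E}_h(z;\underline{e}_h)$ with the primal error $\underline{e}_h$ tested against the dual data (here only $z\in H^2$ is available, so~\eqref{eq:consistency.h} with $k=0$-type regularity gives $h^1$ times $\norm[H^2(\Omega)]{z}$, which is then multiplied by $\norm[1,h]{\underline{e}_h}\lesssim h^{k+1}\norm[H^{k+2}(\Omega)]{u}$ from Theorem~\ref{thm:poisson:en.err.est}, yielding $h^{k+2}$), and a symmetric term ${\cal E}_h(u;\Ih z - \uhu[\text{dual}])$ or more precisely a term controlled by $h^{k+1}\norm[H^{k+2}(\Omega)]{u}$ times the consistency of the dual reduction. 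The adjoint consistency term involving $f$ arises because, when $g$ or $f$ has extra Sobolev regularity, an extra factor of $h$ can be extracted from the volumetric residual $-(\LAPL z, e_h) - \cdots$ by inserting an $L^2$-projection of $f$ (resp.\ of $\LAPL z$) and using that $e_h$ has mean-zero-type oscillation against polynomials, which is where the distinction $f\in H^1$ for $k=0$ versus $f\in H^k$ for $k\ge1$ and the different powers $h^2$ versus $h^{k+2}$ originate.

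Concretely the steps are: (1) fix the dual solution $z$ and invoke elliptic regularity; (2) decompose $\norm{\ph\uu[h]-u}^2$ into $(g,\ph\uu[h]-u)$ plus a higher-order projection-oscillation remainder bounded via~\eqref{eq:approx.approx.trace} and Theorem~\ref{thm:poisson:en.err.est}; (3) insert the dual problem and the discrete problem to cancel the Galerkin parts, expressing the remainder through ${\cal E}_h(z;\cdot)$ evaluated at the primal error and through a dual conformity/consistency term evaluated using the primal data; (4) bound ${\cal E}_h(z;\underline{e}_h)$ by combining the consistency estimate~\eqref{eq:consistency.h} (only the $s=2$, i.e.\ $H^2$, regularity of $z$ is usable, so one gets a single power of $h$) with $\norm[1,h]{\underline{e}_h}\lesssim h^{k+1}\norm[H^{k+2}(\Omega)]{u}$; (5) for the term carrying the data regularity, re-examine the integration-by-parts identity~\eqref{eq:ah:consistency:1}--\eqref{eq:Eh}, insert $\lproj[T]{k-1}$ (or $\lproj[T]{0}$ when $k=0$) applied to $f$ in the volumetric pairing and use $-\LAPL z = g$, an elementwise trace/approximation bound~\eqref{eq:approx.trace}, and the data regularity to gain the extra power; (6) collect the bounds and pass to the supremum over $g$ with $\norm{g}=1$. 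The main obstacle I anticipate is step~(5): carefully tracking how the extra factor of $h$ is extracted from the volumetric term — this requires writing $-(\LAPL w - f, v_h) $ type manipulations with $f$ replaced by its projection and exploiting orthogonality, and the bookkeeping is delicate precisely because it is responsible for the asymmetric statement ($H^1$ data and $h^2$ for $k=0$ versus $H^k$ data and $h^{k+2}$ for $k\ge 1$); getting the right projection degrees and the right trace estimates there, rather than the cancellations of the Galerkin terms (which are routine), is where the real work lies.
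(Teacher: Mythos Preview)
The paper does not actually give a proof of this theorem: it omits the details and refers to \cite[Theorem~10]{Di-Pietro.Ern.ea:14} and \cite[Corollary~4.6]{Aghili.Boyaval.ea:15}, only indicating in Remark~\ref{rem:supercloseness} that the argument is an Aubin--Nitsche duality adapted to HHO, carried out \emph{via the intermediate supercloseness estimate}~\eqref{eq:supercloseness} on the element DOFs $\norm{\lproj{k}u-u_h}$. Your plan is therefore on the right track in that it is the same duality strategy; the main structural difference is that you attempt to run the argument directly on $\ph\uu[h]-u$, whereas the paper's route first establishes~\eqref{eq:supercloseness} and then recovers~\eqref{eq:poisson:l2.err.est} by a triangle inequality together with the approximation properties~\eqref{eq:approx.approx.trace} of $\eproj[T]{k+1}=\pT\circ\IT$.

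Two concrete suggestions. First, your choice $g=\lproj{k+1}(\ph\uu[h]-u)$ is needlessly awkward (and your parenthetical hesitation confirms you feel this too). Setting $e_h\eqbydef u_h-\lproj{k}u$, $\underline{e}_h\eqbydef\uu[h]-\Ih u\in\UhD$, and taking $g\eqbydef e_h$ as the dual source yields the clean identity
\[
\norm{e_h}^2=(-\LAPL z,e_h)={\cal E}_h(z;\underline{e}_h)+\mathrm{a}_h(\Ih z,\underline{e}_h)={\cal E}_h(z;\underline{e}_h)+{\cal E}_h(u;\Ih z),
\]
where the last equality follows from symmetry of $\mathrm{a}_h$, the discrete problem~\eqref{eq:poisson:discrete} with $\uv[h]=\Ih z$, and the definition~\eqref{eq:Eh} of ${\cal E}_h$ applied to $u$. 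This is precisely the ``two conformity errors'' you anticipate, with no projection-oscillation remainder to manage. Second, for the term ${\cal E}_h(u;\Ih z)$ you do not simply quote~\eqref{eq:consistency.h}: you reopen the proof of consistency and exploit that the test function is the \emph{interpolate} $\Ih z$, so that $v_F-v_T=\lproj[F]{k}z-\lproj[T]{k}z$ gains an extra factor $h_T^{\nicefrac32}\seminorm[H^2(T)]{z}$ (rather than merely $h_T^{\nicefrac12}$) when $k\ge 1$, and the stabilization contribution is bounded by $\mathrm{s}_T(\IT u,\IT u)^{\nicefrac12}\mathrm{s}_T(\IT z,\IT z)^{\nicefrac12}\lesssim h_T^{k+2}$ via Proposition~\ref{prop:approx.sT} applied to both $u$ and $z$. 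Your instinct that the data-regularity term and the $k=0$ case are where the bookkeeping is delicate is correct; that is indeed where the argument in the cited references departs from the generic consistency bound.
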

\end{svgraybox}

\begin{remark}[Supercloseness of element DOFs]\label{rem:supercloseness}
  An intermediate step in the proof of the estimate~\eqref{eq:poisson:l2.err.est} (see~\cite[Theorem~10]{Di-Pietro.Ern.ea:14}) consists in showing that the element DOFs are superclose to the $L^2$-projection of the exact solution on $\Poly{k}(\Th)$:
  \begin{equation}\label{eq:supercloseness}
    \norm{\lproj{k}u-u_h}\le \begin{cases}
      C h^2\norm[H^{1}(\Omega)]{f} & \text{if $k=0$},
      \\
      C h^{k+2}\left(\norm[H^{k+2}(\Omega)]{u} + \norm[H^k(\Omega)]{f}\right) & \text{if $k\ge 1$}.
      \end{cases}
  \end{equation}
  This is done adapting to the HHO framework the classical Aubin--Nitsche technique.
\end{remark}

\subsection{A posteriori error analysis}\label{sec:basics:aposteriori.error.analysis}

For smooth enough exact solutions, it is classically expected that increasing the polynomial degree $k$ will reduce the computational time required to achieve a desired precision; see, e.g., the numerical test in Section~\ref{sec:poisson:num:3d.smooth} below and, in particular, Fig.~\ref{fig:poisson.3d.smooth:err.vs.t_tot}.
However, when the regularity requirements detailed in Theorems~\ref{thm:poisson:en.err.est} and~\ref{thm:poisson:l2.err.est} are not met, the order of convergence is limited by the regularity of the solution instead of the polynomial degree. 
To restore optimal orders of convergence, local mesh adaptation is required.
This is typically done using a posteriori error estimators to mark the elements where the error is larger, and locally refine the computational mesh based on this information. 
Here, we present energy-norm upper and lower bounds for the HHO method~\eqref{eq:poisson:discrete} inspired by the residual-based approach of~\cite{Di-Pietro.Specogna:16}.

\subsubsection{Error upper bound} 

We start by proving an upper bound of the discretization error in terms of quantities whose computation does not require the knowledge of the exact solution.
We will need the following local Poincar\'{e} and Friedrichs inequalities, valid for all $T\in\Th$ and all $\varphi\in H^1(T)$:
\begin{align}
  \norm[T]{\varphi - \lproj[T]{0}\varphi} &\le C_{{\rm P},T} h_T \norm[T]{\GRAD\varphi}, \label{eq:poincare_loc}
  \\
  \norm[\partial T]{\varphi - \lproj[T]{0}\varphi} &\le C_{{\rm F},T}^{\nicefrac12} h_T^{\nicefrac12} \norm[T]{\GRAD\varphi}. \label{eq:friedrichs}
\end{align}
In~\eqref{eq:poincare_loc}, $C_{{\rm P},T}$ is a constant equal to $\pi^{-1}$ if $T$ is convex~\cite{Payne.Weinberger:60,Bebendorf:03}, and for which upper bounds on nonconvex elements can be found in~\cite{Vohralik:07}.
In~\eqref{eq:friedrichs}, $C_{{\rm F},T}$ is a constant which, if $T$ is a simplex, can be estimated as $C_{{\rm F},T}=C_{{\rm P},T}(h_T \meas[d-1]{\partial T}/{\meas{T}})(2/d+C_{{\rm P},T})$ (see~\cite[Section~5.6.2.2]{Di-Pietro.Ern:12}).
\begin{svgraybox}
  \begin{theorem}[A posteriori error upper bound]\label{thm:poisson:apost.upper.bound}
    Let $u\in H_0^1(\Omega)$ and $\uu[h]\in\UhD$ denote the unique solutions to problems~\eqref{eq:poisson:weak} and~\eqref{eq:poisson:discrete}, respectively, with local stabilization bilinear form $\mathrm{s}_T$ satisfying the assumptions of Proposition~\ref{prop:sT'} for all $T\in\Th$.
    Let $u_h^*$ be an arbitrary function in $H_0^1(\Omega)$. Then, it holds that
    \begin{equation}\label{eq:poisson:a.post}
      \norm{\GRADh(\ph\uu[h]-u)}\le\left[
        \sum_{T\in\Th}\left(
        \est{nc}^2 + (\est{res}+\est{sta})^2
        \right)
      \right]^{\nicefrac12},
    \end{equation}
    with local nonconformity, residual, and stabilization estimators such that, for all $T\in\Th$,
    \begin{subequations}
      \begin{align}
        \est{nc} &\eqbydef \norm[T]{\GRAD(\pT\uu[T]-u_h^*)}, \label{eq:def:est_nc}
        \\
        \est{res} &\eqbydef C_{{\rm P},T} h_T\norm[T]{(f+\LAPL\pT\uu[T])-\lproj[T]{0}(f+\LAPL\pT\uu[T])}, \label{eq:def:est_res}
        \\
        \est{sta} &\eqbydef C_{{\rm F},T}^{\nicefrac12} h_T^{\nicefrac12}\left(\sum_{F\in\Fh[T]}\norm[F]{R_{TF}^k\uu}^2\right)^{\nicefrac12},\label{eq:def:est_sta}
      \end{align}
    \end{subequations}
    where, for all $F\in\Fh[T]$, the boundary residual $R_{TF}^k$ is defined by~\eqref{eq:RpT}.
  \end{theorem}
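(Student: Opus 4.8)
The natural strategy is the standard residual-based a posteriori argument, adapted to the HHO setting: bound the energy error by testing against an arbitrary $v \in H_0^1(\Omega)$, decompose the resulting expression using the discrete local balance of Lemma~\ref{lem:balance.equilibrium.disc}, and recognize the three estimators as the contributions of nonconformity, elementwise residual, and stabilization flux. Since $\ph\uu[h]$ is not in $H_0^1(\Omega)$, the first step is to split it against the conforming companion $u_h^*$: writing $\ph\uu[h] - u = (\ph\uu[h] - u_h^*) + (u_h^* - u)$, the first summand is controlled elementwise by $\est{nc}$, so it remains to estimate $\norm{\GRADh(u_h^* - u)}$. Because $u_h^* - u \in H_0^1(\Omega)$, one has $\norm{\GRAD(u_h^* - u)} = \sup\{(\GRAD(u_h^*-u), \GRAD\varphi) : \varphi \in H_0^1(\Omega),\ \norm{\GRAD\varphi}=1\}$, and using $a(u,\varphi) = (f,\varphi)$ this supremand becomes $(\GRAD u_h^*, \GRAD\varphi) - (f,\varphi)$.

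The core of the proof is to rewrite $(\GRAD u_h^*, \GRAD\varphi) - (f,\varphi)$ in a computable, elementwise form. First replace $u_h^*$ by $\ph\uu[h]$ inside: $(\GRAD u_h^*,\GRAD\varphi) = (\GRADh(\ph\uu[h]),\GRAD\varphi) + (\GRAD(u_h^* - \ph\uu[h]), \GRAD\varphi)$, where the second term is bounded by $(\sum_T \est{nc}^2)^{1/2}$. For the first term, integrate by parts element by element: $(\GRADh(\ph\uu[h]),\GRAD\varphi) = -\sum_{T}(\LAPL\pT\uu[T], \varphi)_T + \sum_T \sum_{F\in\Fh[T]} (\GRAD\pT\uu[T]\SCAL\normal_{TF}, \varphi)_F$. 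Now invoke the discrete balance~\eqref{eq:balance.disc}: it tells us that for $v_T = \lproj[T]{k}\varphi$ (or more to the point, after exploiting~\eqref{eq:balance.disc} with a cleverly chosen test function together with the flux continuity~\eqref{eq:equilibrium.disc}), the terms $(f,\lproj[T]{k}\varphi)_T$ and $-\GRAD\pT\uu[T]\SCAL\normal_{TF} + R_{TF}^k\uu[T]$ combine so that, on interfaces, the numerical flux traces $S_{TF}$ cancel across elements since $\varphi$ is single-valued. The key manipulation is: $(\GRADh(\ph\uu[h]),\GRAD\varphi) - (f,\varphi)$ equals, after using~\eqref{eq:balance.disc} to eliminate the volumetric and boundary polynomial parts, a sum over $T$ of $-((f + \LAPL\pT\uu[T]), \varphi - \lproj[T]{0}\varphi)_T$ plus boundary terms involving $(R_{TF}^k\uu[T], \varphi - \lproj[T]{0}\varphi)_F$ — the orthogonality of $\varphi - \lproj[T]{0}\varphi$ to constants being exactly what lets us subtract $\lproj[T]{0}(f + \LAPL\pT\uu[T])$ inside the first term and invoke the flux continuity to insert $\lproj[T]{0}\varphi$ on faces.

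Once this identity is in place, the estimate is routine: apply Cauchy--Schwarz on each term, then the local Poincaré inequality~\eqref{eq:poincare_loc} to bound $\norm[T]{\varphi - \lproj[T]{0}\varphi}$ by $C_{{\rm P},T}h_T\norm[T]{\GRAD\varphi}$ — yielding the factor $\est{res}$ — and the local Friedrichs inequality~\eqref{eq:friedrichs} to bound $\norm[\partial T]{\varphi - \lproj[T]{0}\varphi}$ by $C_{{\rm F},T}^{1/2}h_T^{1/2}\norm[T]{\GRAD\varphi}$, together with Cauchy--Schwarz over the faces $F\in\Fh[T]$, yielding $\est{sta}$. Collecting the $\norm[T]{\GRAD\varphi}$ factors and applying the discrete Cauchy--Schwarz inequality over $T\in\Th$ against the constraint $\norm{\GRAD\varphi}=1$ produces $(\sum_T (\est{res}+\est{sta})^2)^{1/2}$. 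Combining with the nonconformity contribution and the triangle inequality from the initial splitting gives~\eqref{eq:poisson:a.post}.

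\textbf{Main obstacle.} The delicate point is the bookkeeping in the elementwise integration-by-parts step: one must carefully use the discrete balance~\eqref{eq:balance.disc} \emph{and} the flux continuity~\eqref{eq:equilibrium.disc} in tandem to show that the face terms telescope into a form involving only $R_{TF}^k\uu[T]$ paired against $\varphi - \lproj[T]{0}\varphi$ (rather than $\varphi$ itself), since it is precisely the mean-value-zero property of $\varphi - \lproj[T]{0}\varphi$ that allows the Poincaré/Friedrichs inequalities to be applied and that makes the estimators computable. A secondary subtlety is that~\eqref{eq:balance.disc} only holds for polynomial test functions $v_T \in \Poly{k}(T)$, so one inserts $\lproj[T]{k}\varphi$ and must track the consistency of the replacement $\varphi \rightsquigarrow \lproj[T]{0}\varphi$ versus $\lproj[T]{k}\varphi$ — here using that $\LAPL\pT\uu[T]$ need not be a polynomial of degree $\le k$, which is why the residual estimator retains the full $f + \LAPL\pT\uu[T]$ rather than its projection, with only the rank-one correction $\lproj[T]{0}$ subtracted for the Poincaré inequality to bite.
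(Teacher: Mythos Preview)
Your treatment of the residual term $\langle\mathcal{R},\varphi\rangle=(f,\varphi)-(\GRADh\ph\uu[h],\GRAD\varphi)$ is essentially right and matches the paper: integrate by parts elementwise, test the discrete problem with $\uphi[h]$ having $\varphi_T=\lproj[T]{0}\varphi$ and $\varphi_F=\lproj[F]{k}\varphi$, use the reformulation~\eqref{eq:sT.RTF} of $\mathrm{s}_T$ in terms of $R_{TF}^k$, then apply Poincar\'e~\eqref{eq:poincare_loc} and Friedrichs~\eqref{eq:friedrichs}. (A small correction: the test function on elements is the \emph{constant} $\lproj[T]{0}\varphi$, not $\lproj[T]{k}\varphi$; this is what kills the volume gradient term in the integration by parts and makes the $\lproj[T]{0}$ subtraction in $\est{res}$ appear naturally.) The paper does this by testing the global discrete problem~\eqref{eq:poisson:discrete} directly rather than invoking Lemma~\ref{lem:balance.equilibrium.disc}, but the two routes are equivalent.

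The genuine gap is your initial splitting. Writing $\ph\uu[h]-u=(\ph\uu[h]-u_h^*)+(u_h^*-u)$ via the triangle inequality, and then inside the bound for $\norm{\GRAD(u_h^*-u)}$ replacing $u_h^*$ by $\ph\uu[h]$ at the cost of another $(\sum_T\est{nc}^2)^{1/2}$, counts the nonconformity estimator \emph{twice}. Your argument therefore yields
\[
\norm{\GRADh(\ph\uu[h]-u)}\le 2\Big(\sum_{T\in\Th}\est{nc}^2\Big)^{\nicefrac12}+\Big(\sum_{T\in\Th}(\est{res}+\est{sta})^2\Big)^{\nicefrac12},
\]
which is \emph{not}~\eqref{eq:poisson:a.post}: the statement is constant-free and has a sum-of-squares structure. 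The paper obtains the sharp form by invoking instead the abstract Pythagorean-type estimate of~\cite[Lemma~5.44]{Di-Pietro.Ern:12},
\[
\norm{\GRADh(\ph\uu[h]-u)}^2\le\norm{\GRADh(\ph\uu[h]-u_h^*)}^2+\Big(\sup_{\varphi}\langle\mathcal{R},\varphi\rangle_{-1,1}\Big)^2,
\]
whose proof uses the identity $(\GRADh(\ph\uu[h]-u),\GRAD(u_h^*-u))=-\langle\mathcal{R},u_h^*-u\rangle_{-1,1}$ together with Young's inequality, so that the cross term is absorbed rather than estimated by the triangle inequality. Replace your first step by this decomposition and the rest of your argument goes through to give the stated bound.
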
  
\end{svgraybox}
\begin{remark}[Nonconformity estimator]\label{rem:nonconformity_est}
  To compute the estimator $\est{nc}$, we can obtain a $H_0^1(\Omega)$-conforming function $u_h^*$ by applying a node-averaging operator to $\ph\uu[h]$.
  Let an integer $l\ge 1$ be fixed.
  When $\Th$ is a matching simplicial mesh and $\Fh$ is the corresponding set of simplicial faces, the node-averaging operator   $\Osw[l]  : \Poly{l}(\Th) \to \Poly{l}(\Th) \cap H_{0}^{1}(\Omega)$ is defined by setting for each (Lagrange) interpolation node $N$
  $$
  \Osw[l] v_h (N) \eqbydef\begin{cases}
  \frac{1}{\card(\mathcal{T}_N)} \sum_{T \in \mathcal{T}_N} (v_h)_{|T} (N) & \text{if $N\in\Omega$},
  \\
  0 & \text{if $N\in\partial\Omega$},
  \end{cases}
  $$
  where the set $\mathcal{T}_N \subset \Th$ collects the simplices to which $N$ belongs.  
  We then set
  \begin{equation}\label{eq:osw_u*}
    u_h^* \eqbydef \Osw \ph\uu[h].
  \end{equation}
  The generalization to polytopal meshes can be realized applying the node averaging operator to $\ph\uu[h]$ on a simplicial submesh of $\Th$ (whose existence is guaranteed for regular mesh sequences, see Definition~\ref{def:mesh.reg}).
\end{remark}
\begin{proof}
  Let the equation residual $\mathcal{R}\in H^{-1}(\Omega)$ be such that, for all $\varphi\in H_0^1(\Omega)$, $\langle\mathcal{R},\varphi\rangle_{-1,1}\eqbydef(f,\varphi) - (\GRADh\ph\uu[h],\GRAD\varphi)$.
  The following abstract error estimate descends from~\cite[Lemma 5.44]{Di-Pietro.Ern:12} and is valid for any function $u_h^*\in H_0^1(\Omega)$:
  \begin{equation}\label{eq:upper_bound_residual0}
    \norm{\GRADh (\ph\uu[h]-u)}^2\le 
    \norm{\GRADh(\ph\uu[h] - u_h^*)}^2
    +\left(
    \sup_{\varphi \in H_0^1(\Omega),\norm{\GRAD\varphi}=1} \langle\mathcal{R},\varphi\rangle_{-1,1}
    \right)^2.
  \end{equation}
  Denote by $\term_1$ and $\term_2$ the addends in the right-hand side of~\eqref{eq:upper_bound_residual0}.

  \begin{asparaenum}[(i)]
  \item \emph{Bound of $\term_1$.}
  Recalling the definition~\eqref{eq:def:est_nc} of the nonconformity estimator, it is readily inferred that
  \begin{equation}\label{eq:upper_bound:T1}
    \term_1 = \sum_{T\in\Th}\est{nc}^2.
  \end{equation}
  \item \emph{Bound of $\term_2$.}
  We bound the argument of the supremum in $\term_2$ for a generic function $\varphi \in H_0^1(\Omega)$. 
  Using an element-by-element integration by parts, we obtain 	 
  \begin{equation}\label{eq:upper_bound_residual}
    \langle\mathcal{R},\varphi\rangle_{-1,1}
    =\sum_{T\in\Th} \bigg(
    ( f+\LAPL\pT\uu[T], \varphi )_T - \sum_{F\in\Fh[T]} ( \GRAD \pT\uu[T] \cdot \normal_{TF}, \varphi)_F 
    \bigg).
  \end{equation}
  
  Let now $\uphi[h] \in \UhD$ be such that $\varphi_T = \lproj[T]{0}{\varphi}$ for all $T\in\Th$ and $\varphi_F = \lproj[F]{k}\restrto{\varphi}{F}$ for all $F\in\Fh$. 
  We have that 
  \begin{equation}\label{eq:upper_bound_mean}
    \begin{aligned}
      \sum_{T\in\Th} ( \lproj[T]{0}{( f+\LAPL\pT\uu[T] )}, \varphi )_T
      &= \sum_{T\in\Th} (f+\LAPL\pT\uu[T], \varphi_T)_T \\
      &= \sum_{T\in\Th} \bigg(
      \mathrm{a}_T(\uu,\uphi) + \sum_{F\in\Fh[T]} (  \GRAD \pT\uu[T] \cdot \normal_{TF},  \varphi_T )_F 
      \bigg)
      \\
      &= \sum_{T\in\Th} \bigg(
      \mathrm{s}_T(\uu,\uphi) + \sum_{F\in\Fh[T]} ( \GRAD \pT\uu[T] \cdot \normal_{TF}, \varphi )_F 
      \bigg),    	
    \end{aligned} 
  \end{equation}  	
  where we have used definition~\eqref{eq:lproj} of $\lproj[T]{0}$ in the first line, 
  the discrete problem \eqref{eq:poisson:discrete} with $\uv[h]=\uphi[h]$ and an element-by-element integration by parts together with the fact that $\GRAD\varphi_T\equiv 0$ for all $T\in\Th$ in the second line. 
  In order to pass to the third line, we have expanded $\mathrm{a}_T$ according to its definition \eqref{eq:aT} and used \eqref{eq:pT:1} with $\uv=\uphi$ and $w=\pT\uu$ for the consistency term (in the boundary integral, we can write $\varphi$ instead of $\varphi_F$ using the definition~\eqref{eq:lproj} of $\lproj[F]{k}$).

  Summing~\eqref{eq:upper_bound_mean} and \eqref{eq:upper_bound_residual}, and rearranging the terms, we obtain
  \begin{equation} \label{eq:upper_bound_residual2}
    \langle\mathcal{R},\varphi\rangle_{-1,1}
    = \sum_{T\in\Th}\bigg(
    ( f+\LAPL\pT\uu[T] - \lproj[T]{0}{(f+\LAPL\pT\uu[T])} , \varphi - \varphi_T)_T
    + \mathrm{s}_T(\uu,\uphi)
    \bigg), 
  \end{equation}
  where we have used the definition~\eqref{eq:lproj} of $\lproj[T]{0}$ to insert $\varphi_T$ into the first term.
  Let us estimate the addends inside the summation, hereafter denoted by $\term_{2,1}(T)$ and $\term_{2,2}(T)$.
  Using the Cauchy--Schwarz and local Poincar\'{e} \eqref{eq:poincare_loc} inequalities, and recalling the definition~\eqref{eq:def:est_res} of the residual estimator, we readily infer, for all $T\in\Th$, that
  \begin{equation} \label{eq:upper_bound_residual:I1}
    |\term_{2,1}(T)|\le\est{res}\norm[T]{\GRAD\varphi}.
  \end{equation} 
  On the other hand, recalling the reformulation~\eqref{eq:sT.RTF} of the local stabilization bilinear form $\mathrm{s}_T$ we have, for all $T\in\Th$,
  \begin{equation} \label{eq:upper_bound_residual:I2}
    | \term_{2,2}(T) |
    =  \bigg| \sum_{F\in\Fh[T]}(R_{TF}^k\uu,\varphi -\varphi_T)_F \bigg|
    \le \est{sta} \norm[T]{\GRAD\varphi}, 
  \end{equation} 
  where we have used the fact that $\varphi_F= \lproj[F]{k}{\varphi}$ and $R_{TF}^k\uu \in \Poly{k}(F)$ together with the definition~\eqref{eq:lproj} of $\lproj[F]{k}$ to write $\varphi$ instead of $\varphi_F$ inside the boundary term,
  and the Cauchy--Schwarz and local Friedrichs~\eqref{eq:friedrichs} inequalities followed by definition~\eqref{eq:def:est_sta}  of the stability estimator to conclude.
  Using~\eqref{eq:upper_bound_residual:I1} and~\eqref{eq:upper_bound_residual:I2} to estimate the right-hand side of~\eqref{eq:upper_bound_residual2} followed by a Cauchy--Schwarz inequality, and plugging the resulting bound inside the supremum in $\term_2$, we arrive at
  \begin{equation}\label{eq:upper_bound:T2}
    \term_2\le\sum_{T\in\Th}(\est{res} + \est{sta})^2.
  \end{equation}
  \item \emph{Conclusion.}
    Plugging~\eqref{eq:upper_bound:T1} and~\eqref{eq:upper_bound:T2} into~\eqref{eq:upper_bound_residual0}, the conclusion follows.\qed
  \end{asparaenum}
\end{proof}

\subsubsection{Error lower bound} 

In practice, one wants to make sure that the error estimators are able to correctly localize the error (for use, e.g., in adaptive mesh refinement) and that they do not unduly overestimate it.
We prove in this section that the error estimators defined in Theorem~\ref{thm:poisson:apost.upper.bound} are \emph{locally efficient}, i.e., they are locally controlled by the error.
This shows that they are suitable to drive mesh refinement.
Moreover, they are also \emph{globally efficient}, i.e., the right-hand side of~\eqref{eq:poisson:a.post} is (uniformly) controlled by the discretization error, so that it cannot depart from it.

Let a mesh element $T\in\Th$ be fixed and define the following sets of faces and elements sharing at least one node with $T$:
$$
\Fhh{\mathcal{N}}{T} \eqbydef\{F\in\Fh\st \closure F \cap \partial T \not = \emptyset\},\qquad
\Thh{\mathcal{N}}{T} \eqbydef\{T'\in\Th\st \closure T' \cap \closure T \not = \emptyset\}. 
$$
Let an integer $l\ge 1$ be fixed.
The following result is proved in \cite{Karakashian.Pascal:03} for standard meshes:
There is a real number $C>0$ independent of $h$, but possibly depending on $d$, $\varrho$, and $l$, such that, for all $v_h \in \Poly{l}(\Th)$ and all $T \in \Th$,
\begin{equation}\label{eq:osw}
  \norm[T]{ v_h - \Osw[l] v_h  }^2 \le C \sum_{F\in \Fhh{\mathcal{N}}{T} } h_F \norm[F]{\jump{v_h}}^2,
\end{equation}
with jump operator defined by~\eqref{eq:def.jump}.
Following~\cite[Section~5.5.2]{Di-Pietro.Ern:12},~\eqref{eq:osw} still holds on regular polyhedral meshes when the nodal interpolator is defined on the matching simplicial submesh of Definition~\ref{def:mesh.reg}.
We also note the following technical result:
\begin{proposition}[Estimate of boundary oscillations]
  Let an integer $l\ge 0$ be fixed.
  There is a real number $C>0$ independent of $h$, but possibly depending on $d$, $\varrho$, and $l$, such that, for all mesh elements $T\in\Th$ and all functions $\varphi\in H^1(T)$,
  \begin{equation}\label{eq:magic.boundary}
    h_F^{-\nicefrac12}\norm[F]{\varphi-\lproj[F]{l}\varphi}
    \le C\norm[T]{\GRAD\varphi}.
  \end{equation}
\end{proposition}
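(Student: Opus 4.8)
The plan is to bound $h_F^{-\nicefrac12}\norm[F]{\varphi-\lproj[F]{l}\varphi}$ by inserting the $L^2(F)$-optimal polynomial, then comparing it against the one-dimensional analogue on $T$ and invoking the trace and approximation machinery already at hand. First I would use the best-approximation property of $\lproj[F]{l}$ to write $\norm[F]{\varphi-\lproj[F]{l}\varphi}\le\norm[F]{\varphi-\lproj[T]{l}\varphi}$, which trades the face projector for the trace on $F$ of the element projector. From here the scaling $h_F^{-\nicefrac12}\lesssim h_T^{-\nicefrac12}$ provided by~\eqref{eq:hT_hF} lets us work entirely with $h_T$.

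Next I would control $h_T^{-\nicefrac12}\norm[F]{\varphi-\lproj[T]{l}\varphi}$. The natural tool is the continuous trace inequality: for $w\in H^1(T)$ one has $\norm[F]{w}\lesssim h_T^{-\nicefrac12}\norm[T]{w}+h_T^{\nicefrac12}\norm[T]{\GRAD w}$ on regular mesh sequences (this follows from the matching simplicial submesh, exactly as the discrete trace inequality~\eqref{eq:trace.disc} and the approximation estimates~\eqref{eq:approx.trace} do). Applying it to $w=\varphi-\lproj[T]{l}\varphi$ gives
\begin{equation*}
  h_F^{-\nicefrac12}\norm[F]{\varphi-\lproj[T]{l}\varphi}
  \lesssim h_T^{-1}\norm[T]{\varphi-\lproj[T]{l}\varphi}
  + \norm[T]{\GRADh(\varphi-\lproj[T]{l}\varphi)}.
\end{equation*}

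Finally I would bound each term on the right by $\norm[T]{\GRAD\varphi}$. For the first term, a local Poincar\'e inequality — which is precisely~\eqref{eq:approx} with $\alpha=0$, $s=1$, $m=0$, and the polynomial degree $\ge 0$, noting $\lproj[T]{l}\varphi$ is the best $L^2$-approximation so $\norm[T]{\varphi-\lproj[T]{l}\varphi}\le\norm[T]{\varphi-\lproj[T]{0}\varphi}\lesssim h_T\norm[T]{\GRAD\varphi}$ — yields $h_T^{-1}\norm[T]{\varphi-\lproj[T]{l}\varphi}\lesssim\norm[T]{\GRAD\varphi}$. For the second term, the $H^1$-boundedness~\eqref{eq:stab.lproj} of $\lproj[T]{l}$ with $s=1$ gives $\norm[T]{\GRAD\lproj[T]{l}\varphi}\lesssim\norm[T]{\GRAD\varphi}$, hence $\norm[T]{\GRADh(\varphi-\lproj[T]{l}\varphi)}\lesssim\norm[T]{\GRAD\varphi}$ by the triangle inequality. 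Combining the three estimates gives~\eqref{eq:magic.boundary}. The only mildly delicate point is justifying the continuous trace inequality with the correct $h_T$-scaling on polytopal elements; this is standard on regular mesh sequences via the simplicial submesh and a scaling argument, and could alternatively be circumvented by splitting $\varphi-\lproj[T]{l}\varphi$ using $\lproj[T]{l}$ acting on a mollified/extended $\varphi$, but the submesh route is cleanest.
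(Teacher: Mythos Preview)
Your proof is correct and follows the same skeleton as the paper's: replace the face projector by (the trace of) the element projector, then bound $h_T^{-\nicefrac12}\norm[F]{\varphi-\lproj[T]{l}\varphi}$. The paper is slightly more economical in both steps: for the first, it inserts $\pm\lproj[T]{l}\varphi$ and uses the $L^2(F)$-boundedness of $\lproj[F]{l}$ to get $\norm[F]{\varphi-\lproj[F]{l}\varphi}\le 2\norm[F]{\varphi-\lproj[T]{l}\varphi}$ (your best-approximation argument is actually cleaner here, since $\restrto{(\lproj[T]{l}\varphi)}{F}\in\Poly{l}(F)$); for the second, it simply invokes~\eqref{eq:approx.trace} with $\alpha=0$, $m=0$, $s=1$, which already gives $\norm[F]{\varphi-\lproj[T]{l}\varphi}\lesssim h_T^{\nicefrac12}\norm[T]{\GRAD\varphi}$ in one stroke. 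Your route through the continuous trace inequality plus~\eqref{eq:approx} and~\eqref{eq:stab.lproj} is valid but essentially re-derives a special case of~\eqref{eq:approx.trace} from scratch.
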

\begin{proof}
  We abridge as $A\lesssim B$ the inequality $A\le cB$ with multiplicative constant $c>0$ having the same dependencies as $C$ in~\eqref{eq:magic.boundary}.
  Let $F\in\Fh[T]$ and observe that
  \begin{equation}\label{eq:magic.boundary:1}
    \begin{aligned}
      \norm[F]{\varphi-\lproj[F]{l}\varphi}
      &\le\norm[F]{\varphi-\lproj[T]{l}\varphi} + \norm[F]{\lproj[F]{l}(\lproj[T]{l}\varphi-\varphi)}
      \\
      &\le 2\norm[F]{\varphi-\lproj[T]{l}\varphi}
      \lesssim h_T^{\nicefrac12}\norm[T]{\GRAD\varphi},
    \end{aligned}
  \end{equation}  
  where we have inserted $\pm\lproj[T]{l}\varphi$ and used the triangle inequality to infer the first bound,
  we have used the $L^2(F)$-boundedness of $\lproj[F]{l}$ to infer the second,
  and invoked~\eqref{eq:approx.trace} with $\alpha=0$, $m=0$, and $s=1$ to conclude.
  Using the fact that $h_T/h_F\lesssim 1$ owing to~\eqref{eq:hT_hF} gives the desired result.\qed
\end{proof}

\begin{svgraybox}
  \begin{theorem}[A posteriori error lower bound]\label{thm:poisson:apost.lower.bound}
    Under the assumptions of Theorem~\ref{thm:poisson:apost.upper.bound}, 
    and further assuming, for the sake of simplicity,
    \begin{inparaenum}[(i)]
    \item that the local stabilization bilinear form $\mathrm{s}_T$ is given by~\eqref{eq:sT.vem} for all $T\in\Th$,
    \item that $u_h^*$ is obtained applying the node-averaging operator to $\ph\uu[h]$ on $\Th$ if $\Th$ is matching simplicial or on the simplicial submesh of Definition~\ref{def:mesh.reg} if this is not the case, and
    \item that $f \in \Poly{k+1}(\Th)$,
    \end{inparaenum}
    it holds for all $T\in\Th$,
    \begin{subequations}\label{eq:lower_bound}
      \begin{align}
        \est[T]{nc} &\le C  \left(
        \norm[\mathcal{N},T]{\GRADh (\ph\uu[h] -u)} + \seminorm[\mathrm{s},\mathcal{N},T]{\uu[h]}
        \right), \label{eq:lower_bound:est_nc}
        \\
        \est{res} &\le C \norm[T]{\GRAD (\pT\uu[T]-u_{\st T})},\label{eq:lower_bound:est_res}
        \\
        \est{sta} &\le C \seminorm[\mathrm{s},T]{\uu},\label{eq:lower_bound:est_sta}
      \end{align}
    \end{subequations}
    where $C>0$ is a real number possibly depending on $d$, $\varrho$, and on $k$ but independent of both $h$ and $T$.
    For all $T\in\Th$, $\norm[\mathcal{N},T]{{\cdot}}$ denotes the $L^2$-norm on the union of the elements in $\Thh{\mathcal{N}}{T}$ and we have set
    $$
    \seminorm[\mathrm{s},T]{\uu} = \mathrm{s}_T(\uu,\uu)^{\nicefrac12},\qquad
    \seminorm[\mathrm{s},\mathcal{N},T]{\uu[h]}^2
    \eqbydef \sum_{T'\in\Thh{\mathcal{N}}{T}} \seminorm[\mathrm{s},T']{\uu}^2.
    $$ 
  \end{theorem}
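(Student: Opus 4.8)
The plan is to establish the three lower bounds \eqref{eq:lower_bound:est_nc}--\eqref{eq:lower_bound:est_sta} in order of increasing difficulty, starting from the stabilization estimator, which is essentially immediate, then the residual estimator via a standard bubble-function argument, and finally the nonconformity estimator, which will require the node-averaging estimate \eqref{eq:osw} together with a detour through the discrete energy norm.

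First I would treat $\est{sta}$. By the definition \eqref{eq:def:est_sta}, $\est{sta} = C_{{\rm F},T}^{\nicefrac12} h_T^{\nicefrac12}(\sum_{F\in\Fh[T]}\norm[F]{R_{TF}^k\uu}^2)^{\nicefrac12}$. Since $\mathrm{s}_T$ is given by \eqref{eq:sT.vem}, the reformulation \eqref{eq:sT.RTF} together with the explicit expression for $R_{TF}^k$ (obtained by inverting the boundary mass matrix against the face-based residuals $\delta_{TF}^k$) lets one bound $h_F^{\nicefrac12}\norm[F]{R_{TF}^k\uu}$ by $h_F^{-\nicefrac12}\norm[F]{\delta_{TF}^k\uu}$ up to a constant depending only on $d$, $\varrho$, $k$; squaring, summing over $F\in\Fh[T]$, using \eqref{eq:hT_hF} to replace $h_T$ by $h_F$, and comparing with the definition of $\mathrm{s}_T$ in \eqref{eq:sT.vem} yields $\est{sta}\lesssim \mathrm{s}_T(\uu,\uu)^{\nicefrac12} = \seminorm[\mathrm{s},T]{\uu}$.

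Next I would treat $\est{res}$. Writing $g_T\eqbydef (f+\LAPL\pT\uu[T]) - \lproj[T]{0}(f+\LAPL\pT\uu[T])$, which is a polynomial on $T$ since $f\in\Poly{k+1}(\Th)$, the goal is to bound $C_{{\rm P},T}h_T\norm[T]{g_T}$ by $\norm[T]{\GRAD(\pT\uu[T]-u_{\st T})}$. The key identity is the local balance \eqref{eq:balance.disc}: testing it against $v_T = g_T$ (which has zero mean on $T$, so the boundary term involving $S_{TF}$ can be controlled separately, or one uses the weak form \eqref{eq:upper_bound_residual2} restricted to a bubble supported in $T$) shows that $\norm[T]{g_T}^2$ equals a combination of $(\GRAD\pT\uu[T],\GRAD\varphi_b)_T$ and a stabilization contribution, where $\varphi_b$ is a suitable bubble function equivalent to $g_T$; using the inverse inequality $\norm[T]{\GRAD\varphi_b}\lesssim h_T^{-1}\norm[T]{\varphi_b}\lesssim h_T^{-1}\norm[T]{g_T}$ and the fact that $-(\GRAD u_{\st T},\GRAD\varphi_b)_T = -(f+\LAPL\pT\uu[T],\varphi_b)_T$ vanishes up to the mean-free part (here one invokes that $u_{\st T}\in H_0^1(\Omega)$ and $\LAPL(\pT\uu[T]-u_{\st T})$ tested against $\varphi_b$ reproduces $g_T$), one concludes $h_T\norm[T]{g_T}\lesssim \norm[T]{\GRAD(\pT\uu[T]-u_{\st T})}$. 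This is the classical Verfürth bubble argument, adapted to the HHO residual $f+\LAPL\pT\uu[T]$; the stabilization term picked up along the way is absorbed into \eqref{eq:lower_bound:est_sta} already proved, or more cleanly one notes it is controlled by the right-hand side of \eqref{eq:poisson:a.post}.

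Finally, for $\est{nc}$, I would use the definition \eqref{eq:def:est_nc}, the choice $u_h^* = \Osw[k+1]\ph\uu[h]$, and the triangle inequality to write $\est{nc} = \norm[T]{\GRAD(\pT\uu[T]-u_h^*)}$; inserting $\pm u$ and using an inverse inequality on the polynomial $\pT\uu[T]-u_h^*$ together with \eqref{eq:osw} applied to $v_h=\ph\uu[h]$ gives $\norm[T]{\GRAD(\pT\uu[T]-u_h^*)}^2\lesssim \norm[T]{\GRAD(\pT\uu[T]-u)}^2 + h_T^{-2}\sum_{F\in\Fhh{\mathcal{N}}{T}}h_F\norm[F]{\lproj[F]{k}\jump{\ph\uu[h]}}^2 + h_T^{-2}\sum_F h_F\norm[F]{\jump{\ph\uu[h]}-\lproj[F]{k}\jump{\ph\uu[h]}}^2$; the second sum is exactly $\sum_{T'}\seminorm[\mathrm{s},T']{\uu}^2$ up to the node-averaging constant (via the definition \eqref{eq:sT.vem} and the identities in Proposition~\ref{prop:sT'}), while the third sum is controlled by $\sum_{T'\in\Thh{\mathcal{N}}{T}}\norm[T']{\GRADh(\ph\uu[h]-u)}^2$ using the oscillation estimate \eqref{eq:magic.boundary} with $l=k$ and the continuity of the jump of $u$ (which is zero). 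Collecting these gives \eqref{eq:lower_bound:est_nc}. The main obstacle I anticipate is the bubble-function argument for $\est{res}$: one must be careful that $f+\LAPL\pT\uu[T]$ is a genuine polynomial (hence the hypothesis $f\in\Poly{k+1}(\Th)$) and that the boundary flux terms $S_{TF}$ appearing in \eqref{eq:balance.disc} are handled correctly when testing against an interior bubble — choosing a bubble that vanishes on $\partial T$ kills them, but then the inverse-inequality constants and the mean-free projection must be tracked to land precisely on $\norm[T]{\GRAD(\pT\uu[T]-u_{\st T})}$ rather than on a larger neighborhood norm.
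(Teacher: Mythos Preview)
Your treatment of $\est{nc}$ is essentially the paper's: inverse inequality on the polynomial $\pT\uu-u_h^*$, then \eqref{eq:osw}, then split $\jump{\ph\uu[h]}=\jump{\ph\uu[h]-u}$ into a projected part (controlled by the stabilization as in Lemma~\ref{lem:jump.est}) and an oscillation part (controlled via \eqref{eq:magic.boundary}). For $\est{sta}$ you take a different route, trying to express $R_{TF}^k$ explicitly in terms of $\delta_{TF}^k$; with $\mathrm{s}_T$ as in \eqref{eq:sT.vem} the relation \eqref{eq:RpT} also couples in $\delta_T^k$, so this is messier than you suggest. The paper instead reads \eqref{eq:RpT} with $\ual=-h_T\RpT\uu$ to get $\est{sta}^2=C_{{\rm F},T}\,\mathrm{s}_T(\uu,(0,-h_T\RpT\uu))$, applies Cauchy--Schwarz on $\mathrm{s}_T$ (property (S1)), bounds the second factor via (S2), and recovers $\est{sta}$ on the right to divide through. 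This works for any $\mathrm{s}_T$ satisfying (S1)--(S3) and avoids the explicit inversion.

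The genuine gap is $\est{res}$. You base the argument on the discrete balance \eqref{eq:balance.disc}, but that identity is only available for polynomial test functions $v_T\in\Poly{k}(T)$, and the bubble is not polynomial. Routing through \eqref{eq:upper_bound_residual2} instead does produce a stabilization term, and the bound \eqref{eq:lower_bound:est_res} as stated has \emph{only} $\norm[T]{\GRAD(\pT\uu-u)}$ on the right---no $\seminorm[\mathrm{s},T]{\uu}$---so ``absorbing it into \eqref{eq:lower_bound:est_sta}'' does not deliver the local estimate. The correct move is simpler: use the \emph{continuous} equation $f=-\LAPL u$ rather than the discrete one. On each simplex $\tau\in\fTh[T]$ of the submesh, with $r_T\eqbydef f+\LAPL\pT\uu$ and $\psi_\tau\eqbydef b_\tau r_T\in H_0^1(\tau)$, one has $(r_T,\psi_\tau)_\tau=(-\LAPL(u-\pT\uu),\psi_\tau)_\tau=(\GRAD(u-\pT\uu),\GRAD\psi_\tau)_\tau$, with no boundary and no stabilization contribution; summing over $\tau\subset T$, the inverse inequality and $\norm[\tau]{\psi_\tau}\lesssim\norm[\tau]{r_T}$ give $h_T\norm[T]{r_T}\lesssim\norm[T]{\GRAD(\pT\uu-u)}$ directly. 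Note also that on a polytopal element the bubble must live on the simplices $\tau$, not on $T$ itself---a point your sketch omits.
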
  
\end{svgraybox}
\begin{proof}
  Let a mesh element $T\in\Th$ be fixed. 
  In the proof, we abridge as $A\lesssim B$ the inequality $A\le cB$ with multiplicative constant $c>0$ having the same dependencies as $C$ in~\eqref{eq:lower_bound}.

  \begin{asparaenum}[(i)]
  \item \emph{Bound~\eqref{eq:lower_bound:est_nc} on the nonconformity estimator.}
    Using a local inverse inequality (see, e.g.,~\cite[Lemma~1.44]{Di-Pietro.Ern:12}) and the relation \eqref{eq:osw}, we infer from~\eqref{eq:def:est_nc} that 
    \begin{equation}\label{eq:lower_bound:est_nc_1}
      \est[T]{nc}^2 \lesssim h_{T}^{-2} \norm[T]{\pT\uu[T]-u_h^*}^2
      \lesssim \sum_{F\in \Fhh{\mathcal{N}}{T}} h_{F}^{-1} \norm[F]{ \jump{\ph\uu[h]} }^2,
    \end{equation}
    where we have used the fact that, owing to mesh regularity, $h_F\lesssim h_T$ for all $F\in\Fhh{\mathcal{N}}{T}$.
    Using the fact $\jump{u}=0$ for all $F\in\Fh$ (see, e.g.,~\cite[Lemma~4.3]{Di-Pietro.Ern:12}) to write ${\jump{\ph\uu[h]-u}}$ instead of $\jump{\ph\uu[h]}$, inserting $\lproj[F]{k}\jump{\ph\uu[h]}-\lproj[F]{k}\jump{\ph\uu[h]-u}=0$ inside the norm, and using the triangle inequality, we have for all $F\in\Fhh{\mathcal{N}}{T}$,
    $$
    \begin{aligned}
      \norm[F]{ \jump{\ph\uu[h]} } &\le
      \norm[F]{\jump{\ph\uu[h]-u} - \lproj[F]{k}{\jump{\ph\uu[h]-u}}}
      + \norm[F]{\lproj[F]{k}{\jump{\ph\uu[h]}}}
      \\
     &\le \sum_{T\in\Th[F]}\norm[F]{(\pT\uu-u) - \lproj[F]{k}(\pT\uu-u)}
      + \norm[F]{\lproj[F]{k}{\jump{\ph\uu[h]}}},
    \end{aligned}
    $$
    where we have expanded the jump according to its definition~\eqref{eq:def.jump} and used a triangle inequality to pass to the second line.
    Plugging the above bound into~\eqref{eq:lower_bound:est_nc_1}, and using multiple times~\eqref{eq:magic.boundary} with $\varphi=\pT\uu-u$ for $T\in\Thh{\mathcal{N}}{T}$, we arrive at
    $$
    \est[T]{nc}^2 \lesssim
    \norm[\mathcal{N},T]{\GRAD(\pT\uu-u)}^2
    + \sum_{F\in\Fhh{\mathcal{N}}{T}} h_F^{-1}\norm[F]{\lproj[F]{k}{\jump{\ph\uu[h]}}}^2.
    $$
    To conclude, we proceed as in the proof of Lemma~\ref{lem:jump.est} to prove that the last term is bounded by $\seminorm[\mathrm{s},\mathcal{N},T]{\uu[h]}^2$ up to a constant independent of $h$.

  \item \emph{Bound~\eqref{eq:lower_bound:est_res} on the residual estimator.}
    We use classical bubble function techniques, see e.g.~\cite{Verfurth:96}.
    For the sake of brevity, we let $r_T\eqbydef f_{|T}+\LAPL\pT\uu[T]$.
    Denote by $\fTh$ the simplicial submesh of $\Th$ introduced in Definition~\ref{def:mesh.reg}, and let $\fTh[T]\eqbydef\left\{\tau\in\fTh\st\tau\subset T\right\}$, the set of simplices contained in $T$.
    For all $\tau\in\fTh[T]$, we denote by $b_\tau\in H_0^1(\tau)$ the element bubble function equal to the product of barycentric coordinates of $\tau$ and rescaled so as to take the value 1 at the center of gravity of $\tau$.
    Letting $\psi_\tau \eqbydef b_\tau r_T$ for all $\tau\in\fTh[T]$, the following properties hold \cite{Verfurth:96}:\\
    \begin{subequations}
      \noindent\begin{tabularx}{\textwidth}{@{}XXX@{}}
        \begin{equation}
          \text{$\psi_\tau=0$ on $\partial \tau$},\label{eq:bubble:1}
        \end{equation} & 
        \begin{equation}
          \norm[\tau]{ r_T }^2 \lesssim (r_T, \psi_\tau )_\tau,\label{eq:bubble:2}
        \end{equation} & 
        \begin{equation}
          \norm[\tau]{\psi_\tau} \lesssim \norm[\tau]{r_T}.\label{eq:bubble:3}
        \end{equation}
      \end{tabularx}
    \end{subequations}    
    We have that
    \begin{equation}	\label{eq:lower_bound:est_res_1}
      \begin{aligned}
	\norm[T]{r_T}^2
        &= \sum_{\tau\in\fTh[T]} \norm[\tau]{r_T}^2 
	\lesssim \sum_{\tau\in\fTh[T]} (r_T, \psi_\tau )_\tau 
        \\
	&= \sum_{\tau\in\fTh[T]} (\GRAD(u-\pT\uu[T]),\GRAD\psi_\tau)_\tau
        \\
	&\le \norm[T]{\GRAD(u-\pT\uu)} \left(\sum_{\tau\in\fTh[T]} h_\tau^{-2}\norm[\tau]{\psi_\tau}^2\right)^{\nicefrac12}
        \\
	&\lesssim h_T^{-1} \norm[T]{\GRAD(u-\pT\uu)}\norm[T]{r_T}, 
      \end{aligned}
    \end{equation}	
    where we have used property~\eqref{eq:bubble:2}  in the first line,
    the fact that $f=-\LAPL u$ together with an integration by parts and property~\eqref{eq:bubble:1} to pass to the second line,
    the Cauchy--Schwarz inequality together with a local inverse inequality (see, e.g.,~\cite[Lemma~1.44]{Di-Pietro.Ern:12}) to pass to the third line,
    and~\eqref{eq:bubble:3} together with the fact that $h_\tau^{-1}\le(\varrho h_T)^{-1}$ for all $\tau\in\fTh[T]$ (see Definition \ref{def:mesh.reg}) to conclude.
Recalling the definition~\eqref{eq:def:est_res} of the residual estimator, observing that $\norm[T]{r_T-\lproj[T]{0} r_T}\le\norm[T]{r_T}$ as a result of the triangle inequality followed by the $L^2(T)$-boundedness of $\lproj[T]{0}$, and using \eqref{eq:lower_bound:est_res_1}, the bound~\eqref{eq:lower_bound:est_res} follows.  

  \item \emph{Bound~\eqref{eq:lower_bound:est_sta} on the stabilization estimator.}
    Using the definition \eqref{eq:RpT} of the boundary residual operator $\RpT$ with $\uv=\uu$ and $\ual={-h_T\RpT\uu}={(-h_T R_{TF}^k\uu)_{F\in\Fh[T]}}$, the stabilization estimator \eqref{eq:def:est_sta} can be bounded as follows:
    \begin{equation} \label{eq:lower_bound:est_sta1}
      \est{sta}^2 = C_{{\rm F},T} \mathrm{s}_{T}(\uu, (0,-h_T\RpT\uu))  
      \lesssim\seminorm[\mathrm{s},T]{\uu[T]} \seminorm[\mathrm{s},T]{(0,-h_T\RpT\uu)}.
    \end{equation}
    On the other hand, from property (S2) in Assumption \ref{ass:sT}, the relation \eqref{eq:hT_hF}, and the definition \eqref{eq:def:est_sta} of $\est{sta}$, it is inferred that 
    $$
    \seminorm[\mathrm{s},T]{(0,-h_T\RpT\uu)}
    \le \eta^{\nicefrac12} \left(\sum_{F\in\Fh[T]}h_{F}^{-1}\norm[F]{h_T R_{TF}^k\uu}^2\right)^{\nicefrac12}
    \le \eta^{\nicefrac12}\rho^{-1}C_{{\rm F},T}^{-\nicefrac12}\est{sta}.
    $$
    Using this estimate to bound the right-hand side of~\eqref{eq:lower_bound:est_sta1},~\eqref{eq:lower_bound:est_sta} follows.\qed
  \end{asparaenum}
\end{proof} 

\begin{corollary}[Global lower bound] 
  Under the assumptions of Theorem~\ref{thm:poisson:apost.lower.bound}, there exists a constant $C$ independent of $h$, but possibly depending on $d$, $\varrho$ and $k$, such that 
  \begin{equation*} 
    \left[
      \sum_{T\in\Th}\left(
      \est{nc}^2 + (\est{res}+\est{sta})^2
      \right)
      \right]^{\nicefrac12}
    \le C \left(\norm{\GRADh(\ph\uu[h]-u)} + \seminorm[\mathrm{s}, h]{\uu[h]}\right).
  \end{equation*}	
\end{corollary}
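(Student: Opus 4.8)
The plan is to obtain the global lower bound by squaring the three local estimates \eqref{eq:lower_bound:est_nc}--\eqref{eq:lower_bound:est_sta} of Theorem~\ref{thm:poisson:apost.lower.bound}, summing them over $T\in\Th$, and controlling the resulting sums by the global quantities $\norm{\GRADh(\ph\uu[h]-u)}$ and $\seminorm[\mathrm{s},h]{\uu[h]}$. Throughout, I abridge as $A\lesssim B$ the inequality $A\le cB$ with $c>0$ having the same dependencies as $C$ in the statement. The only ingredient that is not a mechanical rearrangement is a finite-overlap argument needed to absorb the neighborhood terms $\norm[\mathcal{N},T]{{\cdot}}$ and $\seminorm[\mathrm{s},\mathcal{N},T]{{\cdot}}$ that appear on the right-hand side of \eqref{eq:lower_bound:est_nc}.

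First I would treat the nonconformity estimator. Squaring \eqref{eq:lower_bound:est_nc} and using $(a+b)^2\le 2a^2+2b^2$ gives $\est{nc}^2\lesssim\norm[\mathcal{N},T]{\GRADh(\ph\uu[h]-u)}^2+\seminorm[\mathrm{s},\mathcal{N},T]{\uu[h]}^2$ for all $T\in\Th$. By the definitions of $\norm[\mathcal{N},T]{{\cdot}}$ and $\seminorm[\mathrm{s},\mathcal{N},T]{{\cdot}}$ recalled in Theorem~\ref{thm:poisson:apost.lower.bound}, the right-hand side is a sum over $T'\in\Thh{\mathcal{N}}{T}$ of the element contributions $\norm[T']{\GRADh(\ph\uu[h]-u)}^2$ and $\seminorm[\mathrm{s},T']{\uu[T']}^2$. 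Summing over $T\in\Th$ and exchanging the order of summation, a fixed element $T'$ is counted only for those $T\in\Th$ with $T'\in\Thh{\mathcal{N}}{T}$, i.e. for the $T$ sharing at least one node with $T'$; the cardinality of this set is bounded uniformly in $h$ and $T'$ on regular mesh sequences (it is controlled, via the matching simplicial submesh of Definition~\ref{def:mesh.reg} and~\eqref{eq:Np}, by a constant depending only on $d$ and $\varrho$). Hence $\sum_{T\in\Th}\est{nc}^2\lesssim\norm{\GRADh(\ph\uu[h]-u)}^2+\seminorm[\mathrm{s},h]{\uu[h]}^2$, using in the last term the definition~\eqref{eq:ah} of $\mathrm{s}_h$ and of its induced seminorm.

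Next I would dispose of the residual and stabilization estimators, which is immediate since the right-hand sides of \eqref{eq:lower_bound:est_res} and \eqref{eq:lower_bound:est_sta} are already purely local: squaring, using $(\est{res}+\est{sta})^2\le 2\est{res}^2+2\est{sta}^2$, and summing over $T\in\Th$ yields $\sum_{T\in\Th}(\est{res}+\est{sta})^2\lesssim\sum_{T\in\Th}\norm[T]{\GRAD(\pT\uu[T]-u_{\st T})}^2+\sum_{T\in\Th}\seminorm[\mathrm{s},T]{\uu[T]}^2=\norm{\GRADh(\ph\uu[h]-u)}^2+\seminorm[\mathrm{s},h]{\uu[h]}^2$, where I recalled the definition of the broken gradient $\GRADh$ and again that of the seminorm attached to $\mathrm{s}_h$.

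Adding the two contributions, taking square roots, and using $\sqrt{a^2+b^2}\le a+b$ for $a,b\ge 0$ gives the asserted inequality. The main obstacle — and it is a mild one — is precisely the finite-overlap bound on $\card\{T\in\Th\st T'\in\Thh{\mathcal{N}}{T}\}$: this is what keeps the constant independent of $h$, and it is established exactly like the bound~\eqref{eq:Np} on the number of faces of a mesh element, i.e. by going through the matching simplicial submesh and invoking the regularity of the mesh sequence.
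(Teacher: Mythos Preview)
Your proof is correct. The paper states this corollary without proof, treating it as a routine consequence of the local bounds~\eqref{eq:lower_bound}; your argument---square the local estimates, sum over $T\in\Th$, and invoke finite overlap of the patches $\Thh{\mathcal{N}}{T}$ on regular mesh sequences to absorb the neighborhood terms from~\eqref{eq:lower_bound:est_nc}---is precisely the standard one that the omission implies. The symmetry observation $T'\in\Thh{\mathcal{N}}{T}\Leftrightarrow T\in\Thh{\mathcal{N}}{T'}$ that reduces the overlap count to bounding $\card(\Thh{\mathcal{N}}{T'})$ is the right way to organize the first step, and the remaining steps are indeed mechanical.
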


\subsection{Numerical examples}\label{sec:basics:num.ex}

We illustrate the numerical performance of the HHO method on a set of model problems.

\subsubsection{Two-dimensional test case}\label{sec:poisson:num:2d}

The first test case, taken from~\cite{Di-Pietro.Ern.ea:14}, aims at demonstrating the estimated orders of convergence in two space dimensions.
We solve the Dirichlet problem in the unit square $\Omega=(0,1)^2$ with
\begin{equation}\label{eq:poisson:num:2d:ex.sol}
  u(\vec{x}) = \sin(\pi x_1)\sin(\pi x_2),
\end{equation}
and corresponding right-hand side $f(\vec{x})=2\pi^2\sin(\pi x_1)\sin(\pi x_2)$ on the triangular and polygonal meshes of Fig.~\ref{fig:meshes:triangular} and~\ref{fig:meshes:polygonal}.
Fig.~\ref{fig:poisson.2d} displays convergence results for both mesh families and polynomial degrees up to 4.
Recalling~\eqref{eq:en.err.est:T1} and~\eqref{eq:supercloseness}, we measure the energy- and $L^2$-errors by the quantities $\norm[\mathrm{a},h]{\Ih u - \uu[h]}$ and $\norm{\lproj{k}u-u_h}$, respectively.
In all cases, the numerical results show asymptotic convergence rates that match those predicted by the theory.

\begin{figure}\centering
  \ref{conv.legend.poisson.2d}
  \vspace{0.5cm}\\
  \begin{minipage}[b]{0.45\linewidth}\centering
    \begin{tikzpicture}[scale=0.65]
      \begin{loglogaxis}[ legend columns=-1, legend to name=conv.legend.poisson.2d ]
        \addplot table[x=meshsize,y=err_Gh] {pho_0_mesh1.dat};
        \addplot table[x=meshsize,y=err_Gh] {pho_1_mesh1.dat};
        \addplot table[x=meshsize,y=err_Gh] {pho_2_mesh1.dat};
        \addplot table[x=meshsize,y=err_Gh] {pho_3_mesh1.dat};
        \addplot table[x=meshsize,y=err_Gh] {pho_4_mesh1.dat};
        \logLogSlopeTriangle{0.90}{0.4}{0.1}{1}{black};
        \logLogSlopeTriangle{0.90}{0.4}{0.1}{2}{black};
        \logLogSlopeTriangle{0.90}{0.4}{0.1}{3}{black};
        \logLogSlopeTriangle{0.90}{0.4}{0.1}{4}{black};
        \logLogSlopeTriangle{0.90}{0.4}{0.1}{5}{black};
        \legend{$k=0$,$k=1$,$k=2$,$k=3$,$k=4$};
      \end{loglogaxis}
    \end{tikzpicture}
    \subcaption{$\norm[\mathrm{a},h]{\Ih u - \uu[h]}$ vs. $h$, triangular mesh\label{fig:convergence:triangular:flux}}
  \end{minipage}
  \hspace{0.5cm}
  \begin{minipage}[b]{0.45\linewidth}\centering
    \begin{tikzpicture}[scale=0.65]
      \begin{loglogaxis}
        \addplot table[x=meshsize,y=err_Gh] {pho_0_pi6_tiltedhexagonal.dat};
        \addplot table[x=meshsize,y=err_Gh] {pho_1_pi6_tiltedhexagonal.dat};
        \addplot table[x=meshsize,y=err_Gh] {pho_2_pi6_tiltedhexagonal.dat};
        \addplot table[x=meshsize,y=err_Gh] {pho_3_pi6_tiltedhexagonal.dat};
        \addplot table[x=meshsize,y=err_Gh] {pho_4_pi6_tiltedhexagonal.dat};
        \logLogSlopeTriangle{0.90}{0.4}{0.1}{1}{black};
        \logLogSlopeTriangle{0.90}{0.4}{0.1}{2}{black};
        \logLogSlopeTriangle{0.90}{0.4}{0.1}{3}{black};
        \logLogSlopeTriangle{0.90}{0.4}{0.1}{4}{black};
        \logLogSlopeTriangle{0.90}{0.4}{0.1}{5}{black};
      \end{loglogaxis}
    \end{tikzpicture}
    \subcaption{$\norm[\mathrm{a},h]{\Ih u - \uu[h]}$ vs. $h$ polygonal mesh\label{fig:convergence:polygonal:flux}}
  \end{minipage}
  \vspace{0.5cm}\\
  \begin{minipage}[b]{0.45\linewidth}\centering
    \begin{tikzpicture}[scale=0.65]
      \begin{loglogaxis}
        \addplot table[x=meshsize,y=err_uh] {pho_0_mesh1.dat};
        \addplot table[x=meshsize,y=err_uh] {pho_1_mesh1.dat};
        \addplot table[x=meshsize,y=err_uh] {pho_2_mesh1.dat};
        \addplot table[x=meshsize,y=err_uh] {pho_3_mesh1.dat};
        \addplot table[x=meshsize,y=err_uh] {pho_4_mesh1.dat};
        \logLogSlopeTriangle{0.90}{0.4}{0.1}{2}{black};
        \logLogSlopeTriangle{0.90}{0.4}{0.1}{3}{black};
        \logLogSlopeTriangle{0.90}{0.4}{0.1}{4}{black};
        \logLogSlopeTriangle{0.90}{0.4}{0.1}{5}{black};
        \logLogSlopeTriangle{0.90}{0.4}{0.1}{6}{black};
      \end{loglogaxis}
    \end{tikzpicture}
    \subcaption{$\norm{\lproj{k}u-u_h}$ vs. $h$, triangular mesh\label{fig:convergence:triangular:L2}}
  \end{minipage}
  \hspace{0.5cm}  
  \begin{minipage}[b]{0.45\linewidth}\centering
    \begin{tikzpicture}[scale=0.65]
      \begin{loglogaxis}
        \addplot table[x=meshsize,y=err_uh] {pho_0_pi6_tiltedhexagonal.dat};
        \addplot table[x=meshsize,y=err_uh] {pho_1_pi6_tiltedhexagonal.dat};
        \addplot table[x=meshsize,y=err_uh] {pho_2_pi6_tiltedhexagonal.dat};
        \addplot table[x=meshsize,y=err_uh] {pho_3_pi6_tiltedhexagonal.dat};
        \addplot table[x=meshsize,y=err_uh] {pho_4_pi6_tiltedhexagonal.dat};
        \logLogSlopeTriangle{0.90}{0.4}{0.1}{2}{black};
        \logLogSlopeTriangle{0.90}{0.4}{0.1}{3}{black};
        \logLogSlopeTriangle{0.90}{0.4}{0.1}{4}{black};
        \logLogSlopeTriangle{0.90}{0.4}{0.1}{5}{black};
        \logLogSlopeTriangle{0.90}{0.4}{0.1}{6}{black};
      \end{loglogaxis}
    \end{tikzpicture}
    \subcaption{$\norm{\lproj{k}u-u_h}$ vs. $h$, polygonal mesh\label{fig:convergence:polygonal:L2}}
  \end{minipage}
  \caption{Error vs. $h$ for the test case of Section~\ref{sec:poisson:num:2d}.\label{fig:poisson.2d}}
\end{figure}

\subsubsection{Three-dimensional test case}\label{sec:poisson:num:3d.smooth}

The second test case, taken from~\cite{Di-Pietro.Specogna:16}, demonstrates the orders of convergence in three space dimensions.
We solve the Dirichlet problem in the unit cube $\Omega=(0,1)^3$ with
$$
u(\vec{x}) = \sin(\pi x_1)\sin(\pi x_2)\sin(\pi x_3),
$$
and corresponding right-hand side $f(\vec{x})=3\pi^2\sin(\pi x_1)\sin(\pi x_2)\sin(\pi x_3)$ on a matching simplicial mesh family for polynomial degrees up to 3.
The numerical results displayed in Fig.~\ref{fig:poisson.3d.smooth:err.vs.h} show asymptotic convergence rates that match those predicted by~\eqref{eq:poisson:en.err.est} and~\eqref{eq:poisson:l2.err.est}.
In Fig.~\ref{fig:poisson.3d.smooth:err.vs.t_tot} we display the error versus the total computational time $t_{\rm tot}$ (including the pre-processing, solution, and post-processing).
It can be seen that the energy- and $L^2$-errors optimally scale as $t_{\rm tot}^{\nicefrac{(k+1)}{d}}$ and $t_{\rm tot}^{\nicefrac{(k+2)}{d}}$ (with $d=3$), respectively.

\begin{figure}\centering
  \ref{conv.legend.poisson.3d.smooth}
  \vspace{0.5cm}\\
  \begin{minipage}[b]{0.45\linewidth}\centering
    \begin{tikzpicture}[scale=0.65]
      \begin{loglogaxis}[ legend columns=-1, legend to name=conv.legend.poisson.3d.smooth ]
        \addplot table[x=hmax,y=energy_error] {sinsinsin0.txt};
        \addplot table[x=hmax,y=energy_error] {sinsinsin1.txt};
        \addplot table[x=hmax,y=energy_error] {sinsinsin2.txt};
        \addplot table[x=hmax,y=energy_error] {sinsinsin3.txt};
        \logLogSlopeTriangle{0.90}{0.4}{0.1}{1}{black};
        \logLogSlopeTriangle{0.90}{0.4}{0.1}{2}{black};
        \logLogSlopeTriangle{0.90}{0.4}{0.1}{3}{black};
        \logLogSlopeTriangle{0.90}{0.4}{0.1}{4}{black};
        \legend{$k=0$,$k=1$,$k=2$,$k=3$};
      \end{loglogaxis}
    \end{tikzpicture}
    \subcaption{$\norm{\GRADh(\ph\uu[h]-u)}$ vs. $h$}      
  \end{minipage}
  \hspace{0.5cm}
  \begin{minipage}[b]{0.45\linewidth}\centering
    \begin{tikzpicture}[scale=0.65]
      \begin{loglogaxis}
        \addplot table[x=hmax,y=L2_error] {sinsinsin0.txt};
        \addplot table[x=hmax,y=L2_error] {sinsinsin1.txt};
        \addplot table[x=hmax,y=L2_error] {sinsinsin2.txt};
        \addplot table[x=hmax,y=L2_error] {sinsinsin3.txt};
        \logLogSlopeTriangle{0.90}{0.4}{0.1}{2}{black};
        \logLogSlopeTriangle{0.90}{0.4}{0.1}{3}{black};
        \logLogSlopeTriangle{0.90}{0.4}{0.1}{4}{black};
        \logLogSlopeTriangle{0.90}{0.4}{0.1}{5}{black};
      \end{loglogaxis}
    \end{tikzpicture}
    \subcaption{$\norm{\ph\uu[h]-u}$ vs. $h$}
  \end{minipage}
  \caption{Error vs. $h$ for the test case of Section~\ref{sec:poisson:num:3d.smooth}.\label{fig:poisson.3d.smooth:err.vs.h}}
\end{figure}

\begin{figure}\centering
  \ref{conv.legend.poisson.3d.smooth.time}
  \vspace{0.5cm}\\
  \begin{minipage}[b]{0.45\linewidth}\centering
    \begin{tikzpicture}[scale=0.65]
      \begin{loglogaxis}[ legend columns=-1, legend to name=conv.legend.poisson.3d.smooth.time ]
        \addplot table[x=t_tot,y=energy_error] {sinsinsin0.txt};
        \addplot table[x=t_tot,y=energy_error] {sinsinsin1.txt};
        \addplot table[x=t_tot,y=energy_error] {sinsinsin2.txt};
        \addplot table[x=t_tot,y=energy_error] {sinsinsin3.txt};
        \reverseLogLogSlopeTriangle{0.50}{0.4}{0.1}{1/3}{black};
        \reverseLogLogSlopeTriangle{0.50}{0.4}{0.1}{2/3}{black};
        \reverseLogLogSlopeTriangle{0.50}{0.4}{0.1}{1}{black};
        \reverseLogLogSlopeTriangle{0.50}{0.4}{0.1}{4/3}{black};
        \legend{$k=0$,$k=1$,$k=2$,$k=3$};
      \end{loglogaxis}
    \end{tikzpicture}
    \subcaption{$\norm{\GRADh(\ph\uu[h]-u)}$ vs. $t_{\rm tot}$}
  \end{minipage}
  \hspace{0.5cm}
  \begin{minipage}[b]{0.45\linewidth}\centering
    \begin{tikzpicture}[scale=0.65]
      \begin{loglogaxis}
        \addplot table[x=t_tot,y=L2_error] {sinsinsin0.txt};
        \addplot table[x=t_tot,y=L2_error] {sinsinsin1.txt};
        \addplot table[x=t_tot,y=L2_error] {sinsinsin2.txt};
        \addplot table[x=t_tot,y=L2_error] {sinsinsin3.txt};
        \reverseLogLogSlopeTriangle{0.50}{0.4}{0.1}{2/3}{black};
        \reverseLogLogSlopeTriangle{0.50}{0.4}{0.1}{1}{black};
        \reverseLogLogSlopeTriangle{0.50}{0.4}{0.1}{4/3}{black};
        \reverseLogLogSlopeTriangle{0.50}{0.4}{0.1}{5/3}{black};
        \legend{$k=0$,$k=1$,$k=2$,$k=3$};
      \end{loglogaxis}
    \end{tikzpicture}
    \subcaption{$\norm{\ph\uu[h]-u}$ vs. $t_{\rm tot}$}
  \end{minipage}
  \caption{Error vs. total computational time for the test case of Section~\ref{sec:poisson:num:3d.smooth}.\label{fig:poisson.3d.smooth:err.vs.t_tot}}
\end{figure}

\subsubsection{Three-dimensional case with adaptive mesh refinement}\label{sec:poisson:num:3d.adap}

The third test case, known as Fichera corner benchmark, is taken from~\cite{Di-Pietro.Specogna:16} and is based on the exact solution of~\cite{Fichera:75} on the etched three-dimensional domain $\Omega=(-1,1)^3\setminus[0,1]^3$:
$$
u(\vec{x})= \sqrt[4]{x_1^2+x_2^2+x_3^2},
$$
with right-hand side $f(\vec{x})=-\nicefrac34(x_1^2+x_2^2+x_3^2)^{-\nicefrac34}$.
In this case, the gradient of the solution has a singularity in the origin which prevents the method from attaining optimal convergence rates even for $k=0$.
In Fig.~\ref{fig:poisson:3d.adap} we show a computation comparing the numerical error versus $N_{\rm dof}$ (cf.~\eqref{eq:Ndof}) for the Fichera problem on uniformly and adaptively refined mesh sequences for polynomial degrees up to 3.
Clearly, the order of convergence is limited by the solution regularity when using uniformly refined meshes, while using adaptively refined meshes we recover optimal orders of convergence of $N_{\rm dof}^{\nicefrac{(k+1)}{d}}$ and $N_{\rm dof}^{\nicefrac{(k+2)}{d}}$  (with $d=3$) for the energy- and $L^2$-errors, respectively.

\begin{figure}\centering
  \ref{conv.legend.poisson.3d.adap}
  \vspace{0.5cm}\\
  \begin{minipage}[b]{0.45\linewidth}\centering
    \begin{tikzpicture}[scale=0.70]
      \begin{loglogaxis}[ legend columns=-1, legend to name={conv.legend.poisson.3d.adap} ]
        \addplot[color=blue!80!black,dashed,mark=*,mark options={solid}] table[x=ndofs,y=energy_error] {fichera0s.txt};
        \addplot[color=blue!80!black,mark=*,mark options={solid}] table[x=ndofs,y=energy_error] {fichera0a.txt};
        \addplot[color=red!80!black,dashed,mark=square*,mark options={solid}] table[x=ndofs,y=energy_error] {fichera1s.txt};
        \addplot[color=red!80!black,mark=square*,mark options={solid}] table[x=ndofs,y=energy_error] {fichera1a.txt};
        \addplot[color=brown!80!black,dashed,mark=otimes*,mark options={solid}] table[x=ndofs,y=energy_error] {fichera2s.txt};
        \addplot[color=brown!80!black,mark=otimes*,mark options={solid}] table[x=ndofs,y=energy_error] {fichera2a.txt};        
        \reverseLogLogSlopeTriangle{0.50}{0.4}{0.1}{1/3}{black};
        \reverseLogLogSlopeTriangle{0.50}{0.4}{0.1}{2/3}{black};
        \reverseLogLogSlopeTriangle{0.50}{0.4}{0.1}{1}{black};         
        \legend{$k=0$ un,$k=0$ ad,$k=1$ un,$k=1$ ad,$k=2$ un,$k=2$ ad};
      \end{loglogaxis}
    \end{tikzpicture}
    \subcaption{Energy-error vs. $N_{\rm dof}$}
  \end{minipage}
  \hspace{0.5cm}
  \begin{minipage}[b]{0.45\linewidth}
    \begin{tikzpicture}[scale=0.70]
      \begin{loglogaxis}
        \addplot[color=blue!80!black,dashed,mark=*,mark options={solid}] table[x=ndofs,y=L2_error] {fichera0s.txt};
        \addplot[color=blue!80!black,mark=*,mark options={solid}] table[x=ndofs,y=L2_error] {fichera0a.txt};
        \addplot[color=red!80!black,dashed,mark=square*,mark options={solid}] table[x=ndofs,y=L2_error] {fichera1s.txt};
        \addplot[color=red!80!black,mark=square*,mark options={solid}] table[x=ndofs,y=L2_error] {fichera1a.txt};
        \addplot[color=brown!80!black,dashed,mark=otimes*,mark options={solid}] table[x=ndofs,y=L2_error] {fichera2s.txt};
        \addplot[color=brown!80!black,mark=otimes*,mark options={solid}] table[x=ndofs,y=L2_error] {fichera2a.txt};
        \reverseLogLogSlopeTriangle{0.50}{0.4}{0.1}{2/3}{black};
        \reverseLogLogSlopeTriangle{0.50}{0.4}{0.1}{1}{black};
        \reverseLogLogSlopeTriangle{0.50}{0.4}{0.1}{4/3}{black};
      \end{loglogaxis}
    \end{tikzpicture}
    \subcaption{$L^2$-error vs. $N_{\rm dof}$}
  \end{minipage}
  \caption{Error vs. $N_{\rm dof}$ for the test case of Section~\ref{sec:poisson:num:3d.adap}.\label{fig:poisson:3d.adap}}
\end{figure}


\section{A nonlinear example: The $p$-Laplace equation}\label{sec:plap}

We consider in this section an extension of the HHO method to the $p$-Laplace equation.
This problem will be used to introduce the techniques for the discretization and analysis of nonlinear operators, as well as a set of functional analysis results of independent interest.
An additional interesting point is that the $p$-Laplace problem is naturally posed in a non-Hilbertian setting.
This will require to emulate a Sobolev structure at the discrete level.

Let $p\in (1,+\infty)$ be fixed, and set $p'\eqbydef\frac{p}{p-1}$.
The $p$-Laplace problem reads: Find $u:\Omega\to\Real$ such that
\begin{equation}\label{eq:plap:strong}
  \begin{alignedat}{2}
    -\DIV(\vec{\sigma}(\GRAD u)) &= f &\qquad&\text{in $\Omega$},
    \\
    u &= 0 &\qquad&\text{on $\partial\Omega$},
  \end{alignedat}  
\end{equation}
where $f\in L^{p'}(\Omega)$ is a volumetric source term and the function $\vec{\sigma}:\Real^d\to\Real^d$ is such that
\begin{equation}\label{eq:plap:sigma}
  \vec{\sigma}(\vec{\tau}) \eqbydef |\vec{\tau}|^{p-2}\vec{\tau}.
\end{equation}
The $p$-Laplace equation is a generalization of the Poisson problem considered in Section~\ref{sec:basics}, which corresponds to the choice $p=2$.

Classically, the weak formulation of problem~\eqref{eq:plap:strong} reads:
Find $u\in W^{1,p}_0(\Omega)$ such that, for all $v\in W^{1,p}_0(\Omega)$,
\begin{equation}\label{eq:plap:weak}
  a(u,v) = \int_\Omega f(\vec{x})v(\vec{x}){\rm d}\vec{x},
\end{equation}
where the function $a:W^{1,p}(\Omega)\times W^{1,p}(\Omega)\to\Real$ is such that
\begin{equation}\label{eq:plap:a}
  a(u,v)\eqbydef\int_\Omega\vec{\sigma}(\GRAD u(\vec{x}))\SCAL\GRAD v(\vec{x}){\rm d}\vec{x}.
\end{equation}
From this point on, to alleviate the notation, we omit both the dependence of the integrand on $\vec{x}$ and the differential from integrals.

\subsection{Discrete $W^{1,p}$-norms and Sobolev embeddings}\label{sec:plap:sobolev.embeddings}

In Section~\ref{sec:basics}, the discrete space $\UhD$ and the norm $\norm[1,h]{{\cdot}}$ have played the role of the Hilbert space $H_0^1(\Omega)$ and of the seminorm $\seminorm[H^1(\Omega)]{{\cdot}}$, respectively (notice that $\seminorm[H^1(\Omega)]{{\cdot}}$ is a norm on $H^1_0(\Omega)$ by virtue of the continuous Poincar\'{e} inequality).
For the $p$-Laplace equation, $\UhD$ will replace at the discrete level the Sobolev space $W^{1,p}_0(\Omega)$.
A good candidate for the role of the corresponding seminorm $\seminorm[W^{1,p}(\Omega)]{{\cdot}}$ is the map $\norm[1,p,h]{{\cdot}}$ such that, for all $\uv[h]\in\Uh$,
\begin{equation}\label{eq:norm1p.h}
  \norm[1,p,h]{\uv[h]}^p\eqbydef\sum_{T\in\Th}\norm[1,p,T]{\uv}^p,
\end{equation}
where, for all $T\in\Th$,
\begin{equation}\label{eq:norm1p.T}
  \norm[1,p,T]{\uv}^p\eqbydef
  \norm[L^p(T)^d]{\GRAD v_T}^p
  + \sum_{F\in\Fh[T]}h_F^{1-p}\norm[L^p(F)]{v_F-v_T}^p.
\end{equation}
The power of $h_F$ in the second term ensures that both contributions have the same scaling.
When $p=2$, we recover the seminorm $\norm[1,h]{{\cdot}}$ defined by~\eqref{eq:norm1h}.

The following discrete Sobolev embeddings are proved in~\cite[Proposition~5.4]{Di-Pietro.Droniou:16}.
The proof hinges on the results of~\cite[Theorem~6.1]{Di-Pietro.Ern:10} for broken polynomial spaces (based, in turn, on the techniques originally developed in~\cite{Eymard.Gallouet.ea:10} in the context of finite volume methods).
Their role in the analysis of HHO methods for problem~\eqref{eq:plap:weak} is discussed in Remark~\ref{rem:role.sobolev}.

\begin{svgraybox}
  \begin{theorem}[Discrete Sobolev embeddings]\label{thm:sobolev}
    Let a polynomial degree $k\ge 0$ and an index $p\in (1,+\infty)$ be fixed.
    Let $(\Mh)_{h\in{\cal H}}$ denote a regular sequence of meshes in the sense of Definition~\ref{def:mesh.reg}.
    Let $1\le q\le \frac{dp}{d-p}$ if $1\le p< d$ and $1\le q<+\infty$ if $p\ge d$. Then, there exists a real number $C>0$ only depending on $\Omega$, $\varrho$, $l$, $p$, and $q$ such that, for all $\uv[h]\in\UhD$,
    \begin{equation}\label{eq:sobolev}
      \norm[L^q(\Omega)]{v_h}\le C\norm[1,p,h]{\uv[h]}.
    \end{equation}
  \end{theorem}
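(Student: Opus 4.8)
The plan is to deduce \eqref{eq:sobolev} from the analogous embedding for broken polynomial spaces, the point being to transfer the inter-element jumps of the broken function $v_h$ onto the face unknowns of the HHO space. Concretely, I would start from \cite[Theorem~6.1]{Di-Pietro.Ern:10}: under the same hypotheses on $p$ and $q$, there is $C>0$ depending only on $\Omega$, $\varrho$, $k$, $p$, $q$ such that every $w_h\in\Poly{k}(\Th)$ satisfies
$$
\norm[L^q(\Omega)]{w_h}\le C\left(\norm[L^p(\Omega)^d]{\GRADh w_h}^p + \sum_{F\in\Fh}h_F^{1-p}\norm[L^p(F)]{\jump{w_h}}^p\right)^{\nicefrac{1}{p}},
$$
where $\jump{{\cdot}}$ is the jump operator of \eqref{eq:def.jump}, so that on a boundary face it reduces to the trace of $w_h$. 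Applying this with $w_h=v_h$, the whole task reduces to bounding the right-hand side by $\norm[1,p,h]{\uv[h]}$.

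For the gradient term there is nothing to prove, since it appears verbatim in \eqref{eq:norm1p.h}--\eqref{eq:norm1p.T}. For the jump term, fix $F\in\Fh$. If $F\in\Fhi$ with $F\subset\partial T_1\cap\partial T_2$, I insert $\pm v_F$ to write $\jump{v_h}=(\restrto{v_{T_1}}{F}-v_F)-(\restrto{v_{T_2}}{F}-v_F)$; the convexity bound $|a-b|^p\le 2^{p-1}(|a|^p+|b|^p)$ then gives
$$
h_F^{1-p}\norm[L^p(F)]{\jump{v_h}}^p\le 2^{p-1}\sum_{T\in\Th[F]}h_F^{1-p}\norm[L^p(F)]{\restrto{v_T}{F}-v_F}^p.
$$
If $F\in\Fhb$, then $v_F\equiv 0$ because $\uv[h]\in\UhD$, so $\jump{v_h}=\restrto{v_T}{F}=\restrto{v_T}{F}-v_F$ for the unique $T\in\Th[F]$, and the same inequality holds trivially. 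Summing over $F\in\Fh$ and exchanging the order of summation (using $T\in\Th[F]\iff F\in\Fh[T]$), I obtain
$$
\sum_{F\in\Fh}h_F^{1-p}\norm[L^p(F)]{\jump{v_h}}^p\le 2^{p-1}\sum_{T\in\Th}\sum_{F\in\Fh[T]}h_F^{1-p}\norm[L^p(F)]{v_F-\restrto{v_T}{F}}^p,
$$
and adding $\sum_{T\in\Th}\norm[L^p(T)^d]{\GRAD v_T}^p$ identifies the right-hand side, up to the factor $2^{p-1}$, with $\norm[1,p,h]{\uv[h]}^p$ by \eqref{eq:norm1p.T}. Combining with the broken-space embedding yields \eqref{eq:sobolev}.

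The only substantial ingredient in this argument is \cite[Theorem~6.1]{Di-Pietro.Ern:10} itself, proved by the finite-volume technique of \cite{Eymard.Gallouet.ea:10}: one controls $\norm[L^s(\Omega)]{w_h}$ for a well-chosen exponent $s$ by bounding, direction by direction, the variation of a suitable power of $w_h$ across the mesh faces, then concludes with H\"older's inequality and the mesh regularity encoded in Definition~\ref{def:mesh.reg}; the admissible range of $q$ and the split between $p<d$ and $p\ge d$ come out of this step. Everything I add on top is elementary — the single point worth flagging is the insertion of $\pm v_F$, which is precisely what makes the HHO seminorm \eqref{eq:norm1p.h} an upper bound for the broken-polynomial $W^{1,p}$-type seminorm, and which genuinely uses $\uv[h]\in\UhD$ (nonzero boundary face values would otherwise break the inequality, exactly as they do for the continuous $W^{1,p}_0$ Sobolev inequality).
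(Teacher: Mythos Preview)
Your argument is correct and matches the approach the paper outlines: the text does not prove this theorem in place but refers to \cite[Proposition~5.4]{Di-Pietro.Droniou:16}, stating explicitly that the proof hinges on \cite[Theorem~6.1]{Di-Pietro.Ern:10} for broken polynomial spaces together with the finite-volume techniques of \cite{Eymard.Gallouet.ea:10}. Your reduction---apply the broken-space embedding to $v_h$, then insert $\pm v_F$ on each face (using $v_F\equiv 0$ on $\Fhb$ since $\uv[h]\in\UhD$) and regroup element by element to recover $\norm[1,p,h]{\uv[h]}$---is precisely that route.
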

\end{svgraybox}

\begin{remark}[Discrete Poincar\'{e} inequality]
  The discrete Poincar\'{e} inequality~\eqref{eq:poincare} is a special case of Theorem~\ref{thm:sobolev} corresponding to $p=q=2$ (this choice is possible in any space dimension).
\end{remark}

\subsection{Discrete gradient and compactness}\label{sec:plap:compactness}

The analysis of numerical methods for linear problems is usually carried out in the spirit of the Lax--Richtmyer equivalence principle: ``For a consistent numerical method, stability is equivalent to convergence''; see for instance~\cite{Dahlquist:56} for a rigorous proof in the case of linear Cauchy problems.
When dealing with nonlinear problems, however, some form of compactness is also required; cf. Remark~\ref{rem:role.compactness} for further insight into this point.
In order to achieve it for problem~\eqref{eq:plap:weak}, we need to introduce a local gradient reconstruction slightly richer than $\GRAD\pT$; see~\eqref{eq:pT}.

Let a mesh element $T\in\Th$ be fixed.
By the principles illustrated in Section~\ref{sec:basics:local:ibp}, we define the local gradient reconstruction $\GT:\UT\to\Poly{k}(T)^d$ such that, for all $\uv[T]\in\UT$,
\begin{equation}\label{eq:GT}
  (\GT\uv[T],\vec{\tau})_T
  = -(v_T,\DIV\vec{\tau})_T + \sum_{F\in\Fh[T]}(v_F,\vec{\tau}\SCAL\normal_{TF})_F
  \quad\forall\vec{\tau}\in\Poly{k}(T)^d.
\end{equation}
Notice that here we reverted to the $L^2$-product notation instead of using integrals to emphasize the fact that the definition of $\GT$ is inherently $L^2$-based.
\begin{remark}[Relation between $\GT$ and $\pT$]
  Taking $\vec{\tau}=\GRAD w$ with $w\in\Poly{k+1}(T)$ in~\eqref{eq:GT} and comparing with~\eqref{eq:pT:1}, it is readily inferred that
  \begin{equation}\label{eq:GT.pT}
    (\GT\uv[T]-\GRAD\pT\uv[T],\GRAD w)_T=0\qquad\forall w\in\Poly{k+1}(T),
  \end{equation}
  i.e., $\GRAD\pT\uv[T]$ is the $L^2$-projection of $\GT\uv[T]$ on $\GRAD\Poly{k+1}(T)\subset\Poly{k}(T)^d$.
  In passing, we observe that for $k=0$, using the fact that $\GRAD\Poly{1}(T)=\Poly{0}(T)^d$,~\eqref{eq:GT.pT} implies that $\GT[0]\uv[T]=\GRAD\pT[1]\uv[T]$.
\end{remark}
Choosing a larger arrival space for $\GT$ has the effect of modifying the commuting property as follows (compare with~\eqref{eq:pT:commuting}): For all $v\in W^{1,1}(T)$,
\begin{equation}\label{eq:GT:commuting}
  (\GT\circ\IT) v = \vlproj[T]{k}(\GRAD v).
\end{equation}
At the global level, we define the operator $\Gh:\Uh\to\Poly{k}(\Th)^d$ such that, for all $\uv[h]\in\Uh$,
\begin{equation}\label{eq:Gh}
  \restrto{(\Gh\uv[h])}{T}\eqbydef\GT\uv[T]\qquad\forall T\in\Th.
\end{equation}

The commuting property~\eqref{eq:GT:commuting} is used in conjunction with the properties of the $L^2$-projector to prove the following lemma, which states the compactness of sequences of HHO functions uniformly bounded in a discrete Sobolev norm.
\begin{svgraybox}
  \begin{lemma}[Discrete compactness]\label{lem:compactness}
    Let a polynomial degree $k\ge 0$ and an index $p\in (1,+\infty)$ be fixed.
    Let $(\Mh)_{h\in{\cal H}}$ denote a regular sequence of meshes in the sense of Definition~\ref{def:mesh.reg}.
    Let $(\uv[h])_{h\in{\cal H}}\in (\UhD)_{h\in{\cal H}}$ be a sequence for which there exists a real number $C>0$ independent of $h$ such that
    $$
    \norm[1,p,h]{\uv[h]}\le C\qquad\forall h\in{\cal H}.
    $$
    Then, there exists $v\in W^{1,p}_0(\Omega)$ such that, up to a subsequence, as $h\to 0$,
    \begin{enumerate}[(i)]
    \item $v_h\to v$ and $\ph\uv[h]\to v$ strongly in $L^q(\Omega)$ for all $1\le q<\frac{dp}{d-p}$ if $1\le p< d$ and $1\le q<+\infty$ if $p\ge d$;
    \item $\Gh\uv[h]\to\GRAD v$ weakly in $L^p(\Omega)^d$.
    \end{enumerate}
  \end{lemma}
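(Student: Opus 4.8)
The plan is to establish the two convergences via a combination of a discrete Rellich--Kondrachov argument (for the strong $L^q$ convergence) and a weak-compactness plus identification-of-the-limit argument (for the gradient). First I would extract subsequences: since $(v_h)_h$ is bounded in $L^q(\Omega)$ for the relevant exponents by the discrete Sobolev embeddings of Theorem~\ref{thm:sobolev}, and since $(\Gh\uv[h])_h$ is bounded in $L^p(\Omega)^d$ (this needs a short argument: using~\eqref{eq:GT} with $\vec\tau=\vlproj[T]{k}(\text{something})$, or more directly comparing $\GT\uv[T]$ against $\GRAD v_T$ plus boundary penalty terms via an integration-by-parts/trace bound, one shows $\norm[L^p(\Omega)^d]{\Gh\uv[h]}\lesssim\norm[1,p,h]{\uv[h]}$), I can pass to a subsequence along which $v_h\to v$ weakly in $L^q$, $\Gh\uv[h]\to\vec G$ weakly in $L^p(\Omega)^d$, and (the crux) $v_h\to v$ \emph{strongly} in $L^q$ for $q$ below the critical exponent.

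The strong convergence is the main obstacle and the technical heart of the proof. The standard route is a Kolmogorov--Riesz / Fréchet--Kolmogorov compactness criterion: one must bound translates $\norm[L^p]{v_h(\cdot+\xi)-v_h}$ by $|\xi|$ times the discrete norm $\norm[1,p,h]{\uv[h]}$, uniformly in $h$, plus a boundary correction term that also vanishes. For broken polynomial functions this estimate is exactly of the type proved in~\cite{Eymard.Gallouet.ea:10} and~\cite[Theorem~6.1]{Di-Pietro.Ern:10}, which underpin Theorem~\ref{thm:sobolev}; the face-penalty term $\sum_F h_F^{1-p}\norm[L^p(F)]{v_F-v_T}^p$ is precisely what controls the ``jumps'' across faces that obstruct translation estimates for discontinuous functions. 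So I would invoke that machinery (or reprove the translation estimate by testing against the piecewise-constant structure and summing face contributions) to get relative compactness of $(v_h)_h$ in $L^q(\Omega)$, hence a strongly convergent subsequence with limit $v$. That $v\in W^{1,p}_0(\Omega)$, rather than merely $L^q$, will come out of the gradient identification below together with the zero boundary values enforced in $\UhD$ (the boundary-face penalty forces the trace of the limit to vanish, as in the discrete Poincaré/Sobolev proofs).

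Next I would identify the weak limit $\vec G$ of $\Gh\uv[h]$ as $\GRAD v$. Fix $\vec\varphi\in C_c^\infty(\Omega)^d$. Using the definition~\eqref{eq:GT} of $\GT$ with $\vec\tau=\vlproj[T]{k}\vec\varphi$ on each element and summing, the boundary terms telescope (single-valued interface unknowns) with only a consistency defect between $\vec\varphi$ and its projection; passing to the limit, the element term $\sum_T(v_T,\DIV\vlproj[T]{k}\vec\varphi)_T\to\int_\Omega v\,\DIV\vec\varphi$ by the strong $L^q$ convergence of $v_h$ and the approximation properties~\eqref{eq:approx} of $\vlproj[T]{k}$, while $\sum_T(\GT\uv[T],\vlproj[T]{k}\vec\varphi)_T\to\int_\Omega\vec G\cdot\vec\varphi$ by weak convergence (again adjusting $\vlproj[T]{k}\vec\varphi$ to $\vec\varphi$ using~\eqref{eq:approx}). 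Hence $\int_\Omega\vec G\cdot\vec\varphi=-\int_\Omega v\,\DIV\vec\varphi$ for all test $\vec\varphi$, i.e.\ $\vec G=\GRAD v$ in the distributional sense and $v\in W^{1,p}(\Omega)$; combined with the vanishing boundary trace this gives $v\in W^{1,p}_0(\Omega)$, proving~(ii). Finally, for~(i) it remains to upgrade to $\ph\uv[h]\to v$: since $\GRAD\pT\uv[T]$ is the $L^2$-projection of $\GT\uv[T]$ (see~\eqref{eq:GT.pT}) one bounds $\norm[L^p(\Omega)^d]{\GRADh\ph\uv[h]}\lesssim\norm[1,p,h]{\uv[h]}$, and one controls $\norm[L^p(T)]{\pT\uv[T]-v_T}$ by $h_T$ times local norms (via~\eqref{eq:pT:2}, a Poincaré--Wirtinger inequality, and the boundedness of $\GRAD\pT$), so that $\norm[L^p(\Omega)]{\ph\uv[h]-v_h}\to0$; the strong $L^q$ convergence of $\ph\uv[h]$ then follows from that of $v_h$.
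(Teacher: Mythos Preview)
The paper does not actually prove this lemma; it merely states the result and remarks that the commuting property~\eqref{eq:GT:commuting} together with the approximation properties of the $L^2$-projector are the key ingredients, implicitly deferring to~\cite{Di-Pietro.Droniou:16} for the details. Your plan is the standard and correct route for such discrete compactness results: Kolmogorov--Riesz for the strong $L^q$ convergence of $v_h$, weak $L^p$ compactness for $\Gh\uv[h]$, and a distributional identification of the weak limit via~\eqref{eq:GT}. This is essentially how the argument runs in the cited reference.

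Two technical points deserve attention. First, in the identification step, when you test~\eqref{eq:GT} with $\vec\tau=\vlproj[T]{k}\vec\varphi$ and sum over $T$, the interface contributions do \emph{not} telescope exactly, since $\vlproj[T_1]{k}\vec\varphi$ and $\vlproj[T_2]{k}\vec\varphi$ generally differ on a shared face. You correctly flag this as a ``consistency defect'', but bounding the residual $\sum_{T}\sum_{F\in\Fh[T]}(v_F,(\vlproj[T]{k}\vec\varphi-\vec\varphi)\SCAL\normal_{TF})_F$ requires a uniform estimate on face quantities such as $\sum_{T}\sum_{F\in\Fh[T]} h_F\norm[L^p(F)]{v_F}^p$, which follows from the bound on $\norm[1,p,h]{\uv[h]}$ combined with a discrete trace inequality for $v_T$; this step should be made explicit. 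Second, for the $L^q$ convergence of $\ph\uv[h]$ when $q>p$, showing $\norm[L^p(\Omega)]{\ph\uv[h]-v_h}\to 0$ only yields convergence in $L^p$; to reach the full range of $q$ you should either interpolate against a uniform $L^r$ bound on $\ph\uv[h]$ for some $r>q$ (available from a discrete Sobolev embedding applied to the reconstruction), or run the Rellich argument directly on $\ph\uv[h]$ and identify its limit with that of $v_h$ via the $L^p$ estimate.
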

\end{svgraybox}

\subsection{Discrete problem and well-posedness}\label{sec:plap:discrete}

The discrete counterpart of the function $a$ defined by~\eqref{eq:plap:a} is the function $\mathrm{a}_h:\Uh\times\Uh\to\Real$ such that, for all $\uu[h],\uv[h]\in\Uh$,
\begin{equation}\label{eq:plap:ah}
  \mathrm{a}_h(\uu[h],\uv[h])
  \eqbydef
  \int_\Omega\vec{\sigma}(\Gh\uu[h])\SCAL\Gh\uv[h]
  + \sum_{T\in\Th}\mathrm{s}_T(\uu,\uv).
\end{equation}
Here, for all $T\in\Th$, $\mathrm{s}_T:\UT\times\UT\to\Real$ is a local stabilization function which can be obtained, e.g., by generalizing~\eqref{eq:sT.hho} to the non-Hilbertian setting:
\begin{multline}\label{eq:plap:sT.hho}
  \mathrm{s}_T(\uu,\uv)\eqbydef
  \\
  \sum_{F\in\Fh[T]}h_F^{1-p}\int_F|\delta_{TF}^k\uu-\delta_T^k\uu|^{p-2}
  (\delta_{TF}^k\uu-\delta_T^k\uu)(\delta_{TF}^k\uv-\delta_T^k\uv).
\end{multline}
The discrete problem reads:
Find $\uu[h]\in\UhD$ such that
\begin{equation}\label{eq:plap:discrete}
  \mathrm{a}_h(\uu[h],\uv[h]) = \int_\Omega fv_h\qquad\forall\uv[h]\in\UhD.
\end{equation}
The following result summarizes~\cite[Theorem~4.5, Remark~4.7, and Proposition~6.1]{Di-Pietro.Droniou:16}.
\begin{lemma}[Well-posedness]\label{lem:plap:well-posedness}
  Problem~\eqref{eq:plap:discrete} admits a unique solution, and there exists a real number $C>0$ independent of $h$, but possibly depending on $\Omega$, $d$, $\varrho$, and $k$, such that, denoting by $p'\eqbydef\frac{p}{p-1}$ the dual exponent of $p$, it holds that
  \begin{equation}\label{eq:plap:a-priori}
    \norm[1,p,h]{\uu[h]}\le C\norm[L^{p'}(\Omega)]{f}^{\frac{1}{p-1}}.
  \end{equation}
\end{lemma}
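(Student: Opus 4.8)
The plan is to invoke the theory of monotone operators, since the discrete problem \eqref{eq:plap:discrete} is the Euler--Lagrange equation of a strictly convex coercive functional on the finite-dimensional space $\UhD$. First I would recast \eqref{eq:plap:discrete} in operator form: define $A_h:\UhD\to(\UhD)'$ by $\langle A_h\uu[h],\uv[h]\rangle\eqbydef\mathrm{a}_h(\uu[h],\uv[h])$, so that the problem becomes $A_h\uu[h]=L_h$ where $L_h(\uv[h])\eqbydef\int_\Omega f v_h$. The key structural facts to establish are that $A_h$ is \emph{strictly monotone} and \emph{coercive} with respect to the norm $\norm[1,p,h]{{\cdot}}$, which is indeed a norm on $\UhD$ by the discrete Sobolev (Poincar\'e) inequality of Theorem~\ref{thm:sobolev}. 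Both properties follow from the pointwise monotonicity inequalities for $\vec{\sigma}(\vec{\tau})=|\vec{\tau}|^{p-2}\vec{\tau}$, namely $(\vec{\sigma}(\vec{\tau}_1)-\vec{\sigma}(\vec{\tau}_2))\SCAL(\vec{\tau}_1-\vec{\tau}_2)>0$ for $\vec{\tau}_1\neq\vec{\tau}_2$ and $\vec{\sigma}(\vec{\tau})\SCAL\vec{\tau}=|\vec{\tau}|^p$, combined with the analogous scalar inequalities for the stabilization terms in \eqref{eq:plap:sT.hho}; the stabilization contribution is nonnegative and vanishes only when all the face residuals $\delta_{TF}^k\uv-\delta_T^k\uv$ vanish, while the Galerkin contribution $\int_\Omega|\Gh\uv[h]|^p$ controls $\sum_T\norm[L^p(T)^d]{\GRAD v_T}^p$ via the $L^2$-projection relation \eqref{eq:GT.pT} together with an inverse-type argument, so that together they bound $\norm[1,p,h]{\uv[h]}^p$ from below.

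Next I would carry out the following steps. (1) Existence: $A_h$ is continuous on the finite-dimensional space $\UhD$ (the integrands are continuous in the coefficients), strictly monotone, and coercive in the sense that $\langle A_h\uv[h],\uv[h]\rangle/\norm[1,p,h]{\uv[h]}\to+\infty$ as $\norm[1,p,h]{\uv[h]}\to+\infty$ (because $\langle A_h\uv[h],\uv[h]\rangle\gtrsim\norm[1,p,h]{\uv[h]}^p$ with $p>1$); hence by the Browder--Minty surjectivity theorem for monotone coercive operators on reflexive Banach spaces—here trivially applicable since the space is finite-dimensional—there exists $\uu[h]$ with $A_h\uu[h]=L_h$. (2) Uniqueness: if $\uu[h]^{(1)},\uu[h]^{(2)}$ are both solutions, subtracting and testing with $\uu[h]^{(1)}-\uu[h]^{(2)}$ gives $\langle A_h\uu[h]^{(1)}-A_h\uu[h]^{(2)},\uu[h]^{(1)}-\uu[h]^{(2)}\rangle=0$, and strict monotonicity forces $\uu[h]^{(1)}=\uu[h]^{(2)}$. (3) A priori bound: testing \eqref{eq:plap:discrete} with $\uv[h]=\uu[h]$ yields $\langle A_h\uu[h],\uu[h]\rangle=\int_\Omega f u_h$; the left-hand side is $\gtrsim\norm[1,p,h]{\uu[h]}^p$ by coercivity, while the right-hand side is bounded by H\"older's inequality followed by the discrete Sobolev embedding \eqref{eq:sobolev} with $q=p$ (legitimate in any dimension) by $\norm[L^{p'}(\Omega)]{f}\,\norm[L^p(\Omega)]{u_h}\le C\norm[L^{p'}(\Omega)]{f}\,\norm[1,p,h]{\uu[h]}$. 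Dividing through by $\norm[1,p,h]{\uu[h]}$ and raising to the power $\frac{1}{p-1}$ gives \eqref{eq:plap:a-priori}.

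The main obstacle is making the coercivity estimate $\langle A_h\uv[h],\uv[h]\rangle\gtrsim\norm[1,p,h]{\uv[h]}^p$ genuinely quantitative and $h$-uniform, i.e., with a multiplicative constant depending only on $\Omega$, $d$, $\varrho$, $k$. One has to bound $\norm[L^p(T)^d]{\GRAD v_T}^p$ by the reconstructed quantity $\norm[L^p(T)^d]{\GT\uv[T]}^p$ plus the face residual terms; this is the genuine discrete $W^{1,p}$-stability of the HHO triple and is exactly the content hidden in \cite[Theorem~4.5]{Di-Pietro.Droniou:16}—it requires the discrete trace inequality \eqref{eq:trace.disc} and mesh-regularity bounds \eqref{eq:Np}--\eqref{eq:hT_hF}, together with an equivalence between $\norm[L^p(F)]{v_F-v_T}$ and $\norm[L^p(F)]{\delta_{TF}^k\uv-\delta_T^k\uv}$ up to the $\GRAD\pT$ contribution. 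Since the statement explicitly says it summarizes \cite[Theorem~4.5, Remark~4.7, and Proposition~6.1]{Di-Pietro.Droniou:16}, in this exposition I would quote that norm-equivalence as the input and spend the proof on assembling monotonicity, Browder--Minty, and the H\"older/Sobolev bound as above, rather than reproving the stabilization-norm equivalence from scratch.
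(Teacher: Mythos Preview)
Your proposal is correct and matches the paper's approach: the paper does not give a self-contained proof but explicitly refers to \cite[Theorem~4.5, Remark~4.7, and Proposition~6.1]{Di-Pietro.Droniou:16} for the monotone-operator well-posedness and the $h$-uniform coercivity/norm-equivalence, and then sketches exactly your step~(3) (test with $\uv[h]=\uu[h]$, H\"older, discrete Sobolev embedding with $q=p$) in Remark~\ref{rem:role.sobolev}. Your identification of the $W^{1,p}$-stability of the HHO triple as the one nontrivial ingredient to be imported from the reference is spot on.
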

\begin{remark}[Role of the discrete Sobolev embeddings]\label{rem:role.sobolev}
  The discrete Sobolev embedding~\eqref{eq:sobolev} with $q=p$ is used in the proof of the a priori bound~\eqref{eq:plap:a-priori} to estimate the right-hand side of the discrete problem~\eqref{eq:plap:discrete} after selecting $\uv[h]=\uu[h]$ and using H\"{o}lder's inequality:
  $$
  \int_\Omega fu_h
  \le\norm[L^{p'}(\Omega)]{f}\norm[L^p(\Omega)]{u_h}
  \le\norm[L^{p'}(\Omega)]{f}\norm[1,p,h]{\uu[h]}.
  $$
\end{remark}
\subsection{Convergence and error analysis}\label{sec:plap:convergence}

The following theorem states the convergence of the sequence of solutions to problem~\eqref{eq:plap:discrete} on a regular mesh sequence.
Notice that convergence is proved for exact solutions that display only the minimal regularity $u\in W^{1,p}_0(\Omega)$ required by the weak formulation~\eqref{eq:plap:weak}.
This is an important point when dealing with nonlinear problems, for which further regularity can be hard to prove, and possibly requires assumptions on the data too strong to be matched in practical situations.

\begin{svgraybox}
  \begin{theorem}[Convergence]\label{thm:plap:convergence}
    Let a polynomial degree $k\ge 0$ and an index $p\in (1,+\infty)$ be fixed.
    Let $(\Mh)_{h\in{\cal H}}$ denote a regular sequence of meshes in the sense of Definition~\ref{def:mesh.reg}.
    Let $u\in W^{1,p}_0(\Omega)$ denote the unique solution to~\eqref{eq:plap:weak}, and denote by $(\uu[h])_{h\in{\cal H}}\in (\UhD)_{h\in{\cal H}}$ the sequence of solutions to~\eqref{eq:plap:discrete} on $(\Th)_{h\in{\cal H}}$.
    Then, as $h\to 0$, it holds
    \begin{enumerate}[(i)]
    \item $u_h\to u$ and $\ph\uu[h]\to u$ strongly in $L^q(\Omega)$ for all $1\le q<\frac{dp}{d-p}$ if $1\le p< d$ and $1\le q<+\infty$ if $p\ge d$;
    \item $\Gh\uu[h]\to\GRAD u$ strongly in $L^p(\Omega)^d$.
    \end{enumerate}
  \end{theorem}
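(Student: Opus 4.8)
The plan is to argue by compactness, as is standard for convergence to minimal-regularity solutions of monotone problems, and then to identify the limit through a Minty monotonicity argument. First I would use the a priori bound~\eqref{eq:plap:a-priori} to deduce that $(\uu[h])_{h\in{\cal H}}$ is bounded in the $\norm[1,p,h]{{\cdot}}$-seminorm uniformly in $h$, so that Lemma~\ref{lem:compactness} provides $v\in W^{1,p}_0(\Omega)$ such that, along a subsequence (not relabelled), $u_h\to v$ and $\ph\uu[h]\to v$ strongly in $L^q(\Omega)$ for $q$ in the stated range and $\Gh\uu[h]\rightharpoonup\GRAD v$ weakly in $L^p(\Omega)^d$. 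Since $|\vec{\sigma}(\vec{\tau})|=|\vec{\tau}|^{p-1}$, the sequence $\vec{\sigma}(\Gh\uu[h])$ is bounded in $L^{p'}(\Omega)^d$, so, passing to a further subsequence, $\vec{\sigma}(\Gh\uu[h])\rightharpoonup\vec{\chi}$ weakly in $L^{p'}(\Omega)^d$ for some $\vec{\chi}$. The goal is then to prove $\vec{\chi}=\vec{\sigma}(\GRAD v)$ and $v=u$, and finally to promote the weak convergence of $\Gh\uu[h]$ to strong convergence.

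To show that $v$ satisfies a linear identity, I would test the discrete problem~\eqref{eq:plap:discrete} with $\uv[h]=\Ih\varphi$ for arbitrary $\varphi\in C_c^\infty(\Omega)$. Its right-hand side equals $\int_\Omega f\,\lproj{k}\varphi$ and tends to $\int_\Omega f\varphi$. For the stabilization, a H\"older inequality applied to~\eqref{eq:plap:sT.hho} and then to the sum over $T$ gives $\big|\sum_{T\in\Th}\mathrm{s}_T(\uu,\IT\varphi)\big|\le\big(\sum_{T\in\Th}\mathrm{s}_T(\uu,\uu)\big)^{1/p'}\big(\sum_{T\in\Th}\mathrm{s}_T(\IT\varphi,\IT\varphi)\big)^{1/p}$; the first factor is uniformly bounded (test~\eqref{eq:plap:discrete} with $\uu[h]$, use $\int_\Omega\vec{\sigma}(\Gh\uu[h])\SCAL\Gh\uu[h]\ge0$ and~\eqref{eq:plap:a-priori}), while the second tends to $0$ by a consistency estimate for $\mathrm{s}_T$ on smooth functions analogous to Proposition~\ref{prop:approx.sT}. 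For the leading term, the commuting property~\eqref{eq:GT:commuting} gives $\Gh\Ih\varphi=\vlproj{k}(\GRAD\varphi)\to\GRAD\varphi$ strongly in $L^p(\Omega)^d$, so by weak--strong convergence $\int_\Omega\vec{\sigma}(\Gh\uu[h])\SCAL\Gh\Ih\varphi\to\int_\Omega\vec{\chi}\SCAL\GRAD\varphi$. Passing to the limit in~\eqref{eq:plap:discrete} then yields $\int_\Omega\vec{\chi}\SCAL\GRAD\varphi=\int_\Omega f\varphi$ for all $\varphi\in C_c^\infty(\Omega)$, hence for all $\varphi\in W^{1,p}_0(\Omega)$ by density; in particular $\int_\Omega\vec{\chi}\SCAL\GRAD v=\int_\Omega fv$.

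The step I expect to be the main obstacle is the identification $\vec{\chi}=\vec{\sigma}(\GRAD v)$, which cannot be done by passing to the limit directly because $\Gh\uu[h]$ converges only weakly; I would handle it with Minty's trick. Testing~\eqref{eq:plap:discrete} with $\uu[h]$ and discarding the nonnegative stabilization gives $\int_\Omega\vec{\sigma}(\Gh\uu[h])\SCAL\Gh\uu[h]\le\int_\Omega fu_h$, whose $\limsup$ as $h\to0$ is at most $\int_\Omega fv=\int_\Omega\vec{\chi}\SCAL\GRAD v$ by the previous step. For any fixed $\vec{\Lambda}\in L^p(\Omega)^d$, monotonicity of $\vec{\sigma}$ gives $0\le\int_\Omega(\vec{\sigma}(\Gh\uu[h])-\vec{\sigma}(\vec{\Lambda}))\SCAL(\Gh\uu[h]-\vec{\Lambda})$; expanding and taking $\limsup$, using weak--strong convergence on the two mixed terms and the bound above on the first, yields $0\le\int_\Omega(\vec{\chi}-\vec{\sigma}(\vec{\Lambda}))\SCAL(\GRAD v-\vec{\Lambda})$. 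Taking $\vec{\Lambda}=\GRAD v-t\vec{\Phi}$ with $t>0$ and $\vec{\Phi}\in L^p(\Omega)^d$ arbitrary, dividing by $t$ and letting $t\to0^+$ (continuity of $\vec{\sigma}$ and dominated convergence) gives $\int_\Omega(\vec{\chi}-\vec{\sigma}(\GRAD v))\SCAL\vec{\Phi}\ge0$ for all $\vec{\Phi}$, hence $\vec{\chi}=\vec{\sigma}(\GRAD v)$. Thus $v$ solves~\eqref{eq:plap:weak}, so $v=u$ by uniqueness; since the limit is independent of the extracted subsequence, the whole sequence converges, which gives~(i).

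To obtain the strong convergence $\Gh\uu[h]\to\GRAD u$ in $L^p(\Omega)^d$ asserted in~(ii), I would use uniform convexity. Testing~\eqref{eq:plap:discrete} with $\uu[h]$ and using $\vec{\sigma}(\vec{\tau})\SCAL\vec{\tau}=|\vec{\tau}|^p$ gives $\norm[L^p(\Omega)^d]{\Gh\uu[h]}^p+\sum_{T\in\Th}\mathrm{s}_T(\uu,\uu)=\int_\Omega fu_h\to\int_\Omega fu=\norm[L^p(\Omega)^d]{\GRAD u}^p$, so $\limsup_{h\to0}\norm[L^p(\Omega)^d]{\Gh\uu[h]}\le\norm[L^p(\Omega)^d]{\GRAD u}$, both summands on the left being nonnegative. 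Combined with the weak convergence $\Gh\uu[h]\rightharpoonup\GRAD u$ and the weak lower semicontinuity of the $L^p$-norm, this forces $\norm[L^p(\Omega)^d]{\Gh\uu[h]}\to\norm[L^p(\Omega)^d]{\GRAD u}$; since $L^p(\Omega)^d$ is uniformly convex for $1<p<\infty$, weak convergence together with convergence of the norms yields strong convergence (and, incidentally, $\sum_{T\in\Th}\mathrm{s}_T(\uu,\uu)\to0$).
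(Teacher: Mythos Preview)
Your proposal is correct and follows precisely the approach the paper outlines: the paper does not spell out a detailed proof of Theorem~\ref{thm:plap:convergence} but, in Remark~\ref{rem:role.compactness}, indicates that the argument proceeds by (i) the a priori bound~\eqref{eq:plap:a-priori}, (ii) the compactness Lemma~\ref{lem:compactness}, and (iii) identification of the limit via the Minty/Leray--Lions technique, which is exactly what you do; your use of uniform convexity to upgrade weak to strong convergence of $\Gh\uu[h]$ is the standard way to conclude~(ii).
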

\end{svgraybox}
\begin{remark}[Convergence by compactness]\label{rem:role.compactness}
  Convergence proofs by compactness such as that of Theorem~\ref{thm:plap:convergence} proceeed in three steps:
  \begin{inparaenum}[(i)]
  \item an energy estimate on the discrete solution is established;
  \item compactness of the sequence of discrete solutions is inferred from the energy estimate;
  \item the limit is identified as being a solution to the continuous problem.
  \end{inparaenum}
  In our context, the first point corresponds to the a priori bound~\eqref{eq:plap:a-priori}, while the second point relies on the compactness result of Lemma~\ref{lem:compactness}.
  The third step is carried out adapting the techniques of~\cite{Minty:63,Leray.Lions:65}.
\end{remark}
When dealing with high-order methods, it is also important to determine the convergence rates attained when the solution is regular enough (or when adaptive mesh refinement is used, cf. Section~\ref{sec:poisson:num:3d.adap}).
This makes the object of the following result, proved in~\cite[Theorem~7 and Corollary~10]{Di-Pietro.Droniou:17}.
\begin{svgraybox}
  \begin{theorem}[Energy error estimate]\label{thm:plap:en.err.est}
    Under the assumptions and notations of Theorem~\ref{thm:plap:convergence}, and further assuming the regularity $u\in W^{k+2,p}(\Omega)$ and $\vec{\sigma}(\GRAD u)\in W^{k+1,p'}(\Omega)^d$ with $p'\eqbydef\frac{p}{p-1}$, there exists a real number $C>0$ independent of $h$ such that the following holds: If $p\ge 2$,
    \begin{subequations}\label{eq:plap:en.err.est}
      \begin{multline}\label{eq:plap:en.err.est:p>=2}
        \norm[L^p(\Omega)^d]{\GRADh(\ph\uu[h]-u)} + \seminorm[\mathrm{s},h]{\uu[h]}
        \le\\
        C\left[
        h^{k+1}\seminorm[W^{k+2,p}(\Omega)]{u} + h^{\frac{k+1}{p-1}}\left(
        \seminorm[W^{k+2,p}(\Omega)]{u}^{\frac{1}{p-1}}
        + \seminorm[W^{k+1,p'}(\Omega)^d]{\vec{\sigma}(\GRAD u)}^{\frac{1}{p-1}}
        \right)
        \right],
      \end{multline}
      while, if $p<2$,
      \begin{multline}\label{eq:plap:en.err.est:p<2}
        \norm[L^p(\Omega)^d]{\GRADh(\ph\uu[h]-u)} + \seminorm[\mathrm{s},h]{\uu[h]}
        \le \\
        C\left(
        h^{(k+1)(p-1)}\seminorm[W^{k+2,p}(\Omega)]{u}^{p-1}
        + h^{k+1}\seminorm[W^{k+1,p'}(\Omega)^d]{\vec{\sigma}(\GRAD u)}
        \right),
      \end{multline}
    \end{subequations}
    where, recalling the definition~\eqref{eq:plap:sT.hho} of the local stabilization function, we have introduced the seminorm on $\Uh$ such that, for all $\uv[h]\in\Uh$, $\seminorm[\mathrm{s},h]{\uv[h]}^p\eqbydef\sum_{T\in\Th}\mathrm{s}_T(\uv,\uv).$
  \end{theorem}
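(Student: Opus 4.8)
The plan is to follow the monotone‑operator (``third Strang lemma'') paradigm: reduce~\eqref{eq:plap:en.err.est} to a discrete error equation, bound its right‑hand side by a consistency error, and invert the monotonicity of the discrete operator. Throughout I would write $\uhu[h]\eqbydef\Ih u\in\UhD$, denote by $\uu$, $\uv$, $\uhu[T]$ the restrictions to a generic $T\in\Th$, and abridge $A\lesssim B$ for $A\le CB$ with $C$ as in the statement (here $C$ may also depend on $u$ through $\seminorm[W^{1,p}(\Omega)]{u}$ and $\seminorm[W^{k+2,p}(\Omega)]{u}$). I would use the classical pointwise bounds for $\vec{\sigma}(\vec{\tau})=|\vec{\tau}|^{p-2}\vec{\tau}$: if $p\ge 2$, $(\vec{\sigma}(\vec{x})-\vec{\sigma}(\vec{y}))\SCAL(\vec{x}-\vec{y})\gtrsim|\vec{x}-\vec{y}|^p$ and $|\vec{\sigma}(\vec{x})-\vec{\sigma}(\vec{y})|\lesssim(|\vec{x}|+|\vec{y}|)^{p-2}|\vec{x}-\vec{y}|$; if $p<2$, $(\vec{\sigma}(\vec{x})-\vec{\sigma}(\vec{y}))\SCAL(\vec{x}-\vec{y})\gtrsim(|\vec{x}|+|\vec{y}|)^{p-2}|\vec{x}-\vec{y}|^2$ and $|\vec{\sigma}(\vec{x})-\vec{\sigma}(\vec{y})|\lesssim|\vec{x}-\vec{y}|^{p-1}$. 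Since the stabilization $\mathrm{s}_T$ of~\eqref{eq:plap:sT.hho} acts as a $p$-Laplacian in the residual difference $(\delta_{TF}^k-\delta_T^k){\cdot}$, it enjoys the analogous monotonicity and H\"older‑continuity with respect to $\seminorm[\mathrm{s},T]{{\cdot}}\eqbydef\mathrm{s}_T({\cdot},{\cdot})^{\nicefrac1p}$, and the $L^p$‑version of Proposition~\ref{prop:approx.sT} gives $\seminorm[\mathrm{s},h]{\uhu[h]}\lesssim h^{k+1}\seminorm[W^{k+2,p}(\Omega)]{u}$. Finally, because $\restrto{(\GRADh\ph\uv[h])}{T}=\GRAD\pT\uv$ is the $L^2$‑projection of $\GT\uv$ onto $\GRAD\Poly{k+1}(T)\subset\Poly{k}(T)^d$ (and $L^2$‑projectors are stable and optimally approximating in $L^p$ on regular mesh sequences), while $\restrto{(\ph\uhu[h])}{T}=\eproj[T]{k+1}u$ has optimal $L^p$‑approximation properties, it will suffice to bound the discrete error $X\eqbydef\norm[L^p(\Omega)^d]{\Gh(\uu[h]-\uhu[h])}+\seminorm[\mathrm{s},h]{\uu[h]-\uhu[h]}$ and then add $h^{k+1}\seminorm[W^{k+2,p}(\Omega)]{u}$.

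Next I would derive the discrete error equation. Integrating the strong form~\eqref{eq:plap:strong} by parts element by element, using that $\vec{\sigma}(\GRAD u)\in W^{k+1,p'}(\Omega)^d$ has single‑valued normal traces and that the face unknowns of $\uv[h]\in\UhD$ are single‑valued and vanish on $\partial\Omega$, then inserting the definition~\eqref{eq:GT} of $\GT$ tested against $\vlproj[T]{k}\vec{\sigma}(\GRAD u)\in\Poly{k}(T)^d$, integrating by parts back, and invoking the commuting property~\eqref{eq:GT:commuting} (so that $\Gh\uhu[h]=\vlproj{k}(\GRAD u)$), one obtains, for all $\uv[h]\in\UhD$, the identity $\int_\Omega fv_h=\int_\Omega\vec{\sigma}(\Gh\uhu[h])\SCAL\Gh\uv[h]+{\cal E}_h(\uv[h])$ with
\begin{multline*}
{\cal E}_h(\uv[h])\eqbydef\sum_{T\in\Th}\Bigl[\bigl(\GT\uv,\vlproj[T]{k}\vec{\sigma}(\GRAD u)-\vec{\sigma}(\vlproj[T]{k}\GRAD u)\bigr)_T\\
+\sum_{F\in\Fh[T]}\bigl(v_F-v_T,(\vec{\sigma}(\GRAD u)-\vlproj[T]{k}\vec{\sigma}(\GRAD u))\SCAL\normal_{TF}\bigr)_F\Bigr].
\end{multline*}
Subtracting this from the discrete problem~\eqref{eq:plap:discrete} and introducing $\sum_{T\in\Th}\mathrm{s}_T(\uhu[T],\uv)$ then yields, for all $\uv[h]\in\UhD$,
\begin{multline*}
\int_\Omega\bigl(\vec{\sigma}(\Gh\uu[h])-\vec{\sigma}(\Gh\uhu[h])\bigr)\SCAL\Gh\uv[h]+\sum_{T\in\Th}\bigl(\mathrm{s}_T(\uu,\uv)-\mathrm{s}_T(\uhu[T],\uv)\bigr)\\
={\cal E}_h(\uv[h])-\sum_{T\in\Th}\mathrm{s}_T(\uhu[T],\uv).
\end{multline*}

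The third step is to bound the right‑hand side of this identity. By Cauchy--Schwarz/H\"older on the two terms of ${\cal E}_h$, the optimal approximation of $\vlproj{k}$ (in $L^{p'}$ for $\vec{\sigma}(\GRAD u)$, in $L^p$ for $\GRAD u$) and the scaled trace inequality~\eqref{eq:approx.trace}, together with the H\"older‑continuity of $\vec{\sigma}$ to control the commutator $\vlproj[T]{k}\vec{\sigma}(\GRAD u)-\vec{\sigma}(\vlproj[T]{k}\GRAD u)$, a H\"older inequality for $\sum_{T}\mathrm{s}_T(\uhu[T],\uv)$, and the coercivity estimate $\norm[1,p,h]{\uv[h]}^p\lesssim\norm[L^p(\Omega)^d]{\Gh\uv[h]}^p+\seminorm[\mathrm{s},h]{\uv[h]}^p$ underlying Lemma~\ref{lem:plap:well-posedness}, one reaches $|{\cal E}_h(\uv[h])-\sum_{T}\mathrm{s}_T(\uhu[T],\uv)|\lesssim\Theta\bigl(\norm[L^p(\Omega)^d]{\Gh\uv[h]}+\seminorm[\mathrm{s},h]{\uv[h]}\bigr)$, where (using $(p-1)p'=p$) $\Theta\eqbydef h^{k+1}\seminorm[W^{k+1,p'}(\Omega)^d]{\vec{\sigma}(\GRAD u)}+h^{(k+1)(p-1)}\seminorm[W^{k+2,p}(\Omega)]{u}^{p-1}$ if $p<2$, and the same quantity augmented by $h^{k+1}\seminorm[W^{k+2,p}(\Omega)]{u}$ if $p\ge 2$; the ``genuinely nonlinear'' contribution $h^{(k+1)(p-1)}\seminorm[W^{k+2,p}(\Omega)]{u}^{p-1}$ comes precisely from the commutator and from $\sum_T\mathrm{s}_T(\uhu[T],{\cdot})$.

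Finally I would take $\uv[h]=\uu[h]-\uhu[h]$ in the error equation. By monotonicity of $\vec{\sigma}$ and of $\mathrm{s}_T$ (and linearity of $\delta_{TF}^k-\delta_T^k$ and of $\Gh$), its left‑hand side is $\gtrsim X^p$ if $p\ge 2$, and $\gtrsim X^2M^{p-2}$ if $p<2$, where $M\gtrsim\norm[1,p,h]{\uu[h]}+\norm[1,p,h]{\uhu[h]}$ is bounded uniformly in $h$ by Lemma~\ref{lem:plap:well-posedness} and the $L^p$‑stability of the projectors; its right‑hand side is $\lesssim\Theta X$. Hence $X\lesssim\Theta^{\nicefrac1{p-1}}$ if $p\ge 2$ and $X\lesssim\Theta M^{2-p}\lesssim\Theta$ if $p<2$. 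Expanding $\Theta^{\nicefrac1{p-1}}$ via $(a+b)^{\nicefrac1{p-1}}\lesssim a^{\nicefrac1{p-1}}+b^{\nicefrac1{p-1}}$ and $\bigl(h^{(k+1)(p-1)}\seminorm[W^{k+2,p}(\Omega)]{u}^{p-1}\bigr)^{\nicefrac1{p-1}}=h^{k+1}\seminorm[W^{k+2,p}(\Omega)]{u}$, then adding back $h^{k+1}\seminorm[W^{k+2,p}(\Omega)]{u}$ (which for $p<2$ and $h\le 1$ is absorbed into $C$ via the identity $h^{k+1}\seminorm[W^{k+2,p}(\Omega)]{u}=h^{(k+1)(p-1)}\seminorm[W^{k+2,p}(\Omega)]{u}^{p-1}\bigl(h^{k+1}\seminorm[W^{k+2,p}(\Omega)]{u}\bigr)^{2-p}$), one recovers~\eqref{eq:plap:en.err.est:p>=2} and~\eqref{eq:plap:en.err.est:p<2} respectively. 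I expect the main obstacle to be the consistency estimate with its mixed Lebesgue exponents, and in particular the control of the commutator $\vlproj[T]{k}\vec{\sigma}(\GRAD u)-\vec{\sigma}(\vlproj[T]{k}\GRAD u)$: this is where the H\"older‑continuity of $\vec{\sigma}$ enters and where the exponents $(k+1)(p-1)$ (for $p<2$) and $\nicefrac1{p-1}$ (for $p\ge 2$) are born. A secondary difficulty is the degeneracy of the monotonicity for $p<2$, which forces bringing in the a priori bound $M$ and is responsible for the asymmetry between~\eqref{eq:plap:en.err.est:p>=2} and~\eqref{eq:plap:en.err.est:p<2}.
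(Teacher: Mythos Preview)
The paper does not actually prove this theorem: it simply states the result and refers the reader to \cite[Theorem~7 and Corollary~10]{Di-Pietro.Droniou:17} for the proof. There is therefore no ``paper's own proof'' to compare against here.

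That said, your proposal is precisely the strategy carried out in the cited reference: the error equation obtained by testing the strong form against $\Gh\uv[h]$ via~\eqref{eq:GT} and the commuting property~\eqref{eq:GT:commuting}, the consistency bound driven by the commutator $\vlproj[T]{k}\vec{\sigma}(\GRAD u)-\vec{\sigma}(\vlproj[T]{k}\GRAD u)$ and the boundary residual of $\vec{\sigma}(\GRAD u)-\vlproj[T]{k}\vec{\sigma}(\GRAD u)$, and the inversion of the strong monotonicity of both the consistency and stabilization contributions (with the uniform a~priori bound from Lemma~\ref{lem:plap:well-posedness} entering when $p<2$). Your identification of the two technical bottlenecks---the mixed-exponent H\"older estimate on the commutator and the degenerate monotonicity for $p<2$---matches exactly where the work lies in \cite{Di-Pietro.Droniou:17}. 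The sketch is correct and essentially coincides with the intended argument.
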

\end{svgraybox}
\begin{remark}[Order of convergence]
  The asymptotic scaling for the approximation error in the left-hand side of~\eqref{eq:plap:en.err.est} is determined by the leading terms in the right-hand side.
  Using the Bachmann--Landau notation,
  \begin{equation}\label{eq:plap:en.err.est:asymptotic}
    \norm[L^p(\Omega)^d]{\GRADh(\ph\uu[h]-u)} + \seminorm[\mathrm{s},h]{\uu[h]}
    =\begin{cases}
    \mathcal{O}(h^{\frac{k+1}{p-1}}) &\text{if $p\ge 2$},
    \\
    \mathcal{O}(h^{(k+1)(p-1)}) & \text{if $p<2$}.
    \end{cases}
  \end{equation}
  For a discussion of these orders of convergence and a comparison with other methods studied in the literature, we refer the reader to~\cite[Remark~3.3]{Di-Pietro.Droniou:17}.
\end{remark}

\subsection{Numerical example}\label{sec:plap:num}

To illustrate the performance of the HHO method, we solve the $p$-Laplace problem corresponding to the exact solution
$$
u(\vec{x})=\exp(x_1+\pi x_2)
$$
for $p\in\{\nicefrac74,4\}$. This test is taken from ~\cite[Section~4.4]{Di-Pietro.Droniou:16} and~\cite[Section~3.5]{Di-Pietro.Droniou:17}.
The domain is again the unit square $\Omega=(0,1)^2$, and the volumetric source term $f$ is inferred from~\eqref{eq:plap:strong}.
The convergence results for the same triangular and polygonal mesh families of Section~\ref{sec:poisson:num:2d} (see Fig.~\ref{fig:meshes:triangular} and~\ref{fig:meshes:polygonal}) are displayed in Fig.~\ref{fig:plap:num}.
Here, the error is measured by the quantity $\norm[1,p,h]{\Ih u-\uu[h]}$, for which analogous estimates as those in Theorem~\ref{thm:plap:en.err.est} hold.
The error estimate seem sharp for $p=\nicefrac74$, and the asymptotic orders of convergence match the one predicted by the theory.
For $p=4$, better orders of convergence than the asymptotic ones in~\eqref{eq:plap:en.err.est:asymptotic} are observed.
One possible explanation is that the lowest-order terms in the right-hand side of~\eqref{eq:plap:en.err.est} are not yet dominant for the specific problem data and mesh.
Another possibility is that compensations occur among terms that are separately estimated in the proof.

\begin{figure}
  \centering
    \ref{conv.legend.plap}
  \vspace{0.5cm} \\
  \begin{minipage}{0.45\textwidth}
    \begin{tikzpicture}[scale=0.65]
      \begin{loglogaxis}[
          legend columns=-1,
          legend to name=conv.legend.plap,
          legend style={/tikz/every even column/.append style={column sep=0.35cm}}
        ]
        \addplot table[x=meshsize,y=err_p]{plap2_0_mesh1_pl2.dat};
        \addplot table[x=meshsize,y=err_p]{plap2_1_mesh1_pl2.dat};
        \addplot table[x=meshsize,y=err_p]{plap2_2_mesh1_pl2.dat};
        \addplot table[x=meshsize,y=err_p]{plap2_3_mesh1_pl2.dat};
        \logLogSlopeTriangle{0.90}{0.4}{0.1}{3/4}{black};
        \logLogSlopeTriangle{0.90}{0.4}{0.1}{3/2}{black};
        \logLogSlopeTriangle{0.90}{0.4}{0.1}{9/4}{black};
        \logLogSlopeTriangle{0.90}{0.4}{0.1}{3}{black};
        \legend{$k=0$,$k=1$,$k=2$,$k=3$}
      \end{loglogaxis}
    \end{tikzpicture}
    \subcaption{Triangular, $p=\nicefrac74$}
  \end{minipage}
  \begin{minipage}{0.45\textwidth}
    \begin{tikzpicture}[scale=0.65]
      \begin{loglogaxis}
        \addplot table[x=meshsize,y=err_p]{plap2_0_pi6_tiltedhexagonal_pl2.dat};
        \addplot table[x=meshsize,y=err_p]{plap2_1_pi6_tiltedhexagonal_pl2.dat};
        \addplot table[x=meshsize,y=err_p]{plap2_2_pi6_tiltedhexagonal_pl2.dat};
        \addplot table[x=meshsize,y=err_p]{plap2_3_pi6_tiltedhexagonal_pl2.dat};
        \logLogSlopeTriangle{0.90}{0.4}{0.1}{3/4}{black};
        \logLogSlopeTriangle{0.90}{0.4}{0.1}{3/2}{black};
        \logLogSlopeTriangle{0.90}{0.4}{0.1}{9/4}{black};
        \logLogSlopeTriangle{0.90}{0.4}{0.1}{3}{black};
      \end{loglogaxis}
    \end{tikzpicture}
    \subcaption{Hexagonal, $p=\nicefrac74$}
  \end{minipage} 
  \vspace{0.5cm}\\
  \begin{minipage}{0.45\textwidth}  
    \begin{tikzpicture}[scale=0.65]
      \begin{loglogaxis}
        \addplot table[x=meshsize,y=err_p4]{plap_0_mesh1_pg2.dat};
        \addplot table[x=meshsize,y=err_p4]{plap_1_mesh1_pg2.dat};
        \addplot table[x=meshsize,y=err_p4]{plap_2_mesh1_pg2.dat};
        \addplot table[x=meshsize,y=err_p4]{plap_3_mesh1_pg2.dat};
        \logLogSlopeTriangle{0.85}{0.4}{0.1}{1/3}{black};
        \logLogSlopeTriangle{0.85}{0.4}{0.1}{2/3}{black};
        \logLogSlopeTriangle{0.85}{0.4}{0.1}{1}{black};
        \logLogSlopeTriangle{0.85}{0.4}{0.1}{4/3}{black};        
      \end{loglogaxis}
    \end{tikzpicture}
    \subcaption{Triangular, $p=4$}    
  \end{minipage}  
  \begin{minipage}{0.45\textwidth}  
    \begin{tikzpicture}[scale=0.65]
      \begin{loglogaxis}
        \addplot table[x=meshsize,y=err_p4]{plap_0_pi6_tiltedhexagonal_pg2.dat};
        \addplot table[x=meshsize,y=err_p4]{plap_1_pi6_tiltedhexagonal_pg2.dat};
        \addplot table[x=meshsize,y=err_p4]{plap_2_pi6_tiltedhexagonal_pg2.dat};
        \addplot table[x=meshsize,y=err_p4]{plap_3_pi6_tiltedhexagonal_pg2.dat};
        \logLogSlopeTriangle{0.85}{0.4}{0.1}{1/3}{black};
        \logLogSlopeTriangle{0.85}{0.4}{0.1}{2/3}{black};
        \logLogSlopeTriangle{0.85}{0.4}{0.1}{1}{black};
        \logLogSlopeTriangle{0.85}{0.4}{0.1}{4/3}{black};        
      \end{loglogaxis}
    \end{tikzpicture}
    \subcaption{Hexagonal, $p=4$}    
  \end{minipage}  
  \caption{$\norm[1,p,h]{\Ih u-\uu[h]}$ vs. $h$ for the test case of Section~\ref{sec:plap:num}.\label{fig:plap:num}}
\end{figure}


\section{Diffusion-advection-reaction}\label{sec:adr}

In this section we extend the HHO method to the scalar diffusion-advection-reaction problem: Find $u:\Omega\to\Real$ such that
\begin{subequations}
  \begin{alignat*}{2}
    \DIV(-\diff\GRAD u + \vel u) + \reac u &= f &\qquad& \text{in $\Omega$,} 
    \\   
    u &= 0 &\qquad& \text{on $\partial\Omega$,}
  \end{alignat*}
\end{subequations}
where
\begin{inparaenum}[(i)]
\item $\diff:\Omega\to\Real_+^*$ is the diffusion coefficient, which we assume piecewise constant on a fixed partition of the domain $P_\Omega=\{\omega\}$ and uniformly elliptic;
\item $\vel\in{\rm Lip}(\Omega)^d$ (hence, in particular, $\vel \in W^{1,\infty}(\Omega)^d$) is the advective velocity field, for which we additionally assume, for the sake of simplicity, $\DIV\vel\equiv0$;
\item $\reac\in L^\infty(\Omega)$ is the reaction coefficient such that $\reac\ge\reac_0>0$ a.e. in $\Omega$ for some real number $\reac_0$;
\item $f\in L^2(\Omega)$ is the volumetric source term.
\end{inparaenum}

Having assumed $\diff$ uniformly elliptic, the following weak formulation classically holds:
Find $u\in H_0^1(\Omega)$ such that
\begin{equation}\label{eq:advdiffreac:weak}
  a_{\diff,\vel,\reac}(u,v) = (f,v)\qquad\forall v\in H_0^1(\Omega),
\end{equation}
where the bilinear form $a_{\diff,\vel,\reac}:H^1(\Omega)\times H^1(\Omega)\to\Real$ is such that
$$
a_{\diff,\vel,\reac}(u,v)\eqbydef
a_{\diff}(u,v) + a_{\vel,\reac}(u,v),
$$
and the diffusive and advective-reactive contributions are respectively defined by
$$
  a_{\diff}(u,v)\eqbydef(\diff\GRAD u, \GRAD v),\qquad
  a_{\vel,\reac}(u,v)\eqbydef \tfrac12 (\vel \SCAL \GRAD u, v) - \tfrac12 (u, \vel \SCAL \GRAD v) 
  + (\reac u,v).
$$

The first novel ingredient introduced in this section is the robust HHO discretization of first-order terms.
Problem~\eqref{eq:advdiffreac:weak} is characterized by the presence of spatially varying coefficients, which can give rise to different regimes in different regions of the domain.
In practice, one is typically interested in numerical methods that handle in a robust way locally dominant advection, corresponding to large values of a local P\'{e}clet number.
As pointed out in~\cite{Di-Pietro.Ern.ea:08}, this requires that the discrete counterpart of the bilinear form $a_{\vel,\reac}$ satisfies a stability condition that guarantees well-posedness even in the absence of diffusion.
This is realized here combining a reconstruction of the advective derivative obtained in the HHO spirit with an upwind stabilization that penalizes the differences between face- and element-based DOFs.

The second novelty introduced in this section is a formulation of diffusive terms with weakly enforced boundary conditions.
A relevant feature of problem~\eqref{eq:advdiffreac:weak} is that boundary layers can appear in the vicinity of the outflow portion of $\partial\Omega$ when the diffusion coefficient takes small values.
To improve the numerical approximation in this situation, one can resort to weakly enforced boundary conditions, which do not constrain the numerical solution to a fixed boundary value.

The following material is closely inspired by~\cite{Di-Pietro.Droniou.ea:15}, where locally vanishing diffusion is treated (see Remark~\ref{rem:locally.degenerate}), and more general formulations for the advective stabilization term are considered. 

\subsection{Discretization of diffusive terms with weakly enforced boundary conditions}\label{sec:adr:weak.bc}

To avoid dealing with jumps of the diffusion coefficient inside the elements when writing the HHO discretization of problem~\eqref{eq:advdiffreac:weak} on a mesh $\Mh=(\Th,\Fh)$, we make the following
\begin{assumption}[Compatible mesh]
  The mesh $\Mh=(\Th,\Fh)$ is compatible with the diffusion coefficient, i.e., for all $T\in\Th$, there exists a unique subdomain $\omega\in P_\Omega$ such that $T\subset\omega$.
  For all $T\in\Th$ we set, for the sake of brevity, $\diff[T]\eqbydef\restrto{\diff}{T}$.
\end{assumption}

Letting $\zeta>0$ denote a user-dependent boundary penalty parameter, we define the discrete diffusive bilinear form $\mathrm{a}_{\diff,h}:\Uh\times\Uh\to\Real$ such that
\begin{equation}\label{eq:avuh}
  \begin{aligned}
    &\mathrm{a}_{\diff,h}(\uu[h],\uv[h])
    \eqbydef
    \sum_{T\in\Th}\diff[T]\mathrm{a}_T(\uu,\uv)
    \\
    &\quad + \hspace{-1ex}\sum_{F \in \Fhb}
    \left\{-(\diff[T_F] \GRAD p_{T_F}^{k+1} \uu ,v_F)_F
    + (u_F, \diff[T_F] \GRAD p_{T_F}^{k+1} \uv)_F
    +  \frac{\zeta\diff[T_F]}{h_F} (u_F,v_F)_F
    \right\}, 
  \end{aligned}
\end{equation}
where, for all mesh elements $T\in\Th$, $\mathrm{a}_T$ is the local diffusive bilinear form defined by~\eqref{eq:aT} and, for all boundary faces $F\in\Fhb$, $T_F$ denotes the unique mesh element such that $F\subset\partial T_F$.
The terms in the second line of~\eqref{eq:avuh} are responsible for the weak enforcement of boundary conditions \`{a} la Nitsche.

Define the diffusion-weighted norm on $\Uh$ such that, for all $\uv[h]\in\Uh$, letting $\norm[\mathrm{a},T]{\uv}^2\eqbydef\mathrm{a}_T(\uv,\uv)$,
$$
\norm[\diff,h]{\uv[h]}^2\eqbydef
\sum_{T\in\Th}\diff[T]\norm[\mathrm{a},T]{\uv[T]}^2 + \sum_{F\in\Fhb}\frac{\diff[T_F]}{h_F}\norm[F]{v_F}^2.
$$
It is a simple matter to check that, for all $\zeta\ge 1$, we have the following coercivity property for $\mathrm{a}_{\diff,h}$:
For all $\uv[h]\in\Uh$,
\begin{equation}\label{eq:coer:adiffh}
  \norm[\diff,h]{\uv[h]}^2\le\mathrm{a}_{\diff,h}(\uv[h],\uv[h]).
\end{equation}

\subsection{Discretization of advective terms with upwind stabilization}\label{sec:adr:advection}

We introduce the ingredients for the discretization of first-order terms: a local advective derivative reconstruction and an upwind stabilization term penalizing the differences between face- and element-based DOFs.

\subsubsection{Local contribution}
Let a mesh element $T\in\Th$ be fixed.
By the principles illustrated in Section~\ref{sec:basics:local:ibp}, we define the local discrete advective derivative reconstruction $\Gvel{T}:\UT\to\Poly{k}(T)$ such that, for all $\uv[T]\in\UT$,
$$
(\Gvel{T}\uv[T],w)_T
= -(v_T,\vel\SCAL\GRAD w)_T + \sum_{F\in\Fh[T]}((\vel\SCAL \normal_{TF})v_F,w)_F
\quad\forall w \in\Poly{k}(T).
$$
The local advective-reactive bilinear form $\mathrm{a}_{\vel,\reac,T}:\UT\times\UT\to\Real$ is defined as follows:
\begin{equation}\label{eq:abetaT}
  \mathrm{a}_{\vel,\reac,T}(\uu,\uv)
  \eqbydef
  \frac12 (\Gvel{T}\uu,v_T)_T - \frac12 (u_T,\Gvel{T}\uv)_T + \mathrm{s}_{\vel,T}(\uu,\uv) + (\reac u_T,v_T)_T,
\end{equation}
where the bilinear form
\begin{equation}\label{eq:svelT}
  \mathrm{s}_{\vel,T}(\uu,\uv)\eqbydef
  \frac12\sum_{F \in \Fh[T]}(| \vel\SCAL\normal_{TF} | (u_F-u_T),v_F-v_T)_F,
\end{equation}
can be interpreted as an upwind stabilization term.
\begin{remark}[Element-face upwind stabilization]
  Upwinding is realized here by penalizing the difference between face- and element-based DOFs.
  This is a relevant difference with respect to classical (cell-based) finite volume and discontinuous Galerkin methods, where jumps of element-based DOFs are considered instead.
  With the choice~\eqref{eq:svelT} for the stabilization term, the stencil remains the same as for a pure diffusion problem, and static condensation of element-based DOFs in the spirit of Section~\ref{sec:basics:local:implementation} remains possible.
  In the context of the lowest-order Hybrid Mixed Mimetic methods, face-element upwind terms have been considered in~\cite{Beirao-da-Veiga.Droniou.ea:11}.
\end{remark}
To express the stability properties of $\mathrm{a}_{\vel,\reac,T}$, we define the local seminorm such that, for all $\uv\in\UT$,
$$
\norm[\vel,\reac,T]{\uv}^2 \eqbydef 
\frac12\sum_{F \in \Fh[T]} \norm[F]{| \vel\SCAL\normal_{TF} |^{\nicefrac12} (v_F-v_T)}^2 
+ \tauref^{-1}\norm[T]{v_T}^2,
$$
where, letting ${\rm L}_{\beta,T} \eqbydef \max_{1\le i\le d} \norm[L^\infty(T)^d]{\GRAD\beta_i}$, we have introduced the reference time
$$
\tauref \eqbydef \{ \max ( \norm[L^\infty(T)]{\reac},{\rm L}_{\beta,T})  \}^{-1}.
$$
Notice that the map $\norm[\vel,\reac,T]{{\cdot}}$ is actually a norm on $\UT$ provided that $\restrto{\vel}{F}\SCAL\normal_{TF}$ is nonzero a.e. on each $F\in\Fh[T]$.
For all $\uv\in\UT$, letting $\uu=\uv$ in~\eqref{eq:abetaT}, it can be easily checked that the following coercivity property holds:
\begin{equation}\label{eq:coer:abetaT}
  \min(1,\tauref\reac_0)\norm[\vel,\reac,T]{\uv}^2\le\mathrm{a}_{\vel,\reac,T}(\uv,\uv).
\end{equation}

\subsubsection{Global advective-reactive bilinear form} 
The global advective-reactive bilinear form is given by 
\begin{equation}\label{eq:abetah}
  \mathrm{a}_{\vel,\reac,h}(\uu[h],\uv[h])
  \eqbydef
  \sum_{T\in\Th}\mathrm{a}_{\vel,\reac,T}(\uu,\uv) + \frac12\sum_{F \in \Fhb}(|\vel\SCAL\normal|u_F,v_F)_F,
\end{equation}
where the first term results from the assembly of elementary contributions, while the second term is responsible for the enforcement of the boundary condition on the inflow portion of $\partial\Omega$. 
\begin{remark}[Link with the advective-reactive bilinear form of~\cite{Di-Pietro.Droniou.ea:15}]
  The bilinear form $\mathrm{a}_{\vel,\reac,h}$ defined by~\eqref{eq:abetah} admits the following equivalent reformulation, which corresponds to~\cite[Eq.~(16)]{Di-Pietro.Droniou.ea:15} when the upwind stabilization discussed in Section~4.2 therein is used:
  \begin{multline}\label{eq:abetah.tilde}
    \mathrm{a}_{\vel,\reac,h}(\uu[h],\uv[h])
    = \sum_{T\in\Th}\bigg( -(u_T,\Gvel{T}\uv)_T + \sum_{F \in \Fh[T]}((\vel\SCAL\normal_{TF})^- (u_F-u_T),v_F-v_T)_F\bigg)
    \\
    + \sum_{T\in\Th} (\reac u_T,v_T)_T + \sum_{F \in \Fhb}((\vel\SCAL\normal)^+u_F,v_F)_F,
  \end{multline}
  where, for any real number $\alpha$, we have set $\alpha^\pm\eqbydef\frac12\left(|\alpha|\pm\alpha\right)$.
  As a matter of fact, recalling the discrete integration by parts formula~\cite[Eq. (35)]{Di-Pietro.Droniou.ea:15},
  \begin{align*}
    \sum_{T\in\Th} (u_T,\Gvel{T}\uv)_T = &
    - \sum_{T\in\Th} (\Gvel{T}\uu,v_T)_T
    - \sum_{T\in\Th}\sum_{F\in\Fh[T]} ( (\vel\SCAL\normal_{TF})(u_F-u_T),v_F-v_T )_F \\
    &+\sum_{F\in\Fhb} ( (\vel\SCAL\normal_{TF})u_F,v_F)_F , 
  \end{align*}
  we can reformulate the first term in the right-hand side of~\eqref{eq:abetah.tilde} as follows:
  \begin{align*}
    \sum_{T\in\Th} -(u_T,\Gvel{T}\uv)_T & =
    \sum_{T\in\Th} \bigg(- \frac12(u_T,\Gvel{T}\uv)_T - \frac12(u_T,\Gvel{T}\uv)_T \bigg)\\
    & =  \sum_{T\in\Th}\bigg(- \frac12(u_T,\Gvel{T}\uv)_T + \frac12  (\Gvel{T}\uu,v_T)_T \bigg) \\
    &\qquad + \frac12 \sum_{T\in\Th}\sum_{F\in\Fh[T]} ( (\vel\SCAL\normal_{TF})(u_F-u_T),v_F-v_T )_F
    \\
    &\qquad - \frac12 \sum_{F\in\Fhb} ( (\vel\SCAL\normal_{TF})u_F,v_F)_F. 
  \end{align*}	
  Inserting this equality into~\eqref{eq:abetah.tilde} and rearranging the terms we recover~\eqref{eq:abetah}.
  The formulation~\eqref{eq:abetah} highlights two key properties of the bilinear form $\mathrm{a}_{\vel,\reac,h}$: its positivity and the skew-symmetric nature of the consistent term.
  The reformulation~\eqref{eq:abetah.tilde}, on the other hand, has a more familiar look for the reader accustomed to upwind stabilization terms.
\end{remark}
Define the global advective-reactive norm such that, for all $\uv[h]\in\Uh$,
$$
\norm[\vel,\reac,h]{\uv[h]}^2 \eqbydef \sum_{T\in\Th} \norm[\vel,\reac,T]{\uv}^2 
+ \frac12 \sum_{F\in\Fhb}  \norm[F]{| \vel\SCAL\normal |^{\nicefrac12} v_F}^2.
$$
The following coercivity result for $\mathrm{a}_{\vel,\reac,h}$ follows from~\eqref{eq:coer:abetaT}:
For all $\uv[h]\in\Uh$ 
\begin{equation}\label{eq:coer:abetah}
  \min_{T\in\Th}(1, \tauref\reac_0) \norm[\vel,\reac,h]{\uv[h]}^2 \leq \mathrm{a}_{\vel,\reac,h}(\uv[h],\uv[h]).
\end{equation}

\subsection{Global problem and inf-sup stability}\label{sec:adr:global.problem}

We can now define the global bilinear form $\mathrm{a}_{\diff,\vel,\reac,h}: \Uh\times\Uh\to\Real$ combining the diffusive and advective-reactive contributions defined above:
$$
\mathrm{a}_{\diff,\vel,\reac,h}(\uu[h],\uv[h]) \eqbydef \mathrm{a}_{\diff,h}(\uu[h],\uv[h]) + \mathrm{a}_{\vel,\reac,h}(\uu[h],\uv[h]). 
$$
The HHO approximation of~\eqref{eq:advdiffreac:weak} then reads:
Find $\uu[h]\in\Uh$ such that, for all $\uv[h]\in\Uh$, 
\begin{equation}\label{eq:advdiffreac:discrete}
  \mathrm{a}_{\diff,\vel,\reac,h}(\uu[h],\uv[h]) = (f,v_h).
\end{equation}

Let us examine stability.
In view of~\eqref{eq:coer:adiffh} and~\eqref{eq:coer:abetah}, the bilinear form $\mathrm{a}_{\diff,\vel,\reac,h}$ is clearly coercive with respect to the norm
$$
\norm[\flat,h]{\uv[h]}^2\eqbydef\norm[\diff,h]{\uv[h]}^2 + \norm[\vel,\reac,h]{\uv[h]}^2,
$$
which guarantees that problem~\eqref{eq:advdiffreac:discrete} has a unique solution.
This norm, however, does not convey any information on the discrete advective derivative.
A stronger stability result is stated in the following lemma, where we consider the augmented norm
$$
\norm[\sharp,h]{\uv[h]}^2\eqbydef\norm[\flat,h]{\uv[h]}^2
+ \sum_{T\in\Th,\velref\not\equiv 0}h_T\velref^{-1}\norm[T]{\Gvel\uv}^2,
$$
with $\velref\eqbydef\norm[L^\infty(T)^d]{\vel}$ denoting the reference velocity on $T$.
\begin{lemma}[Inf-sup stability of $\mathrm{a}_{\diff,\vel,\reac,h}$]\label{lem:advdiffreac:stab}
  Assume that $\zeta\ge 1$ and that, for all $T\in\Th$,
  \begin{equation}\label{eq:stability.cond}
    h_T \max({\rm L}_{\beta,T},\reac_0) \le \velref. 
  \end{equation}
  Then, there exists a real number $C>0$, independent of $h,\diff,\vel$ and $\reac$, but possibly depending on $d$, $\varrho$, and $k$ such that, for all $\uw[h]\in\Uh$,
  $$	
  C\min_{T\in\Th}(1, \tauref\reac_0) \norm[\sharp,h]{ \uw[h] } \le 
  \sup_{\uv[h]\in\Uh\backslash{\{\underline{0}_h\}}} \frac{\mathrm{a}_{\diff,\vel,\reac,h}(\uw[h],\uv[h])}{\norm[\sharp,h]{\uv[h]}}. 
  $$
\end{lemma}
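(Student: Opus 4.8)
The plan is to establish the inf-sup condition by the standard device of, for a given $\uw[h]\in\Uh$, exhibiting a suitable test function $\uv[h]$ for which the ratio $\mathrm{a}_{\diff,\vel,\reac,h}(\uw[h],\uv[h])/\norm[\sharp,h]{\uv[h]}$ controls $\norm[\sharp,h]{\uw[h]}$ from below. Since coercivity (\eqref{eq:coer:adiffh} combined with \eqref{eq:coer:abetah}) already gives control of $\norm[\flat,h]{\uw[h]}^2 = \norm[\diff,h]{\uw[h]}^2 + \norm[\vel,\reac,h]{\uw[h]}^2$ by $\mathrm{a}_{\diff,\vel,\reac,h}(\uw[h],\uw[h])$, the only missing piece in the $\norm[\sharp,h]{{\cdot}}$-norm is the advective-derivative term $\sum_{T:\velref\not\equiv 0} h_T\velref^{-1}\norm[T]{\Gvel\uw[T]}^2$. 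So I would look for a test function of the form $\uv[h] = \uw[h] + \lambda\, \underline{y}_h$, where $\underline{y}_h$ is built to "see" $\Gvel\uw[T]$ and $\lambda>0$ is a small parameter to be fixed at the end.

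The natural choice for $\underline{y}_h$ is the one with zero face unknowns and element unknowns $y_T = h_T\velref^{-1}\,\lproj[T]{k}(\Gvel\uw[T])$ (equivalently $h_T\velref^{-1}\Gvel\uw[T]$, which already lies in $\Poly{k}(T)$), on elements with $\velref\not\equiv 0$, and $y_T=0$ otherwise. First I would compute $\mathrm{a}_{\diff,\vel,\reac,h}(\uw[h],\underline{y}_h)$ term by term. The advective consistency part $-\tfrac12(w_T,\Gvel{T}\underline{y}_T)_T + \tfrac12(\Gvel{T}\uw[T],y_T)_T$: the first term, after an integration-by-parts-type manipulation (or directly from the definition of $\Gvel{T}$ with $w=$ the element unknown, noting that $\underline{y}_T$ has no face unknowns), and the second term which is $\tfrac12 h_T\velref^{-1}\norm[T]{\Gvel\uw[T]}^2$, together should produce the desired positive quantity $\sum_T h_T\velref^{-1}\norm[T]{\Gvel\uw[T]}^2$ up to a factor and up to remainder terms. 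The remaining contributions — the diffusive bilinear form $\sum_T\diff[T]\mathrm{a}_T(\uw[T],\underline{y}_T)$ and its boundary Nitsche terms, the upwind stabilization $\mathrm{s}_{\vel,T}(\uw[T],\underline{y}_T)$, the reaction term $(\reac w_T, y_T)_T$, and the boundary inflow term — are all "error" terms which I would bound using Cauchy–Schwarz, the discrete trace inequality \eqref{eq:trace.disc} and inverse inequalities (to pass from $\Gvel\uw[T]$ in various norms back to $\velref^{-1/2}h_T^{1/2}\norm[T]{\Gvel\uw[T]}$-type quantities), the scaling $h_F\simeq h_T$ from \eqref{eq:hT_hF}, and crucially the hypothesis \eqref{eq:stability.cond}, i.e. $h_T\max({\rm L}_{\beta,T},\reac_0)\le\velref$, which is exactly what makes the reaction and $\mathrm{L}_{\beta,T}$-weighted terms absorbable. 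These remainders should be controlled, after a Young inequality with a small parameter $\epsilon$, by $\epsilon\sum_T h_T\velref^{-1}\norm[T]{\Gvel\uw[T]}^2 + C_\epsilon\norm[\flat,h]{\uw[h]}^2$.

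Next I would estimate $\norm[\sharp,h]{\underline{y}_h}$: because $\underline{y}_h$ has zero face unknowns, $\norm[\flat,h]{\underline{y}_h}^2$ reduces to $\sum_T\diff[T]\norm[\mathrm{a},T]{\underline{y}_T}^2$ plus the $\tauref^{-1}\norm[T]{y_T}^2$ and face jump (here $v_F-v_T = -y_T$) contributions, and the advective-derivative term $\sum_T h_T\velref^{-1}\norm[T]{\Gvel\underline{y}_T}^2$; using the definition $y_T = h_T\velref^{-1}\Gvel\uw[T]$, inverse inequalities ($\norm[T]{\GRAD y_T}\lesssim h_T^{-1}\norm[T]{y_T}$, $\norm[F]{y_T}\lesssim h_T^{-1/2}\norm[T]{y_T}$, $\norm[T]{\Gvel\underline{y}_T}\lesssim h_T^{-1}\norm[T]{y_T}$), the stability bound \eqref{eq:stab.T} for $\mathrm{a}_T$, and again \eqref{eq:stability.cond} together with $\tauref^{-1}=\max(\norm[L^\infty(T)]{\reac},{\rm L}_{\beta,T})$, one shows $\norm[\sharp,h]{\underline{y}_h}^2 \lesssim \sum_T h_T\velref^{-1}\norm[T]{\Gvel\uw[T]}^2 \le \norm[\sharp,h]{\uw[h]}^2$, i.e. $\norm[\sharp,h]{\underline{y}_h}\lesssim\norm[\sharp,h]{\uw[h]}$ (the constant may depend on $\diff$ only through $\diff[T]h_T\velref^{-1}$ which is harmless — actually one must be a bit careful that the bound is $\diff$-independent; the diffusive part of $\norm[\sharp,h]{\underline{y}_h}$ scales like $\diff[T]h_T\velref^{-2}\norm[T]{\Gvel\uw[T]}^2$ and does \emph{not} obviously fold into the advective term, so I may instead bound it by $\diff[T]\norm[\mathrm{a},T]{\uw[T]}^2$-type quantities or simply keep it and check it is dominated — this is the kind of bookkeeping point to watch). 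Then I combine: $\mathrm{a}_{\diff,\vel,\reac,h}(\uw[h],\uw[h]+\lambda\underline{y}_h) \ge c_0\min_T(1,\tauref\reac_0)\norm[\flat,h]{\uw[h]}^2 + \lambda\big(c_1\sum_T h_T\velref^{-1}\norm[T]{\Gvel\uw[T]}^2 - \epsilon\,(\text{same}) - C_\epsilon\norm[\flat,h]{\uw[h]}^2\big)$; choosing $\epsilon = c_1/2$ and then $\lambda$ small enough that $\lambda C_\epsilon \le \tfrac12 c_0\min_T(1,\tauref\reac_0)$ yields a lower bound $\gtrsim \min_T(1,\tauref\reac_0)\norm[\sharp,h]{\uw[h]}^2$. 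Dividing by $\norm[\sharp,h]{\uw[h]+\lambda\underline{y}_h}\lesssim(1+\lambda)\norm[\sharp,h]{\uw[h]}\lesssim\norm[\sharp,h]{\uw[h]}$ gives the claim.

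The main obstacle I expect is the careful tracking of constants so that the final bound is genuinely independent of $\diff$, $\vel$, and $\reac$ (only through the stated combinations $\tauref\reac_0$ and via hypothesis \eqref{eq:stability.cond}): several of the remainder terms naturally carry factors of $\diff[T]$, $\norm[L^\infty(T)^d]{\vel}$, ${\rm L}_{\beta,T}$, or $\norm[L^\infty(T)]{\reac}$, and one must repeatedly invoke \eqref{eq:stability.cond} and the definition of $\tauref$ to convert these into dimensionless quantities of order one before applying Young's inequality — in particular the coupling between the Nitsche boundary terms of $\mathrm{a}_{\diff,h}$ tested against $\underline{y}_h$ (which has $y_F=0$ but $y_T\ne0$) and the boundary inflow term needs attention, as does ensuring the diffusive contribution to $\norm[\sharp,h]{\underline{y}_h}$ is controlled without introducing a spurious $\diff$-dependence. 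None of the individual estimates is deep; the work is in the orchestration, and in choosing the order $\epsilon$-then-$\lambda$ correctly so the absorption closes.
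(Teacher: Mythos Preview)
The paper itself does not include a proof of this lemma; it is stated and then immediately followed by a remark on the meaning of condition~\eqref{eq:stability.cond}, with the analysis deferred to the reference~\cite{Di-Pietro.Droniou.ea:15} from which the material is drawn. So there is no ``paper's own proof'' to compare against in the strict sense.

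That said, your strategy is exactly the standard one used in that reference: coercivity~\eqref{eq:coer:adiffh}--\eqref{eq:coer:abetah} supplies control of $\norm[\flat,h]{\uw[h]}$, and one recovers the missing advective-derivative contribution by testing against $\uv[h]=\uw[h]+\lambda\underline{y}_h$ with $\underline{y}_T=(h_T\velref^{-1}\Gvel{T}\uw,0)$ on elements where $\velref\neq 0$. The absorption argument with $\epsilon$ then $\lambda$ is also correct, and letting $\lambda$ scale with $\min_T(1,\tauref\reac_0)$ is precisely what produces that factor in the final estimate.

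The one point you flag but do not resolve --- ensuring the diffusive contribution $\sum_T\diff[T]\norm[\mathrm{a},T]{\underline{y}_T}^2$ in $\norm[\sharp,h]{\underline{y}_h}$ and the cross term $\mathrm{a}_{\diff,h}(\uw[h],\underline{y}_h)$ are bounded independently of $\diff$ --- is handled by the elementary estimate
\[
\norm[T]{\Gvel{T}\uw}\lesssim\velref\norm[1,T]{\uw},
\]
which follows from the definition of $\Gvel{T}$ after an integration by parts (using $\DIV\vel=0$) and the discrete trace inequality~\eqref{eq:trace.disc}. With this in hand, $\diff[T]\norm[\mathrm{a},T]{\underline{y}_T}^2\lesssim\diff[T]\velref^{-2}\norm[T]{\Gvel{T}\uw}^2\lesssim\diff[T]\norm[1,T]{\uw}^2\lesssim\diff[T]\norm[\mathrm{a},T]{\uw}^2$, which is already part of $\norm[\diff,h]{\uw[h]}^2$; the cross term is treated the same way. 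This is the missing ingredient that turns your sketch into a complete argument.
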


\begin{remark}[Condition~\eqref{eq:stability.cond}]
  Condition~\eqref{eq:stability.cond} means
  \begin{inparaenum}[(i)]
  \item that the advective field is well-resolved by the mesh and
  \item that reaction is not dominant.
  \end{inparaenum}
\end{remark}

\subsection{Convergence}\label{sec:adr:convergence}

For each mesh element $T\in\Th$, we introduce the local P{\'e}clet number such that
$$
\Pe_T \eqbydef \max_{F\in\Fh[T]} \frac{h_F\norm[L^\infty(F)]{\restrto{\vel}{F}\SCAL\normal_{TF}}}{\diff[F]},
$$
where $\diff[F] \eqbydef \min_{T\in\Th[F]} \diff[T]$. 
For the mesh elements where diffusion dominates we have $\Pe_T\le h_T$, for those where advection dominates we have $\Pe_T\ge 1$, while intermediate regimes correspond to $\Pe_T\in(h_T,1)$.

The following error estimate accounts for the variation of the convergence rate according to the value of the local P\'{e}clet number, showing that diffusion-dominated elements contribute with a term in $\mathcal{O}(h_T^{k+1})$ (as for a pure diffusion problem), whereas convection-dominated elements contribute with a term in $\mathcal{O}(h_T^{k+\nicefrac12})$ (as for a pure advection problem).
\begin{svgraybox}
  \begin{theorem}[Energy error estimate]\label{thm:advdiffreac:en.err.est}
    Let $u$ solve \eqref{eq:advdiffreac:weak} and $\uu[h]$ solve \eqref{eq:advdiffreac:discrete}. 
    Under the assumptions of Lemma~\ref{lem:advdiffreac:stab}, 
    and further assuming the regularity $\restrto{u}{T}\in H^{k+2}(T)$ for all $T\in\Th$, 
    there exists a real number $C>0$ independent of $h,\diff,\vel$, and $\reac$, but possibly depending on $\rho,d$, and $k$, such that 
    $$
    \begin{aligned}
      C\min_{T\in\Th}(1, \tauref\reac_0) \norm[\sharp,h]{ \uhu - \uu[h] }
      \le\Bigg\{ \sum_{T\in\Th} \bigg[  
    	\left( \diff[T]\norm[H^{k+2}(T)]{u}^2 + \tauref^{-1}\norm[H^{k+1}(T)]{u}^2\right) &h_{T}^{2(k+1)}
        \\	    		
        +\velref \min(1,\Pe_T) \norm[H^{k+1}(T)]{u}^2 &h_{T}^{2k+1}
    	\bigg]
      \Bigg\}^{\nicefrac12}. 
    \end{aligned}
    $$
  \end{theorem}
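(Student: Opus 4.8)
The plan is to carry out a Strang-type argument based on the inf-sup stability of Lemma~\ref{lem:advdiffreac:stab}. First I would apply that lemma to $\uw[h]=\uhu-\uu[h]$, which reduces the problem to bounding the \emph{consistency error} linear functional $\uv[h]\mapsto\mathrm{a}_{\diff,\vel,\reac,h}(\uhu,\uv[h])-(f,v_h)$ uniformly over $\uv[h]$ with $\norm[\sharp,h]{\uv[h]}\le 1$: indeed, using the discrete problem~\eqref{eq:advdiffreac:discrete}, $\mathrm{a}_{\diff,\vel,\reac,h}(\uhu-\uu[h],\uv[h])=\mathrm{a}_{\diff,\vel,\reac,h}(\uhu,\uv[h])-(f,v_h)$. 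Since $-\DIV(\diff\GRAD u+\vel u)+\reac u=f$ (and $\DIV\vel\equiv 0$, so the advective part of the PDE residual tested against $v_h$ can be symmetrized), the task is to show that $\mathrm{a}_{\diff,\vel,\reac,h}(\uhu,\uv[h])$ agrees with the appropriate weak pairing of the exact data against $\uv[h]$ up to the stated $h$-powers.

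Next I would split the consistency error into a diffusive part and an advective-reactive part and treat each separately, exactly as the bilinear form is built. For the diffusive part I would reuse the consistency machinery already developed for the pure Poisson problem: the conformity-error bound~\eqref{eq:consistency.h}, the commuting/approximation property~\eqref{eq:pT:commuting}--\eqref{eq:approx.approx.trace} of $\pT\circ\IT=\eproj[T]{k+1}$, the stabilization consistency bound~\eqref{eq:approx.sT}, and, for the Nitsche boundary terms in~\eqref{eq:avuh}, the trace approximation estimate~\eqref{eq:approx.trace} applied to $u-\eproj[T_F]{k+1}u$ on each $F\in\Fhb$ (recalling $u_F=\lproj[F]{k}u=0$ there since $u\in H_0^1(\Omega)$). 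This yields a contribution of order $h_T^{k+1}$ weighted by $\diff[T]^{\nicefrac12}\norm[H^{k+2}(T)]{u}$ per element. For the advective-reactive part, I would use the integration-by-parts characterization of $\Gvel{T}$ to compare $\Gvel{T}\IT u$ with $\vel\SCAL\GRAD u$ (a commuting-type relation projecting onto $\Poly{k}(T)$), estimate the resulting consistency defect by~\eqref{eq:approx} (with the $\tauref^{-1/2}\norm[H^{k+1}(T)]{u}h_T^{k+1}$ scaling coming from the reaction term and the Lipschitz bound ${\rm L}_{\beta,T}$ on $\vel$), and handle the upwind stabilization $\mathrm{s}_{\vel,T}(\IT u,\uv)$ via Cauchy--Schwarz together with~\eqref{eq:approx.trace}: each face factor $\norm[F]{|\vel\SCAL\normal_{TF}|^{1/2}(\lproj[F]{k}u-\lproj[T]{k}u)}$ is controlled by $\norm[L^\infty(F)]{\vel\SCAL\normal_{TF}}^{1/2}h_T^{k+1/2}\norm[H^{k+1}(T)]{u}$, giving the pure-advection rate $h_T^{k+1/2}$ weighted by $\velref^{1/2}\norm[H^{k+1}(T)]{u}$.

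The interpolation between the two regimes — obtaining the $\min(1,\Pe_T)$ factor rather than merely the worst of the two rates — is where I expect the main technical work. The point is that the \emph{same} face term $\norm[F]{|\vel\SCAL\normal_{TF}|^{1/2}(\cdots)}$ can be bounded in two ways: crudely by $\velref^{1/2}h_T^{k+1/2}\norm[H^{k+1}(T)]{u}$ (advection bound), or, when diffusion is present, by first extracting a factor $(\diff[F]/h_F)^{1/2}$ from the $\sharp$-norm test function's diffusive contribution and using $\norm[L^\infty(F)]{\vel\SCAL\normal_{TF}}h_F/\diff[F]=\Pe_T$-type ratios to convert it into $\diff[T]^{1/2}h_T^{k+1/2}\,\Pe_T^{1/2}\cdots\lesssim \velref^{1/2}\Pe_T^{1/2}h_T^{k}\cdots$-scaled diffusion terms, which are already accounted for. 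Taking the better of the two bounds term-by-term and assembling over $F\in\Fh[T]$ and $T\in\Th$ (using $\card\Fh[T]\lesssim 1$, the mesh-regularity comparisons~\eqref{eq:hT_hF}, and condition~\eqref{eq:stability.cond} to absorb reaction- and Lipschitz-induced lower-order terms into $\velref$) produces precisely the sum inside the braces. A final Cauchy--Schwarz in the $\ell^2$ sum over elements, followed by dividing through by $\min_{T\in\Th}(1,\tauref\reac_0)$ from the inf-sup constant, gives the claimed estimate. The delicate accounting is making sure that every term that is ``charged'' to diffusion in the $\Pe_T$-interpolation step has in fact already been bounded on the diffusive side, so that no contribution is double-counted or left over.
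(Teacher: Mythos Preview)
The paper does not actually contain a proof of Theorem~\ref{thm:advdiffreac:en.err.est}: the result is stated inside a gray box and immediately followed by Remark~\ref{rem:locally.degenerate} and the numerical section, with the argument deferred to the original reference~\cite{Di-Pietro.Droniou.ea:15}. So there is no ``paper's own proof'' to compare against in a line-by-line sense.

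That said, your outline is the standard Strang-type argument used in~\cite{Di-Pietro.Droniou.ea:15} and is essentially correct: invoke the inf-sup bound of Lemma~\ref{lem:advdiffreac:stab} on $\uhu-\uu[h]$, use~\eqref{eq:advdiffreac:discrete} to reduce to a consistency error, and split the latter into diffusive and advective-reactive contributions. Your treatment of the diffusive part (reusing~\eqref{eq:consistency.h},~\eqref{eq:approx.sT}, and~\eqref{eq:approx.trace} for the Nitsche boundary terms) and of the reactive and advective-derivative consistency (commuting property of $\Gvel{T}$ plus~\eqref{eq:approx}) is right. The mechanism you describe for obtaining the $\min(1,\Pe_T)$ factor is also the correct one: the upwind face residual $(|\vel\SCAL\normal_{TF}|(\lproj[F]{k}u-\lproj[T]{k}u),v_F-v_T)_F$ is estimated by Cauchy--Schwarz and the test factor $\norm[F]{|\vel\SCAL\normal_{TF}|^{\nicefrac12}(v_F-v_T)}$ is absorbed \emph{either} into the advective part of $\norm[\sharp,h]{\uv[h]}$ (yielding the $\velref h_T^{2k+1}$ contribution) \emph{or}, after writing $|\vel\SCAL\normal_{TF}|\le(\diff[F]/h_F)\Pe_T$, into the diffusive part $\diff[T]h_F^{-1}\norm[F]{v_F-v_T}^2$ of $\norm[\diff,h]{\uv[h]}^2$ (yielding the $\Pe_T\velref h_T^{2k+1}$ contribution); taking the smaller of the two gives the stated factor. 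Your caveat about ``double counting'' is unnecessary: both alternatives charge the test function against pieces of the \emph{same} $\norm[\sharp,h]{\uv[h]}$, so one simply picks, element by element, whichever bound is sharper.
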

\end{svgraybox}

\begin{remark}[Extension to locally vanishing diffusion]\label{rem:locally.degenerate}
  It has been showed in~\cite{Di-Pietro.Droniou.ea:15} that the error estimate of Theorem~\ref{thm:advdiffreac:en.err.est} extends to locally vanishing diffusion provided that we conventionally set $\Pe_T=+\infty$ for any element $T\in\Th$ such that $\diff[F]=0$ for some $F\in\Fh[T]$.
\end{remark}

\subsection{Numerical example}\label{sec:adr:numerical.example}
To illustrate the performance of the HHO method, we solve in the unit square $\Omega=(0,1)^2$ the Dirichlet problem corresponding to the solution~\eqref{eq:poisson:num:2d:ex.sol} with $\vel(\vec{x})= (\nicefrac12-x_2,x_1-\nicefrac12)$, $\mu\equiv 1$, and a uniform diffusion coefficient $\diff$ taking values in $\{1,\pgfmathprintnumber{1e-3},0\}$.
We take triangular and predominantly hexagonal meshes, as depicted in Figures~\ref{fig:meshes:triangular} and~\ref{fig:meshes:polygonal} respectively.
\begin{figure}\centering
  \ref{conv.legend.adr.2d}
  \vspace{0.5cm}\\
  \begin{minipage}[b]{0.32\linewidth}\centering
    \begin{tikzpicture}[scale=0.45]
      \begin{loglogaxis}[ legend columns=-1, legend to name=conv.legend.adr.2d ]       
        \addplot table[x=meshsize,y=err_sharp] {adho_1_0_mesh1.dat};
        \addplot table[x=meshsize,y=err_sharp] {adho_1_1_mesh1.dat};
        \addplot table[x=meshsize,y=err_sharp] {adho_1_2_mesh1.dat};
        \addplot table[x=meshsize,y=err_sharp] {adho_1_3_mesh1.dat};
        \logLogSlopeTriangle{0.90}{0.4}{0.1}{1}{black};
        \logLogSlopeTriangle{0.90}{0.4}{0.1}{2}{black};
        \logLogSlopeTriangle{0.90}{0.4}{0.1}{3}{black};
        \logLogSlopeTriangle{0.90}{0.4}{0.1}{4}{black};
        \legend{$k=0$,$k=1$,$k=2$,$k=3$};
      \end{loglogaxis}
    \end{tikzpicture}
    \subcaption{$\kappa=1$, triangular}
  \end{minipage}
  \begin{minipage}[b]{0.32\linewidth}\centering
    \begin{tikzpicture}[scale=0.45]
      \begin{loglogaxis}
        \addplot table[x=meshsize,y=err_sharp] {adho_0.001_0_mesh1.dat};
        \addplot table[x=meshsize,y=err_sharp] {adho_0.001_1_mesh1.dat};
        \addplot table[x=meshsize,y=err_sharp] {adho_0.001_2_mesh1.dat};
        \addplot table[x=meshsize,y=err_sharp] {adho_0.001_3_mesh1.dat};
        \logLogSlopeTriangle{0.90}{0.4}{0.1}{1}{black};
        \logLogSlopeTriangle{0.90}{0.4}{0.1}{2}{black};
        \logLogSlopeTriangle{0.90}{0.4}{0.1}{3}{black};
        \logLogSlopeTriangle{0.90}{0.4}{0.1}{4}{black};
      \end{loglogaxis}
    \end{tikzpicture}
    \subcaption{$\kappa=\pgfmathprintnumber{1e-3}$, triangular}
  \end{minipage}
  \begin{minipage}[b]{0.32\linewidth}\centering
    \begin{tikzpicture}[scale=0.45]
      \begin{loglogaxis}
        \addplot table[x=meshsize,y=err_sharp] {adho_0_0_mesh1.dat};
        \addplot table[x=meshsize,y=err_sharp] {adho_0_1_mesh1.dat};
        \addplot table[x=meshsize,y=err_sharp] {adho_0_2_mesh1.dat};
        \addplot table[x=meshsize,y=err_sharp] {adho_0_3_mesh1.dat};
        \logLogSlopeTriangle{0.90}{0.4}{0.1}{1/2}{black};
        \logLogSlopeTriangle{0.90}{0.4}{0.1}{3/2}{black};
        \logLogSlopeTriangle{0.90}{0.4}{0.1}{5/2}{black};
        \logLogSlopeTriangle{0.90}{0.4}{0.1}{7/2}{black};
      \end{loglogaxis}
    \end{tikzpicture}
    \subcaption{$\kappa=0$, triangular}
  \end{minipage}
  \vspace{0.5cm}\\
  \begin{minipage}[b]{0.32\linewidth}\centering
    \begin{tikzpicture}[scale=0.45]
      \begin{loglogaxis}
        \addplot table[x=meshsize,y=err_sharp] {adho_1_0_pi6_tiltedhexagonal.dat};
        \addplot table[x=meshsize,y=err_sharp] {adho_1_1_pi6_tiltedhexagonal.dat};
        \addplot table[x=meshsize,y=err_sharp] {adho_1_2_pi6_tiltedhexagonal.dat};
        \addplot table[x=meshsize,y=err_sharp] {adho_1_3_pi6_tiltedhexagonal.dat};
        \logLogSlopeTriangle{0.90}{0.4}{0.1}{1}{black};
        \logLogSlopeTriangle{0.90}{0.4}{0.1}{2}{black};
        \logLogSlopeTriangle{0.90}{0.4}{0.1}{3}{black};
        \logLogSlopeTriangle{0.90}{0.4}{0.1}{4}{black};
      \end{loglogaxis}
    \end{tikzpicture}
    \subcaption{$\kappa=1$, polygonal}
  \end{minipage}
  \begin{minipage}[b]{0.32\linewidth}\centering
    \begin{tikzpicture}[scale=0.45]
      \begin{loglogaxis}
        \addplot table[x=meshsize,y=err_sharp] {adho_0.001_0_pi6_tiltedhexagonal.dat};
        \addplot table[x=meshsize,y=err_sharp] {adho_0.001_1_pi6_tiltedhexagonal.dat};
        \addplot table[x=meshsize,y=err_sharp] {adho_0.001_2_pi6_tiltedhexagonal.dat};
        \addplot table[x=meshsize,y=err_sharp] {adho_0.001_3_pi6_tiltedhexagonal.dat};
        \logLogSlopeTriangle{0.90}{0.4}{0.1}{1}{black};
        \logLogSlopeTriangle{0.90}{0.4}{0.1}{2}{black};
        \logLogSlopeTriangle{0.90}{0.4}{0.1}{3}{black};
        \logLogSlopeTriangle{0.90}{0.4}{0.1}{4}{black};
      \end{loglogaxis}
    \end{tikzpicture}
    \subcaption{$\kappa=\pgfmathprintnumber{1e-3}$, polygonal}
  \end{minipage}
  \begin{minipage}[b]{0.32\linewidth}\centering
    \begin{tikzpicture}[scale=0.45]
      \begin{loglogaxis}
        \addplot table[x=meshsize,y=err_sharp] {adho_0_0_pi6_tiltedhexagonal.dat};
        \addplot table[x=meshsize,y=err_sharp] {adho_0_1_pi6_tiltedhexagonal.dat};
        \addplot table[x=meshsize,y=err_sharp] {adho_0_2_pi6_tiltedhexagonal.dat};
        \addplot table[x=meshsize,y=err_sharp] {adho_0_3_pi6_tiltedhexagonal.dat};
        \logLogSlopeTriangle{0.90}{0.4}{0.1}{1/2}{black};
        \logLogSlopeTriangle{0.90}{0.4}{0.1}{3/2}{black};
        \logLogSlopeTriangle{0.90}{0.4}{0.1}{5/2}{black};
        \logLogSlopeTriangle{0.90}{0.4}{0.1}{7/2}{black};
      \end{loglogaxis}
    \end{tikzpicture}
    \subcaption{$\kappa=0$, polygonal}
  \end{minipage}    
  \caption{$\norm[\sharp,h]{\Ih u - \uu[h]}$ vs. $h$ for the test case of Section~\ref{sec:adr:numerical.example}.\label{fig:convergence:adr}}
\end{figure}
The convergence results are depicted in Figure~\ref{sec:adr:numerical.example}. We observe that the convergence rate decreases with $\diff$, with a loss slightly less than the half order predicted by the error estimate of Theorem~\ref{thm:advdiffreac:en.err.est}.


\begin{acknowledgement}
  This work was funded by Agence Nationale de la Recherche grant HHOMM (ref. ANR-15-CE40-0005-01).
\end{acknowledgement}
\vspace{-1cm}


\begin{footnotesize}
  \bibliographystyle{plain}
  \bibliography{hho_sema-simai}
\end{footnotesize}

\end{document}